\documentclass[onefignum,onetabnum]{siamart250211}

\usepackage{graphicx,epsfig,xcolor,lineno}
\usepackage{amsmath}
\usepackage{stmaryrd}
\usepackage{amsfonts}
\usepackage{xcolor}
\usepackage[left=3cm,right=3cm,top=4cm,bottom=4cm]{geometry}


\ifpdf
\hypersetup{
	pdftitle={PWB},
	pdfauthor={J. Coyle and N. Nigam}
}
\fi

\usepackage{lipsum}
\usepackage{amsfonts}
\usepackage{graphicx}
\usepackage{epstopdf}
\usepackage{algorithmic}
\ifpdf
\DeclareGraphicsExtensions{.eps,.pdf,.png,.jpg}
\else
\DeclareGraphicsExtensions{.eps}
\fi


\newsiamremark{remark}{Remark}
\newsiamremark{hypothesis}{Hypothesis}
\crefname{hypothesis}{Hypothesis}{Hypotheses}
\newsiamthm{claim}{Claim}
\newsiamremark{fact}{Fact}
\crefname{fact}{Fact}{Facts}

\headers{Preconditioning PWB}{J. Coyle and N. Nigam}

\title{The whys and hows of conditioning of DG plane wave Trefftz methods:  a single element. \thanks{Submitted to the editors DATE.
		\funding{NN's research is supported by the Discovery Grants program of the Natural Sciences and Engineering Research Council of Canada.}}}


\author{%
	{	Joseph Coyle\thanks{ Department of Mathematics, Monmouth University, New Jersey, USA. \email{jcoyle@monmouth.edu}:}}
	\and {
		Nilima Nigam\thanks{Department of Mathematics, Simon Fraser University, Burnaby, Canada. \email{nigam@math.sfu.ca}}
}}

\usepackage{amsopn}

\newcommand{\e}{\rm{ e}}
\renewcommand{\d}{\mathbf{d}}
\newcommand{\x}{\mathbf{x}}
\newcommand{\M}{\mathtt{M}}
\newcommand{\D}{\mathtt{D}}
\renewcommand{\S}{\mathtt{S}}
\newcommand{\SyS}{\mathtt{SyS}}
\renewcommand{\v}{\mathbf{v}}
\newcommand{\U}{\mathtt{U}}
\newcommand{\C}{\mathtt{C}}
\newcommand{\T}{\mathtt{T}}
\renewcommand{\P}{\mathtt{P}}
\newcommand{\BM}[1]{\mbox{\boldmath{$#1$}}}
\newcommand{\pbm}{{\BM p} }
\newcommand{\nbm}{{\BM n} }
\newcommand*{\ldblbrace}{\{\mskip-5mu\{}
\newcommand*{\rdblbrace}{\}\mskip-5mu\}}
\usepackage{xspace}
\newcommand{\MATLAB}{\textsc{Matlab}\xspace}

\begin{document}
\maketitle

\begin{abstract}
	{Plane-wave Trefftz methods (PWB) for the Helmholtz equation offer significant advantages over standard discretization approaches whose implementation employs more general polynomial basis functions.  A disadvantage of these methods is the  poor conditioning of the system matrices. In the present paper, we carefully examine the conditioning of the plane-wave discontinuous Galerkin method with reference to a single element. The properties of the mass and stiffness matrices depend on the size and geometry of the element. We study the mass and system matrices arising from a PWB on a single disk-shaped element. We then examine some preconditioning strategies, and present results showing their behaviour with three different criteria: conditioning, the behaviour of GMRES residuals, and impact on the $L^2$-error.}
\end{abstract}

\begin{keywords}
	Trefftz methods, plane wave basis, preconditioning
\end{keywords}

\begin{MSCcodes}
	65F08,65N30, 65N22
\end{MSCcodes}

\section{Introduction}
Finite element approaches based on Trefftz methods \--- where the approximation basis is built from solutions of the differential operator being studied  \--- are well-established. They offer many advantages including flexibility in terms of tesselating domains, their theoretical convergence rates and potential for dimension reduction. Unfortunately, it has also been observed that the Trefftz methods for the Helmholtz operator lead to severely ill-conditioned matrices. The theoretically-predicted rates of convergence may not be observed in practice. It is our goal in this paper to study these issues within the context of a {\it single} spatial element.

We recall from \cite{trefftzreview}
that the  Trefftz basis functions are plane waves of the form 
\begin{equation}\label{eq:phi}
	\varphi_\ell:=\e^{\iota \kappa \d_\ell\cdot(\x-\x_{K})}
\end{equation}
where $\d_\ell\in \mathbb{R}^n$ are distinct propagation directions, $\kappa$ is the (real) wave number, and $\x_{K}$ is a point in the interior of the physical element $K$({\it the center}). The plane-wave basis functions (PWB) associated with distinct directions  form a linearly independent set. The local space is constructed as
\begin{equation}
	V_{p}(K)=\left  \{\v:\v(\x) = \sum_{\ell=1}^{p} \alpha_\ell {\e}^{\iota \kappa \d_\ell\cdot(\x-\x_{K})}, \quad \alpha_\ell \in \mathbb{C} \right \}.
\end{equation}
It is well-documented, see for example \cite{trefftzreview}, that as the number of plane wave directions $p$ increases, the conditioning of the system matrix resulting from  variational methods using PWB degenerates. 

A standard Helmholtz boundary value problem is 
\begin{eqnarray} 
	\Delta u + \kappa^2 u &=&0, \quad x\in \Omega, \label{eq:helmeq} \\
	\frac{\partial u }{\partial \nbm } + i \kappa u &=& g,\quad x\in \partial \Omega, \label{eq:helmbc}
\end{eqnarray}
where $\Omega$ is a bounded Lipschitz domain in ${\mathbb R}^2$ and $\nbm$ is  the unit outward normal to the boundary of $\Omega,$ denoted by $\partial \Omega.$
A typical  discrete formulation in ${\mathbb R}^2$ \cite{trefftzreview} would be to determine 
$\displaystyle u_p \in {V}_p ({\cal T}) =  \bigcup_{K \in {\cal T}} V_p(K)$
such that  ${\cal A}(u_p,v_p) = \ell(v_p)$ for all $v_p \in V_p({\cal T}),$ where
\begin{eqnarray}
	{\cal A}(u_p,v_p) &=& \int_{{\cal E}^{I}} \ldblbrace u_p \rdblbrace \llbracket {\overline{ \nabla v_p}} \rrbracket_N ds  + 
	i \kappa^{-1} \int_{{\cal E}^{I}} \beta \llbracket \nabla u_p \rrbracket_N \llbracket {\overline{\nabla v}} \rrbracket_N ds \nonumber \\
	&-& \int_{{\cal E}^{I}} \ldblbrace \nabla_h u_p \rdblbrace \cdot \llbracket {\overline{  v_p}} \rrbracket_N ds  + 
	i \kappa \int_{{\cal E}^{I}} \alpha \llbracket  u_p \rrbracket_N  \cdot \llbracket {\overline{ v_p}} \rrbracket_N ds \nonumber \\
	&+& \int_{{\cal E}^{B}} (1-\delta) u_p \, \overline{ \nabla v_p \cdot {\BM n}} \, ds  + 
	i \kappa^{-1} \int_{{\cal E}^{B}} \delta \, \nabla u_p \cdot {\BM n} \,  {\overline{\nabla v_p \cdot {\BM n}}}ds \nonumber \\
	&-& \int_{{\cal E}^{B}} \delta \, \nabla u_p \cdot {\BM n} \, \overline{  v_p}  ds  + 
	i \kappa \int_{{\cal E}^{B}} (1-\delta) u_p \overline{ v_p} ds, \label{eq_A} 
\end{eqnarray}
\begin{equation}\label{eq_ell} 
	\ell(v_p) = i \omega^{-1} \int_{{\cal E}^{B}} \delta g \overline{ \nabla v_p \cdot {\BM n}} dS + 
	\int_{{\cal E}^{B}} (1-\delta) g \overline{v_p} dS,
\end{equation}
where
\begin{equation}\label{eq_g} 
	g  = \nabla u_p \cdot \nbm + i \kappa u_p \mbox{ on } \partial \Omega
\end{equation}
and ${\cal T}$ is a regular mesh, designating the   interior and boundary edges as ${\cal E}^{I}$ and ${\cal E}^{B},$ respectively.  The derivation is based on an initial formulation over a single physical element $K \in {\cal T}$ followed by defining appropriate fluxes on the boundary by employing well-defined jumps $\llbracket \cdot \rrbracket_N$ and averages $\ldblbrace \cdot \rdblbrace$ while summing over the physical elements in ${\cal T}.$ The choice of the parameter $\delta \in (0,\frac{1}{2}]$ yields different approximation properties, \cite{trefftzreview}.

Consider the example where we set $\d = \langle \cos(\pi/4),\sin(\pi/4) \rangle$, $ \pbm_0= (2,-4)$, and define
\begin{equation}
	u(\x) = \e^{\iota \kappa ( \x \cdot\d)} + 4J_0(\kappa |\x_m - \pbm_0|)
\end{equation}
where $J_0$ is the Bessel function of the first kind, order zero and in both (\ref{eq_A}) and (\ref{eq_ell}) we set $\alpha = \beta = \delta = 1/2$.  The domain of interest is taken to be the square shown, with mesh, in Figure \ref{fig1}(a).  We discretize (\ref{eq_A}) and (\ref{eq_ell}) and compute the solution for $p$ evenly spaced plane waves on each element for $p = 4,...,50$ and $\kappa = \pi,5\pi,$ and $10\pi.$ The real part of the computed solution for $\kappa = 5 \pi$ and $p = 26$ is shown in Figure \ref{fig1}(b).  For each $p$-value, we compute both the condition number of the system matrix, Figure  \ref{fig1}(c), and the $L^2(\Omega)$ relative error of the solution, Figure \ref{fig1} (d).  One can see the typical degradation of the condition number as the number of plane waves $p$ increases and the corresponding error in each case.  
\begin{figure}\label{fig1}
	\centering
	$\begin{array}{cc}
		\includegraphics[width=0.47\linewidth]{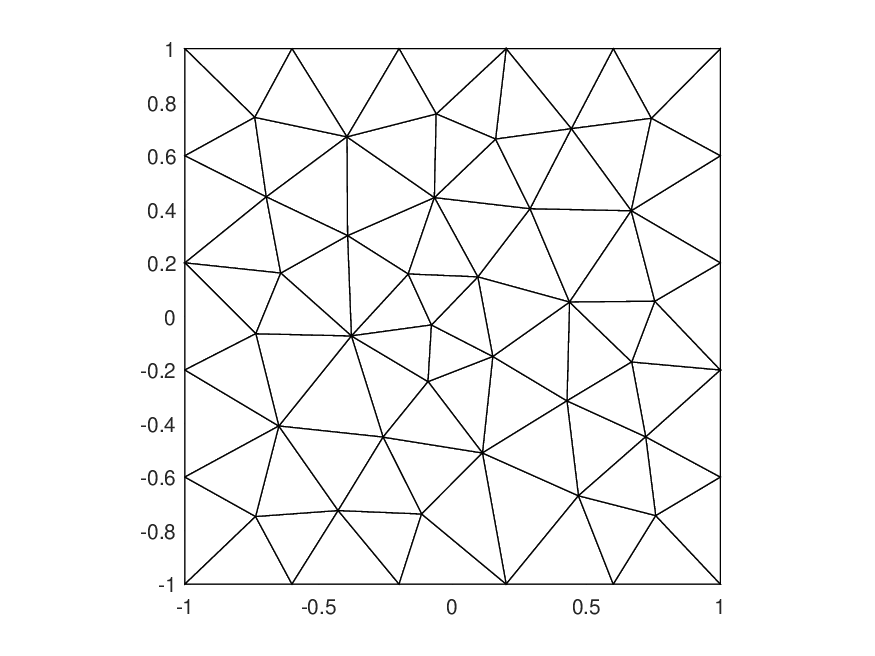} &
		\includegraphics[width=0.47\linewidth]{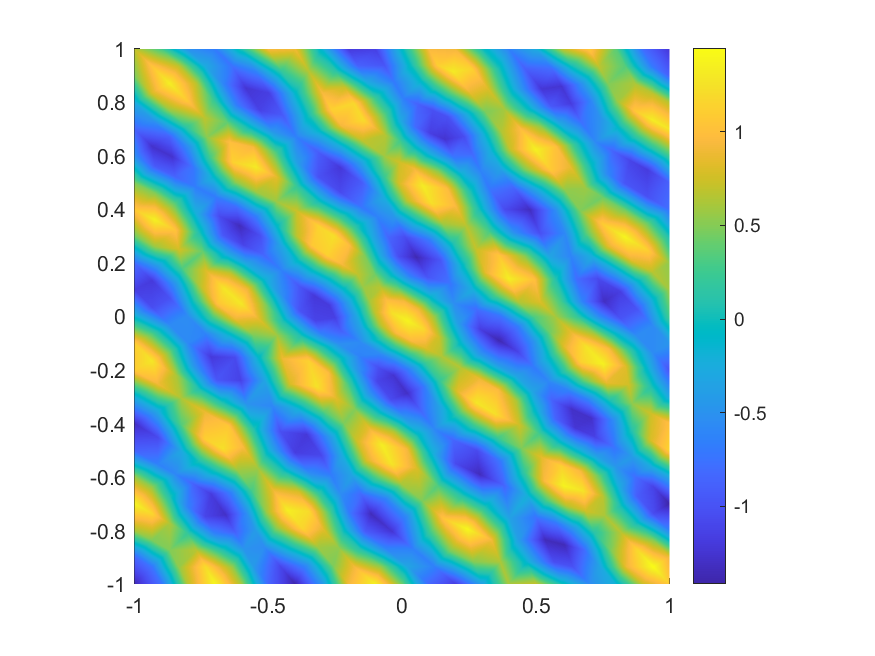} \\
		\mbox{(a) Domain and Mesh} & \mbox{(b) $Re(u_p)
			$, $p = 26$ and $\kappa = 5 \pi$} \\
		\includegraphics[width=0.45\linewidth]{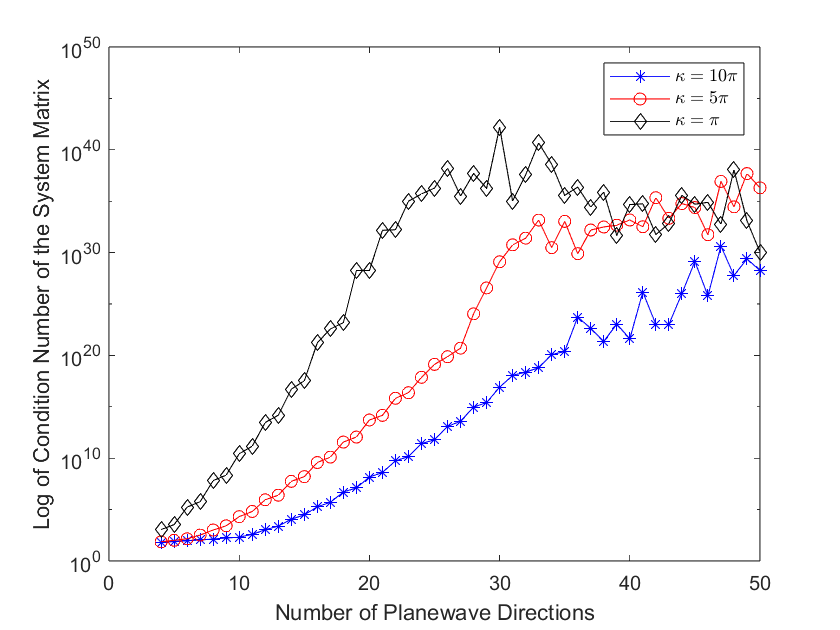} &
		\includegraphics[width=0.45\linewidth]{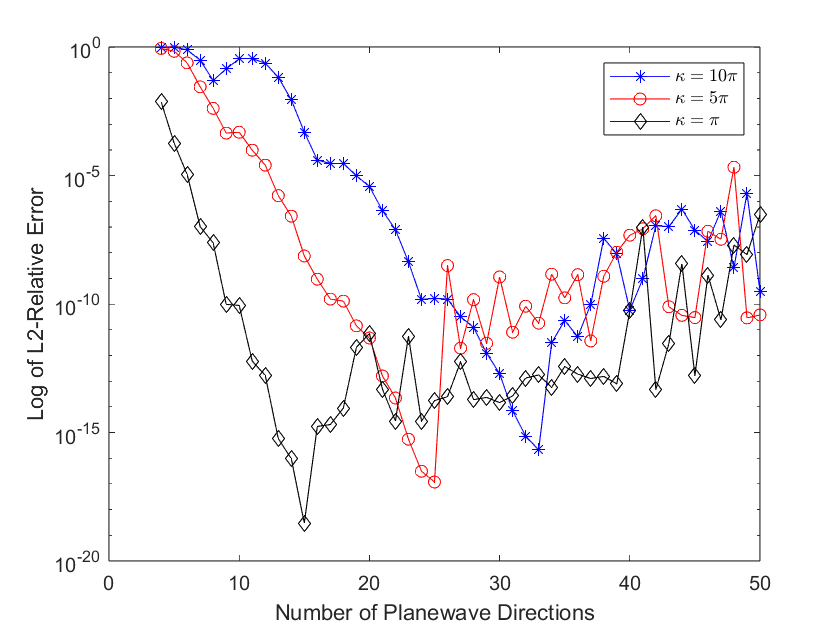} \, \\
		\mbox{(c) Condition Number} & \mbox{(d) $L^2$ Relative Error}
	\end{array}$
	\caption{The mesh of the domain $[-1,1]^2$ is shown in (a) and in (b) we see the real part of the computed solution for $p= 26$ and $\kappa = 5\pi.$  The condition number (c) and $L^2$-relative error (d) are given versus $p$ for $\kappa$ values of $\pi, 5 \pi,$ and $10\pi.$ }
\end{figure}

Clearly the issues of poor conditioning seen in Figure \ref{fig1} are related to the local condition number on a single element. In this paper we therefore focus on  questions around conditioning and the impact of preconditioning choices for the system matrix associated with solving the PDE in the specific case where $\Omega$ is a {\it single} element, $ K$. A successful preconditioner on a single element can then be used as part of a block-preconditioning strategy when computing on a mesh with many elements.

   Of particular interest is the conditioning of 3 common matrices: the mass, cross, and stiffness matrices. In particular, the entries of the local matrices on an element $K$ are 
\begin{eqnarray}
	\text{(mass matrix)}\, \, [\M]_{m,\ell}&:=&\int_{\partial K} \varphi_m\overline{\varphi_{\ell}} ds, \label{eq:mass} \\
	\text{(cross matrix)}\, \, [\S]_{m,\ell} &:=&\int_{\partial K} \nabla\varphi_m \cdot \nbm \, \overline{\varphi_{\ell}} ds, \label{eq:cross} 
\end{eqnarray}
and
\begin{eqnarray}
	\text{(stiffness matrix)}\, \, [\D]_{m,\ell}&:=&\int_{\partial K} (\nabla \varphi_m\cdot \nbm)(\overline{\nabla \varphi_{\ell}\cdot \nbm}) ds \label{eq:stiff}
\end{eqnarray}
where $\partial K$ is the boundary of the physical element and $\nbm$ is the unit outward normal.  Note that $\M$ and $\D$ are Hermitian.  In this case, we no longer have interior edges to consider and ${\cal E}^I$ is empty and the resulting system \eqref{eq_A} we solve is
\begin{equation}\label{eq:systemlinear} \mathtt{SyS}\, {\tt x}:= (\kappa^2  \M +  i\kappa (\S-\S^*) + \D){\tt x} = {\tt f}
\end{equation}
where $\S^*$ denotes the complex conjugate transpose of $\S$ and ${\tt f}$ is constructed from (\ref{eq_ell}).

Our goal in this paper is to provide an analysis of the conditioning of these local matrices relative to  the mesh size $h$, the wave number $\kappa$, or the number of directions $p$ on a single element $K$.
To motivate this, we observe that if we employ a plane-wave basis, then we do not have the same notion of a {\it reference} element as we typically do for polynomial basis functions. That is to say, defining a local basis on a physical element by way of affine transformation from a reference element is not possible. 
However, we observe that the {\it size} of the physical element has been observed to have an important effect on the conditioning of the matrices, even though  one of the advantages of the PWB method is that the {\it shape} of the physical element can be quite general. Consequently, we begin our presentation with a simple shape for $K,$ the disk of radius $h.$  As we shall see going forward, the conditioning also depends in a slightly less intuitive way on the geometry of the element.
The related analysis leads to a natural progression first to regular, then cyclic, and finally general polygons. We provide some numerical experiments which highlight the theoretical results, and then suggest some preconditioning strategies.

We point out some previous works which are of particular relevance to the scope of this paper.
In \cite{CongreveMGS}, the authors propose a preconditioning strategy based on orthogonalization via the modified Gram-Schmidt method, and how such a strategy would improve the performance of a GMRES-based solution strategy. SVD-based and QR-based approaches for plane-wave Trefftz methods are studied carefully in \cite{Barucq}. In parallel, the elegant theory of frames and overcomplete bases \cite{adcock1,adcock2} has been used to investigate the {\it inherent} instability of using plane wave methods; in a very recent paper by \cite{Parolin}, the use of evanescent plane waves is shown to provide stable approximation for the Helmholtz equation in a disk. Our approach in this paper is somewhat different: we first study the behaviour of important matrices arising in the DG-plane wave discretization of the Helmholtz equation on a single element, as in (say) \cite{Parolin} or \cite{Perrey}. Section 3 of the present paper is closely linked to results in Section 2.2 of \cite{Perrey}, where the conditioning of the mass matrix on a disk is discussed. We also study the eigenvalues and conditioning on the so-called cross and stiffness matrices on the disk there. We next study these matrices on {\it cyclic polygons}. On polygonal domains $K$  these matrices are not even Toeplitz. However, we show they are {\it close} to Toeplitz, and therefore conjecture preconditioning strategies based on circulant preconditioners may be effective.

 There is a considerable literature on preconditioning Toeplitz matrices (see, e.g. \cite{strang,chan, HANKE1998137,Gray,DIBENEDETTO199335,potts}) as well as nearly Toeplitz matrices (see, e.g. \cite{ku}). Based on these ideas a number of possible circulant preconditioners suggest themselves, and we compare their impact on improving the conditioning of the relevant matrices. We also assess their behaviour as part of both a direct solve and a GMRES computation.

In this paper, we address the following points:
\begin{itemize}
	\item Is there a precise characterization of the spectrum and condition numbers of the mass, stiffness, and cross matrices on a disk? How do these depend on the wave number $\kappa$, the element radius $h,$ and the number of plane waves $p$? (Section 3)
	\item How is the spectrum of the system matrix $\SyS$ on a regular/cylic polygon related to that on a disk? (Section 4)
	\item Can simple (e.g. circulant) preconditioners improve the conditioning of the system matrix $\SyS$ on a single element? (Section 5)
	\item Which of these preconditioners improves accuracy for the iterative solution of (\ref{eq:systemlinear}) via GMRES? (Section 6) 
\end{itemize}
In subsequent work, we investigate preconditioning strategies for meshes comprised of several elements.
\section{Notation and background}
We first set notation, and review some results which will be used frequently.  Throughout we consider $p$ plane wave directions of unit length given by  
\begin{equation}\label{eq:directions}
	\d_m:= \langle \cos(\theta_m), \sin(\theta_m)  \rangle; \qquad \theta_m := \frac{2\pi}{p}m, \quad m = 0,...,p-1
\end{equation}
and a point $\x(t)$ in the ${\mathbb R}^2 $ plane is represented as
\begin{equation}\label{eq:point}
	\x(t) = |\x(t)|\left \langle \cos(t), \sin(t)\right \rangle.
\end{equation}
Noting that 
\begin{eqnarray}
	\x(t) \cdot\d_m &=& |\x(t)| (\cos (t) \cos(\theta_m) + \sin (t) \sin(\theta_m))  = |\x(t)| \cos( \theta_m -t), \nonumber
\end{eqnarray}
we can then write
\begin{eqnarray}
	\x(t)\cdot(\d_m-\d_\ell) 
	&=&  2|\x(t)|\sin \left (\frac{\theta_m - \theta_{\ell}}{2} \right )\sin \left ( t-  \frac{\theta_m + \theta_{\ell}}{2}\right ).   \label{eq:xdotdm}
\end{eqnarray} 
Defining
\begin{equation}\label{eq:defineab}
	a(n):=2 \sin \left (\frac{\theta_n}{2} \right ) \qquad \mbox{ and } \qquad b(n,t):= \sin\left (t - \frac{\theta_n}{2} \right ),\end{equation}
leads to 
\begin{equation}\label{eq:prop1}
	\x(t)\cdot(\d_m-\d_\ell) = |\x(t)| a(m-\ell) b(m+\ell,t).
\end{equation}
Using properties of the sine function, we get
\begin{eqnarray}
	a(m-\ell) = -a(\ell-m), &\quad
	a(x\pm p) = -a(x), \label{eq:amodp2} 
\end{eqnarray}
and
\begin{eqnarray}
	b(x\pm p,t) &=& - b(x,t). \label{eq:amodp3}
\end{eqnarray}
We will also take advantage of the fact that if $\nbm =\langle \cos(t), \sin(t) \rangle$ is a unit vector then
\begin{eqnarray}
	\nabla \varphi_m \cdot \nbm = i \kappa |\x| \varphi_m (\d_m \cdot \nbm)
	= i \kappa h\varphi_m \cos(\theta_m -t).\label{eq:grad}
\end{eqnarray}
\subsection{Identities involving Bessel functions}
The Bessel function of the first kind and of order $n$, $J_n(x)$, will appear frequently in the subsequent sections. We record some important facts which can be found in, for example, \cite{Lebedev,Watson} , or readily derived:
\begin{itemize}
	\item The Fourier expansion of the function $e^{i x \sin(t)}$ can be expressed via the Jacobi-Anger expansion:
	$$ e^{i x\sin(t)} = \sum_{n=-\infty}^{\infty} J_n(x)e^{in t}.$$
	Consequently, the Bessel functions of the first kind $J_n(t)$ are the Fourier coefficients of a periodic and analytic function, and must decay rapidly with $n$.
	
	For integer $n$ and $\alpha \in {\mathbb R},$ the Jacobi-Anger expansion provides an identity we use frequently:
	\begin{eqnarray}
		J_n(x) &=&  \frac{1}{2\pi}\int_{0}^{2\pi} e^{i x \sin(t) - in t} \, dt  
		=\frac{1}{\pi}\int_{0}^{2\pi} \cos( x \sin(t) - n t) \, dt. \label{BesselIntegral} 
	\end{eqnarray}
	
	\item The Bessel functions satisfy, for integer $n$,  $J_{n}(x) = (-1)^n J_{-n}(x)$ as well as $J_{n}(x) = (-1)^nJ_n(-x)$.
	\item We shall encounter integrals of the form 
	\begin{equation}\nonumber
		\int_0^{2\pi} e^{i A \sin(t-\beta_1)} \cos(qt-\beta_2) dt
	\end{equation}
	where $A, \beta_1,\beta_2$ are fixed and $q$ is an integer. Simple manipulations yield 
	\begin{equation}
		\int_0^{2\pi} e^{i A \sin(t-\beta_1)} \cos(qt-\beta_2) dt = 
		\begin{cases}
			2\pi J_q(A)\cos(q\beta_1-\beta_2),& q \,\, \mbox{even}\\
			&  \\
			2\pi i J_q(A)\sin(q\beta_1-\beta_2), & q \,\, \mbox{odd} 		
		\end{cases}.
		\label{eq:integralconvertp}
	\end{equation}
	We also recall that for nonnegative $n$,
	\begin{equation}\label{eq:besselsmallargument}
		J_n(z) \approx \frac{1}{\Gamma(n+1)} 
		\left(\frac{z}{2}\right)^n, \quad \text{when} \,\, 0 < z \ll \sqrt{n + 1}.
	\end{equation} 

\end{itemize}
Finally, we shall need the following identity in Section 3.
\begin{lemma} \label{lemma:bessel-coeffs} Let $B>0$ be constant. For $M,L \in \mathbb{Z}$, we have
	\begin{equation}
		I_1(M,L) :=	\int_0^{2\pi} J_{2M} \left( B\sin\left ( \frac{x}{2} \right ) \right )\, e^{-i Lx}\, dx =
		 \int_0^{2\pi} J_{2L}\left(B\sin(t)\right)\, e^{-i2Mt}\, dt.
	\end{equation} 
\end{lemma}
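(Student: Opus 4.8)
The plan is to reduce the left-hand integral to the right-hand one in two stages: first I would remove the mismatch between the arguments $\sin(x/2)$ and $\sin(t)$, and then I would expose a hidden $M \leftrightarrow L$ symmetry that the Jacobi--Anger representation \eqref{BesselIntegral} makes manifest.

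First I would substitute $x = 2u$ in $I_1(M,L)$. Since $\sin(x/2) = \sin u$ and $e^{-iLx} = e^{-2iLu}$, this gives $I_1(M,L) = 2\int_0^{\pi} J_{2M}(B\sin u)\,e^{-2iLu}\,du$, but only over the half-range $[0,\pi]$. The key observation is that the integrand $u \mapsto J_{2M}(B\sin u)\,e^{-2iLu}$ is in fact $\pi$-periodic: under $u \mapsto u+\pi$ we have $\sin(u+\pi) = -\sin u$, so $J_{2M}(B\sin(u+\pi)) = J_{2M}(-B\sin u) = J_{2M}(B\sin u)$ by the parity identity $J_n(-x) = (-1)^n J_n(x)$ with $n = 2M$ even, while $e^{-2iL(u+\pi)} = e^{-2iLu}$ because $L$ is an integer. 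Hence $2\int_0^{\pi} = \int_0^{2\pi}$ and
\[
 I_1(M,L) = \int_0^{2\pi} J_{2M}(B\sin u)\,e^{-2iLu}\,du .
\]

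Second, I would expose the $M \leftrightarrow L$ symmetry using the integral representation \eqref{BesselIntegral}. Writing $J_{2M}(B\sin u) = \frac{1}{2\pi}\int_0^{2\pi} e^{\,iB\sin u\,\sin\phi - 2iM\phi}\,d\phi$ turns $I_1$ into the double integral $\frac{1}{2\pi}\int_0^{2\pi}\!\int_0^{2\pi} e^{\,iB\sin u\,\sin\phi}\,e^{-2iM\phi}\,e^{-2iLu}\,d\phi\,du$, whose kernel $e^{\,iB\sin u\,\sin\phi}$ is symmetric in $(u,\phi)$. Interchanging the order of integration (legitimate by Fubini, the integrand being bounded on a compact square) and then reading the inner $u$-integral back through \eqref{BesselIntegral} gives $\frac{1}{2\pi}\int_0^{2\pi} e^{\,iB\sin\phi\,\sin u - 2iLu}\,du = J_{2L}(B\sin\phi)$, so that
\[
 I_1(M,L) = \int_0^{2\pi} J_{2L}(B\sin\phi)\,e^{-2iM\phi}\,d\phi ,
\]
which is the claimed right-hand side after relabelling $\phi$ as $t$.

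I expect the main obstacle to be the first stage rather than the second: the substitution alone only yields an integral over $[0,\pi]$, and it is not immediately obvious that one may double it back to $[0,2\pi]$. The periodicity argument that licenses this hinges crucially on the Bessel order and the Fourier frequency both being \emph{even} (the factors $2M$ and $2L$, together with $L \in \mathbb{Z}$); were the orders odd, the parity identity would introduce a sign and the doubling would fail. Once past this point, the symmetrization via Jacobi--Anger and Fubini is routine.
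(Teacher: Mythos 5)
Your proof is correct and follows essentially the same route as the paper's: substitute $x=2u$, double the half-range integral back to $[0,2\pi]$, insert the Jacobi--Anger representation \eqref{BesselIntegral}, and exchange the order of integration to read the inner integral back as $J_{2L}$. If anything, your justification of the doubling step (via $\pi$-periodicity of the integrand, relying on the even order $2M$ and $L\in\mathbb{Z}$) is more explicit than the paper's, which asserts that step without comment.
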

\begin{proof}
	We easily see that $I_1(M,L)$ is real, by exploiting the even behaviour of $J_{2M}$. Next, 	
	for fixed integers $M, L$, we calculate, using \eqref{BesselIntegral}
	\begin{align*}
		I_1(2M,L)&=	\int_0^{2\pi} J_{2M}\left( B\sin\left ( \frac{x}{2} \right ) \right ) \, e^{i Lx}\, dx 
		= 2\int_0^{\pi}J_{2M}(B\sin(s))\, e^{i2Ls}\, ds\\
		&= 	\int_0^{2\pi}J_{2M}(B\sin(s))\, e^{i2Ls}\, ds
		=  \frac{1}{2\pi}	\int_0^{2\pi}\int_0^{2\pi} e^{i B\sin(s)\sin(t)-i2Mt -2Ls}\, dt\, ds\\
		&=\int_0^{2\pi} J_{2L}\left(B\sin(t)\right)\, e^{-i2Mt}\, dt.
	\end{align*} 
	That is, $I_1$ is the (scaled) $2M$ Fourier (cosine) coefficient of $J_{2L}(B\sin(t)).$
\end{proof}
\subsection{Properties of circulant and Toeplitz matrices}
Throughout, if $\mathtt{A}$ is an $n\times n$ matrix with $n$ eigenvalues $\lambda_{i}(\mathtt{A}), i=0,1,...,n-1$, we denote 
\begin{equation} \label{eq:notationlambda}
	\Lambda(\mathtt{A}):= \mathtt{diag}[\lambda_0(\mathtt{A}),\lambda_1(\mathtt{A}),.., \lambda_{n-1}(\mathtt{A})]. 
\end{equation} 
If the context is clear, we will suppress the argument in $\lambda_{i}(\tt A)$. 

We enumerate well-known properties of Toeplitz and circulant matrices, see for example, \cite{Gray}. Consider the $n \times n$  matrices of form 
\begin{equation*}
\T_n=\begin{bmatrix}
	t_{0}&t_{1}&t_{2}&\cdots&t_{n-1}\\
	t_{-1}&t_{0}  &t_{1}&\cdots&t_{n-2}\\ 
	t_{-2}  &t_{-1}    & t_0   &     & \vdots \\  
	\vdots& \vdots& & \ddots&t_1\\
	t_{-(n-1)} & t_{-(n-2)}&\cdots&t_{-1} &t_0 \\
\end{bmatrix}\hspace{-0.04in},  \C_n=\begin{bmatrix}
c_{0}&c_{1}&c_{2}&\cdots&c_{n-1}\\
c_{n-1}&c_{0}  &c_{1}&\cdots&c_{n-2}\\ 
c_{n-2}  &c_{n-1}    & c_0   &     & \vdots \\  
\vdots& \vdots& & \ddots&c_1\\
c_{1} & c_{2}&\cdots& c_{n-1}&c_0 \\
\end{bmatrix}. 
\end{equation*}
The matrix $\T_n$ with constant diagonals is a Toeplitz matrix, and $\C_n$ is a special form of the Toeplitz matrix in which each row is a right cyclic shift of the row above. Circulant matrices are uniquely specified by their first row and oftentimes denoted by  $\C_n = \mathtt{circ}[c_0,c_1,...,c_{n-1}].$
Multiplication by a circulant matrix is equivalent to a circular convolution.  It should be noted that an $n \times n$ matrix $\C$ is circulant if and only if 
\begin{equation}\label{iscirc}
[\C]_{m,\ell} = [\C]_{i,j} \mbox{ when } i-j = (m- \ell) \mod{n}.
\end{equation}
The eigenvalues of a circulant matrix $\C = {\rm circ}[c_0,c_1,...,c_{n-1}]$  are given by
\begin{equation}\label{eq:eigofC}
	\lambda_q(\C)=\sum_{j=0}^{n-1}c_j e^{-i j \theta_q }, \qquad q=0,1,...,n-1, \qquad \theta_q:=\frac{2\pi}{n}q, \end{equation}
and the associated eigenvectors are 
\begin{equation}\label{eq:eigenvectofC}
	v_q = [1,e^{-i \theta_q},...,e^{- i (n-1) \theta_q}]^{\sf T}, \qquad q=0,1,...,n-1.
\end{equation}
We denote by $\U$ the $n \times n$ unitary matrix whose columns are the eigenvectors $v_q$ above:
\begin{equation}\label{eq:defineU}
	\U:=[v_0|v_1|...|v_{n-1}].
\end{equation}

If $c_j =  f\left ( \theta_j \right )$ for a continuous periodic  {\it generating function} $f:[0,2\pi]\rightarrow [0,2\pi]$ then 
\begin{eqnarray}
	\lambda_q (\C) &=& \sum_{j=0}^{n-1} f(\theta_j) e^{-i q \theta_j } 
	\approx \frac{n}{2\pi} \int_0^{2\pi} f(x) e^{-i qx} \, dx, \qquad |q|\leq n-1.	
\end{eqnarray}
The sum above can be interpreted in terms of a quadrature rule, in which case the integral term defines the  Fourier transform of $f$ up to $n-1$. The quadrature is exact if $f\in {\rm span}\left \{e^{i jx}: \, |j|\leq p-1 \right \}.$
 
Several of the matrices we discuss in this paper are not only circulant, but also real. We refer elsewhere (e.g., \cite{circulantreal}) for a detailed exposition of the properties of such matrices, and record some facts we use.
 The $n \times n$ matrix ${\tt C}$ is real circulant if and only if \begin{equation}\label{eq:eigenvalue_real_circulant}
		\lambda_0(\C)\in \mathbb{R}, \quad \text{and} \quad \lambda_q(\C) = \bar{\lambda}_{q-n}(\C), \quad q=1,2,...,n-1.
	\end{equation}	
If the $p \times p$ matrix $\C$ is real, circulant and symmetric, then all the eigenvalues are real. Moreover, from \eqref{eq:eigenvalue_real_circulant} we see that several eigenvalues have multiplicity. In fact, we see \cite{circulantreal}
	\begin{itemize}
		\item If $n$ is odd, then $\lambda_0(\C)$ has odd multiplicity, and all other eigenvalues have even multiplicity
		\item If $n$ is even,  either $\C$ has precisely two eigenvalues of  odd multiplicity, or none. All others eigenvalues have even multiplicity. 
	\end{itemize}

Any circulant matrix admits the decomposition
\begin{equation}\label{eq:cdecomp}
	\C := \U^*\mathtt{\Lambda}\U
\end{equation}
where $\U$ is the unitary matrix defined in \eqref{eq:defineU}, and 
	$\mathtt{\Lambda} = \mathtt{diag}[\lambda_0, \lambda_1,...,\lambda_{n-1}]$. 
Application of circulant matrices is therefore fast (via the FFT).  Matrix inversion is also fast, 
\begin{equation}\label{eq:cinvdecomp}
	\C^{-1} = \U^*{\mathtt \Lambda^{-1}}\U.
\end{equation}


\section{The behaviour of the PWB on a disk}

To develop some understanding of the theoretical issues around the conditioning of the mass, cross, and stiffness matrices on general polygons, we first examine the situation of a single element $K$ which is a disk of radius $h$ centered at the origin. As mentioned in the Introduction, the PWB behaves differently from traditional polynomial basis methods, in that affine-mapped physical elements do not map the PWB to another one. Nonetheless, the size of the physical element plays an important role in the conditioning of the PWB. Motivated by these facts, we first examine the behaviour of the PWB on an element $K$ which is a {\it disk.} This allows us to obtain a quantitative description of the impact of size as well as wave number on an isotropic physical element. These results can then be used to provide insight into the impact of both the mesh size and the number of plane wave basis functions on the conditioning of a more general-shaped physical element. A similar analysis was conducted in Section 2 of \cite{Perrey}, with a focus on the mass matrix on the disk.

Let $K$ denote a disk centered at the origin with radius $h$, and consider a PWB centered at $\x_K=(0,0)$ with equally-spaced planewave directions. If $\x\in \partial K$ then $|\x| = h$ and employing \eqref{eq:prop1} we obtain 
\begin{equation}\label{eq:prop2}
	\x \cdot (\d_m-\d_\ell) = h a(\ell-m)b(m+\ell,t).
\end{equation}

We first arrive at closed-form expressions of the matrix entries (the generating functions) in terms of Bessel functions  
\begin{theorem}\label{thm:disk_entry} Let $K$ be a disk of radius $h$ centered at the origin. For $p$ evenly spaced plane waves
	the general entries of the mass, cross, and stiffness matrices are, for $\ell = 1,...,p,$
	\begin{eqnarray} 
		[\M^{disk}]_{m,\ell} &=& 2 \pi h J_0(\kappa h a(m-\ell)), \label{eq:Mdiskentry} 
	\end{eqnarray}
	\begin{eqnarray}
		[\S^{disk}]_{m,\ell} &=& \pi \kappa h^2 a(m-
		\ell)J_1(\kappa h a(m-\ell)),\label{eq:Sdiskentry}
	\end{eqnarray}
	and
	\begin{eqnarray}
		[\D^{disk}]_{m,\ell} &=& \pi\kappa^2h^3  J_0(\kappa h a(m-\ell))\cos \left (\frac{2(m-\ell)}{p}\pi \right)  \nonumber \\ 
		&&+ \, \pi\kappa^2h^3 J_2(\kappa h a(m-\ell)),\label{eq:Ddiskentry} 
	\end{eqnarray}
	respectively.
\end{theorem}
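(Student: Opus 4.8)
The plan is to compute each matrix entry directly by evaluating the defining boundary integrals over $\partial K$, parametrizing the circle of radius $h$ by $\x(t) = h\langle \cos(t),\sin(t)\rangle$ with $t\in[0,2\pi]$, so that $ds = h\,dt$. For the mass matrix, the integrand $\varphi_m\overline{\varphi_\ell}$ reduces to $\e^{\iota\kappa\,\x(t)\cdot(\d_m-\d_\ell)}$, and substituting \eqref{eq:prop2} gives an exponential of the form $\e^{\iota\kappa h\,a(m-\ell)\,b(m+\ell,t)}$. Since $b(m+\ell,t) = \sin(t-\theta_n/2)$ is a shifted sine by \eqref{eq:defineab}, this is exactly an integral of the type appearing in the Jacobi-Anger integral \eqref{BesselIntegral} with $q=0$; the shift in $t$ is harmless because we integrate over a full period. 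First I would carry out this substitution for $\M^{disk}$ to obtain $2\pi h\,J_0(\kappa h\,a(m-\ell))$, establishing \eqref{eq:Mdiskentry}.

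Next I would handle the cross matrix. Using the gradient identity \eqref{eq:grad}, $\nabla\varphi_m\cdot\nbm = \iota\kappa h\,\varphi_m\cos(\theta_m - t)$ on $\partial K$ where the outward normal is $\nbm = \langle\cos(t),\sin(t)\rangle$. Thus $[\S^{disk}]_{m,\ell}$ becomes $\iota\kappa h\int_0^{2\pi}\cos(\theta_m-t)\,\e^{\iota\kappa h\,a(m-\ell)b(m+\ell,t)}\,h\,dt$. I would rewrite $\cos(\theta_m - t)$ in a form aligned with the phase of $b$ and apply the general integral formula \eqref{eq:integralconvertp} with $q=1$ (odd case), which introduces a factor $J_1$ and a sine of the phase difference. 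The expected outcome is that the phase-matching sine collapses the $m+\ell$ dependence, leaving the stated $\pi\kappa h^2\,a(m-\ell)\,J_1(\kappa h\,a(m-\ell))$; the factor $a(m-\ell)$ arises naturally from the argument scaling in \eqref{eq:integralconvertp}. This confirms \eqref{eq:Sdiskentry}.

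For the stiffness matrix, both factors carry a normal derivative, so the integrand is $(\iota\kappa h)^2\cos(\theta_m-t)\cos(\theta_\ell-t)\,\e^{\iota\kappa h\,a(m-\ell)b(m+\ell,t)}$. I would expand the product $\cos(\theta_m-t)\cos(\theta_\ell-t)$ via the product-to-sum identity into a constant term $\tfrac12\cos(\theta_m-\theta_\ell)$ plus an oscillatory term $\tfrac12\cos(2t-\theta_m-\theta_\ell)$. The constant term, paired with the exponential, again reduces via \eqref{eq:integralconvertp} (even case, $q=0$) to a $J_0$ contribution carrying the factor $\cos(\theta_m-\theta_\ell) = \cos\!\big(\tfrac{2(m-\ell)}{p}\pi\big)$. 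The oscillatory term reduces via \eqref{eq:integralconvertp} with $q=2$ (even case) to a $J_2$ contribution; here I expect the $\cos(2\beta_1-\beta_2)$ factor to evaluate to unity after the phases align, yielding the clean $J_2(\kappa h\,a(m-\ell))$ term with no residual dependence on $m+\ell$. Collecting both pieces and the prefactors $-\kappa^2 h^2\cdot\tfrac12\cdot 2\pi\cdot h$ gives \eqref{eq:Ddiskentry}.

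The main obstacle I anticipate is the careful bookkeeping of the phase shifts: in each application of \eqref{eq:integralconvertp} one must correctly identify $A = \kappa h\,a(m-\ell)$, $\beta_1 = \theta_{m+\ell}/2$ (the shift inside $b$), and $\beta_2$ (the shift inside the trigonometric factor), and then verify that the resulting $\cos(q\beta_1-\beta_2)$ or $\sin(q\beta_1-\beta_2)$ factor simplifies so that the answer depends only on $m-\ell$ and not on $m+\ell$. This dependence on the difference alone is exactly what guarantees the matrices are Toeplitz (indeed circulant, after using \eqref{eq:amodp2}), so confirming that the $m+\ell$ phase cancels is both the crux of the computation and the structurally important payoff. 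The even/odd parity of $q$ selecting between the cosine and sine branches of \eqref{eq:integralconvertp} is where a sign or a vanishing term could easily be mishandled, so I would track these most carefully in the $\S$ and $J_2$ computations.
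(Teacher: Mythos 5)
Your route is the same as the paper's: parametrize $\partial K$ by the angle $t$ with $ds = h\,dt$, reduce the mass entry via \eqref{eq:prop2} to a full-period integral of $e^{\iota\kappa h a(m-\ell)\sin(\cdot)}$ handled by \eqref{BesselIntegral}, and reduce the cross and stiffness entries to instances of \eqref{eq:integralconvertp} with $q=1$ and $q=0,2$ respectively, verifying that the residual phase factors $\sin(q\beta_1-\beta_2)$, $\cos(q\beta_1-\beta_2)$ depend only on $m-\ell$. The mass and cross computations are set up correctly, and your identifications of $A$, $\beta_1$, $\beta_2$ coincide with the paper's.

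However, your stiffness computation as written would fail by a sign. The entry \eqref{eq:stiff} is $\int_{\partial K}(\nabla\varphi_m\cdot\nbm)\,(\overline{\nabla\varphi_\ell\cdot\nbm})\,ds$: the second factor is \emph{conjugated}, so by \eqref{eq:grad} the scalar prefactor is $(\iota\kappa h)\,\overline{(\iota\kappa h)} = +\kappa^2h^2$, not $(\iota\kappa h)^2 = -\kappa^2 h^2$ as in your integrand. (The same conjugation is what turns the product of exponentials into $\varphi_m\overline{\varphi_\ell}$ and hence puts $a(m-\ell)$ in the exponent, the mechanism you already used for $\M^{disk}$.) Carrying your stated prefactor $-\kappa^2h^2\cdot\tfrac12\cdot 2\pi\cdot h = -\pi\kappa^2h^3$ through the two applications of \eqref{eq:integralconvertp} would produce the \emph{negative} of \eqref{eq:Ddiskentry}, contradicting the claimed formula; note the sign must be positive since the diagonal entries satisfy $[\D^{disk}]_{m,m}=\int_{\partial K}|\nabla\varphi_m\cdot\nbm|^2\,ds>0$, and indeed the stated formula gives $[\D^{disk}]_{m,m}=\pi\kappa^2h^3\left(J_0(0)\cos(0)+J_2(0)\right)=\pi\kappa^2h^3$. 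With the prefactor corrected to $+\kappa^2h^2$, the remainder of your outline (product-to-sum split, the $q=0$ term carrying $\cos(\theta_m-\theta_\ell)=\cos\bigl(\tfrac{2(m-\ell)}{p}\pi\bigr)$, the $q=2$ term with $\cos(2\beta_1-\beta_2)=1$) goes through exactly as in the paper.
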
 These expressions will lead to the generating functions of these matrices.
\begin{proof} To establish \eqref{eq:Mdiskentry}, we employ (\ref{eq:prop2}) and the defnintion of $b$ given in (\ref{eq:defineab}) to get 
	\begin{eqnarray*}
		[\M^{disk}]_{m,\ell}&=& \int_0^{2 \pi} e^{\iota \kappa h (\d_m-\d_\ell)\cdot \x} h dt \\ \nonumber 
		&=&h \int_0^{2\pi} e^{\iota \kappa h a(m-\ell)b(m+\ell,t)} dt\\ 
		&=& h \int_0^{2\pi} e^{\iota \kappa h a(m-\ell)\sin \left (t- \frac{\pi(m+\ell)}{p}\right )}dt \\ \nonumber
		&=& h \int_{- \frac{\pi(m+\ell)}{p}}^{2\pi -  \frac{\pi(m+\ell)}{p}} e^{\iota \kappa h a(m-\ell) \sin(t)} dt \\ \nonumber
		&=&2\pi h J_0(\kappa h a(m-\ell)),
	\end{eqnarray*}
	where we have used the integral representation in \eqref{BesselIntegral}. Similarly, for the cross matrix we use  (\ref{eq:grad}) to compute
	\begin{eqnarray*}
		[\S^{disk}]_{m,\ell}
		&=&\int_0^{2\pi} i \kappa h  e^{i \kappa h a(m-\ell)b(m+\ell,t) }\cos\left (t-\frac{2\pi}{p}m \right ) h dt\\
		&=& i \kappa h^2 (2\pi i \sin \left (\frac{(m+\ell)}{p}\pi - \frac{2m}{p}\pi \right )J_1(\kappa h a(m-\ell )) \\
		&=&  \pi \kappa h^2 a(m-\ell)J_1(\kappa h a(m-\ell)) 
	\end{eqnarray*} 
	where we have taken advantage of \eqref{eq:integralconvertp} with $A=\kappa h a(m-\ell),q=1, \beta_1=\frac{(m+\ell)}{p}\pi,$ and $ \beta_2= \frac{2m}{p} \pi$. Finally, in the case of the stiffness matrix,
	\begin{eqnarray*}
		[\D^{disk}]_{m,\ell}
		&=&(\kappa h)^2\int_0^{2\pi}e^{i\kappa h a(m-\ell)b(m+\ell,t)}\cos \left ( t-\frac{2m}{p} \pi\right )\cos \left (t-\frac{2\ell}{p} \pi \right )hdt\\
		&=& \frac{1}{2}\kappa^2h^3\int_0^{2\pi}e^{i\kappa h a(m-\ell)b(m+\ell,t)} \cos \left (\frac{2(m-\ell)}{p}\pi \right )  \\
		&&+ \frac{1}{2}\kappa^2h^3\int_0^{2\pi}e^{i\kappa h a(m-\ell)b(m+\ell,t)} \cos \left (2t-\frac{2(m+\ell)}{p}\pi \right ) dt.\\
	\end{eqnarray*}	
	We treat the two integrals on the right side separately. 
	First, we use \eqref{eq:integralconvertp} with  $A=\kappa h a(m-\ell), q=0, \beta_1= \frac{(m+\ell)}{p}\pi,$ and $\beta_2 = -\frac{2(m-\ell)}{p} \pi$ to get
	\begin{eqnarray}
\frac{1}{2}\kappa^2h^3\int_0^{2\pi}e^{i\kappa h a(m-\ell)b(m+\ell,t)} \cos \left (\frac{2(m-\ell)}{p}\pi \right ) dt =\pi\kappa^2h^3 \cos \left (\frac{2(m-\ell)}{p}\pi \right)  J_0(\kappa h a(m-\ell)). \nonumber
	\end{eqnarray}
	Next, we see with $A=\kappa h a(m-\ell), q=2, \beta_1= \frac{(m+\ell)}{p}\pi,$ and $\beta_2 = \frac{2(m+\ell)}{p}\pi$, 
	\begin{eqnarray}
		\frac{1}{2}\kappa^2h^3\int_0^{2\pi}e^{i\kappa h a(m-\ell)b(m+\ell,t)} \cos \left (2t-\frac{2(m+\ell)}{p}\pi \right ) dt &=& \pi\kappa^2h^3 (2\pi) J_2(\kappa h a(m-\ell))\cos(2\beta_1-\beta_2) \nonumber \\
		&=& \pi\kappa^2h^3  J_2(\kappa h a(m-\ell)). \nonumber
\end{eqnarray}
This yields the result in (\ref{eq:Ddiskentry}).\end{proof}

Theorem \ref{thm:disk_entry} allows us, by inspection and using the identities in the previous section, to readily conclude the following result.

\begin{theorem} \label{thm2} Let $K$ be a disk of radius $h$ centered at the origin.  The mass, cross, and stiffness matrices in (\ref{eq:mass}) - (\ref{eq:stiff}) resulting from  $p$ evenly spaced plane waves are real, circulant and symmetric. 
\end{theorem}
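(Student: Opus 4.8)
The plan is to take the explicit closed-form entries furnished by Theorem~\ref{thm:disk_entry} and verify each of the three properties by direct inspection, since all the genuine analytic work has already been carried out there. Reality is immediate: $\kappa$, $h$, and $a(m-\ell)=2\sin((m-\ell)\pi/p)$ are all real, the Bessel functions $J_0,J_1,J_2$ take real values at real arguments, and the cosine factor in \eqref{eq:Ddiskentry} is real; hence every entry of $\M^{disk}$, $\S^{disk}$, and $\D^{disk}$ is real.

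For the circulant property I would invoke the characterization \eqref{iscirc}: it suffices to show that each entry depends on $m-\ell$ alone (which the formulas already display) and that this dependence is $p$-periodic in $m-\ell$. The periodicity follows from the identity $a(n\pm p)=-a(n)$ in \eqref{eq:amodp2} together with the argument-parity $J_n(-x)=(-1)^nJ_n(x)$ of the Bessel functions. Concretely, for $\M^{disk}$ one uses that $J_0$ is even, so $J_0(\kappa h\,a(n+p))=J_0(-\kappa h\,a(n))=J_0(\kappa h\,a(n))$; for $\D^{disk}$ one additionally notes that $J_2$ is even and that the cosine factor has period $p$ in $m-\ell$. The cross matrix is the one case needing a little care: its entry carries the odd factor $a(m-\ell)$ multiplying $J_1$, which is itself odd, so the product $a(n)J_1(\kappa h\,a(n))$ is \emph{even} in $n$ and therefore invariant under $n\mapsto n+p$. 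This pairing of two odd quantities is exactly what restores periodicity.

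Symmetry is verified by the same parity bookkeeping applied to the swap $m\leftrightarrow\ell$, i.e. $n=m-\ell\mapsto -n=\ell-m$, using $a(-n)=-a(n)$ from \eqref{eq:amodp2}. For $\M^{disk}$ and the $J_2$-part of $\D^{disk}$ the even-ness of $J_0$ and $J_2$ gives invariance; the cosine factor is even and hence invariant; and for $\S^{disk}$ the same even product $a(n)J_1(\kappa h\,a(n))$ ensures $[\S^{disk}]_{m,\ell}=[\S^{disk}]_{\ell,m}$.

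I do not anticipate a genuine obstacle: once Theorem~\ref{thm:disk_entry} is in hand, the statement is a mechanical consequence of Bessel parity and the (anti)symmetry and periodicity of $a(\cdot)$. The only point deserving emphasis is that the symmetry of $\S^{disk}$ is not evident from its defining integral \eqref{eq:cross} --- unlike $\M$ and $\D$, the cross matrix is not Hermitian for a general element --- so I would stress that this symmetry is a special feature of the disk, arising precisely from the cancellation of the two odd factors $a$ and $J_1$.
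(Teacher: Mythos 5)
Your proposal is correct and follows essentially the same route as the paper's proof: reality by inspection of the closed-form entries from Theorem~\ref{thm:disk_entry}, symmetry from the oddness of $a(\cdot)$ and $J_1$ set against the evenness of $J_0$ and $J_2$, and the circulant property from $a(n\pm p)=-a(n)$ combined with those same parity facts. Your closing observation --- that the symmetry of $\S^{disk}$ is a special feature of the disk, arising from the cancellation of the two odd factors $a$ and $J_1$ --- is a worthwhile emphasis that the paper leaves implicit.
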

\begin{proof}
	The fact that the entries for each matrix are real is evident from the results of the previous theorem.  The symmetry of each matrix follows from follows by noting $a(n)$ and $J_1$ are odd in their argument while $J_0$ and $J_2$ are even.  It is also clear that each of these matrices is Toeplitz: the $m,\ell$ entry of these matrices only depends on the difference $m-\ell$.
	
	It remains to show the matrices are circulant, i.e., that each row is a right-shift of the row above it. If $(m-\ell) = (i-j)  \mod p$, then $a(m-\ell) = -a(i-j)$  and hence
	\begin{equation*}	
		[\M^{disk}]_{m,\ell} =  2 \pi h J_0(\kappa h -a(i-j)) = [\M^{disk}]_{i,j}\quad \text{if}\,\, (m-\ell) =(i-j) \, \mod p.
	\end{equation*}
	Similar arguments hold for the cross and stiffness matrices.
\end{proof}

We are now in a position to explicitly compute the spectra of the mass ($\M^{disk}$), cross ($\S^{disk}$), and stiffness ($\D^{disk}$) matrices.

In each, if the number of plane waves $p$ is odd, then $\lambda_0$ has multiplicity one, and the other eigenvalues have multiplicity 2. If the number of plane waves is even, $\lambda_0$ and $\lambda_{p/2}$ have multiplicity one and the rest have multiplicity 2.

\begin{theorem}\label{thm:mass_spectrum}
	Let $K$ be a disk of radius $h$ centered at the origin, and consider the mass matrix (\ref{eq:mass}) resulting from  $p$ evenly spaced plane waves. Then, the following hold for $L=0,...,p-1$:		 
	\begin{enumerate}
		
		\item The generating function for $\M^{disk}$ is   $F_{\M}^{disk}:=2\pi h J_0\left(2\kappa h \sin\left(\frac{x}{2} \right )\right )$. 
		\item There is a simple eigenvalue of $\M^{disk}$ given by
		$$ \lambda_0(\M^{disk})  =\sum_{j=0}^{p-1}F_{\M}^{disk} \approx p h \int_0^{2\pi}J_0(2\kappa h)\sin(x)\, dx $$
		\item The multiplicity-two eigenvalues of $\M^{disk}$ are given by
		\begin{align}
			\lambda_L(\M^{disk}) &= \sum_{j=0}^{p-1}F_{\M}^{disk}\left (\frac{2j}{p} \pi \right ) e^{i \frac{2j}{p}L \pi} \nonumber \\
			&\approx hp \int_0^{2\pi}J_{2L}(2\kappa h \sin(t))\, dt, \qquad L=1,...,\left\lfloor \frac{p-1}{2} \right\rfloor. \nonumber 
		\end{align}
		\item 	If $p$ is even, there is an additional simple eigenvalue \begin{align}
			\lambda_{p/2}(\M^{disk}) = \sum_{j=0}^{p-1}(-1)^jF_{\M}^{disk}\left (\frac{2j}{p} \pi \right ) \approx hp \int_0^{2\pi} J_{p}(2\kappa h \sin(t))\, dt
	\end{align} \end{enumerate}
\end{theorem}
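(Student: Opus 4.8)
The plan is to read off the generating function from Theorem~\ref{thm:disk_entry}, express each eigenvalue as a discrete Fourier sum using the circulant structure established in Theorem~\ref{thm2}, and then pass to the stated integrals via the quadrature interpretation of \eqref{eq:eigofC} followed by Lemma~\ref{lemma:bessel-coeffs}. First I would identify the generating function. Since $\M^{disk}$ is circulant and symmetric its first row determines it; taking $m=0,\ell=j$ in \eqref{eq:Mdiskentry}, using that $J_0$ is even together with $a(j)=2\sin(\theta_j/2)$ from \eqref{eq:defineab}, gives $[\M^{disk}]_{0,j}=2\pi h\,J_0(2\kappa h\sin(\theta_j/2))=F_{\M}^{disk}(\theta_j)$, which is claim~(1). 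Substituting $c_j=F_{\M}^{disk}(\theta_j)$ into the circulant eigenvalue formula \eqref{eq:eigofC} immediately yields the exact finite sums: the index $q=0$ gives $\lambda_0=\sum_j F_{\M}^{disk}(\theta_j)$; a general index $q=L$ gives $\lambda_L=\sum_j F_{\M}^{disk}(\theta_j)e^{i\theta_j L}$; and when $p$ is even the index $q=p/2$ has $\theta_{p/2}=\pi$, so $e^{-ij\theta_{p/2}}=(-1)^j$ and $\lambda_{p/2}=\sum_j(-1)^jF_{\M}^{disk}(\theta_j)$.

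For the multiplicities I would invoke the real--symmetric--circulant facts in the background: by \eqref{eq:eigenvalue_real_circulant} and reality one has $\lambda_q=\lambda_{p-q}$, so $\lambda_0$ is simple, each $\lambda_L$ with $1\le L\le\lfloor(p-1)/2\rfloor$ is paired with $\lambda_{p-L}$ and hence has multiplicity two, and for even $p$ the self-paired index $q=p/2$ furnishes the second simple eigenvalue. This reproduces exactly the multiplicity pattern announced just before the theorem.

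The substantive step is converting the finite sums into the clean Bessel integrals. Here I would use the quadrature reading of \eqref{eq:eigofC}, namely $\sum_{j=0}^{p-1}F_{\M}^{disk}(\theta_j)e^{i\theta_j L}\approx\frac{p}{2\pi}\int_0^{2\pi}F_{\M}^{disk}(x)e^{iLx}\,dx=ph\int_0^{2\pi}J_0(2\kappa h\sin(x/2))e^{iLx}\,dx$, and then apply Lemma~\ref{lemma:bessel-coeffs} with $M=0$ and $B=2\kappa h$ (recalling that $I_1$ is real, so the sign of the exponent is immaterial) to obtain $ph\int_0^{2\pi}J_{2L}(2\kappa h\sin t)\,dt$. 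Specializing $L=0$ gives claim~(2), with the integrand correctly read as $J_0(2\kappa h\sin t)$; a general $L$ gives claim~(3); and $L=p/2$ gives $J_{2(p/2)}=J_p$, which is claim~(4).

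The \emph{main obstacle} is precisely this last manoeuvre. The naive Fourier coefficient of the generating function is an integral of $J_0(2\kappa h\sin(x/2))$ against $e^{iLx}$, whose half-angle argument makes it not itself a recognizable Bessel quantity; Lemma~\ref{lemma:bessel-coeffs} is exactly the bridge that collapses this into the clean $J_{2L}(2\kappa h\sin t)$ form. I would also be careful to flag that the passage from the finite sum to the integral is genuinely approximate: the generating function $J_0(2\kappa h\sin(x/2))$ is analytic and periodic but has infinitely many nonzero Fourier modes, so the quadrature in \eqref{eq:eigofC} is exact only up to aliasing, which is why the statement uses $\approx$ rather than equality.
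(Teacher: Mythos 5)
Your proposal is correct and follows essentially the same route as the paper's proof: read the generating function off Theorem~\ref{thm:disk_entry} (using the evenness of $J_0$ and $a(j)=2\sin(\theta_j/2)$), obtain the exact eigenvalue sums from the circulant formula \eqref{eq:eigofC}, deduce the multiplicity pattern from the real--symmetric--circulant structure, and convert the sums to Bessel integrals via the Trapezoidal-rule quadrature followed by Lemma~\ref{lemma:bessel-coeffs} with $M=0$, $B=2\kappa h$. Your remarks that the quadrature step is only approximate due to aliasing, and that the integrand in claim~(2) should be read as $J_0(2\kappa h\sin(x))$, both match the paper's own treatment (the paper quantifies the aliasing error as $I_L-I_{L,p}=\sum_{j\ge 1}\hat f_{L+jp}$).
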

\begin{proof} We have already shown $\M^{disk}$ is real, symmetric, and circulant. To establish (1), recall from the previous theorem that $\M^{disk}$ is circulant, and hence
	$$ \M^{disk} = \texttt{circ}[{\mathrm m_0,m_1,...,m_{p-1}} ], \qquad \mathrm{m}_j =  2 \pi h J_0\left (2\kappa h \sin \left (\frac{j \pi}{p}\right)\right ) \equiv F_{\M}^{disk}(\xi_j)$$
	where $\xi_j:=\frac{2\pi}{p}j$ and 
	$ F_{\M}^{disk}(x):= 2 \pi h J_0\left(2\kappa h \sin\left( \frac{x}{2} \right ) \right )$ on $[0,2\pi]$ is the generating function of this circulant matrix.

	The eigenvalues of the $[\M^{disk}]$ are therefore given as
	\begin{align}
		\lambda_L(\M^{disk})&:=\sum_{j=0}^{p-1}\mathrm{m}_j {\rm e}^{-i\frac{2j}{p}L\pi} 
		=\sum_{j=0}^{p-1}F_{\M}(\xi_j) 
		{\rm e}^{-iL \xi_j}, \quad L=0,1,...,p-1 	\label{eq:theoremMassDCT}
	\end{align}
	It is easy to see that \begin{align*} \lambda_0(\M^{disk}) &= \sum_{j=0}^{p-1}F_{\M}(\xi_j)\\
		& \approx\frac{p}{2\pi} \int_0^{2\pi}F_{\M}(x)\, dx\\
		&= hp \int_0^{2\pi}J_0\left (2\kappa h \sin \left(\frac{t}{2} \right )\right )\, dt = hp \int_0^{2\pi}J_0(2\kappa h \sin(s))\, ds,\end{align*}
	where the approximation follows via the Trapezoidal rule.
	
	Recall that  $\lambda_s(\M^{disk})=\lambda_{p-s}(\M^{disk}),  s=1,...,\left \lfloor \frac{p-1}{2} \right \rfloor$. It suffices to  consider eigenvalues \\ $\lambda_0(\M^{disk}),...,\lambda_{\left\lfloor\frac{p-1}{2}\right\rfloor}(\M^{disk}),$ and $\lambda_{p/2}(\M^{disk})$ if $p$ is even. 
	
	From \eqref{eq:theoremMassDCT} it is clear that $\lambda_L(\M^{disk})$ are precisely the {\it discrete Fourier coefficients} of the function $F_{\M}^{disk}.$ We now examine the relationship between these eigenvalues $\lambda_j(\M^{disk})$ and the Fourier (cosine) coefficients of $F_{\M}^{disk}(x)$ on $[0,2\pi]$. From the Trapezoidal rule
	\begin{align}
		I_{L,p}:=\sum_{j=0}^{p-1} F_{\M}^{disk}(\xi_j){\rm e}^{-iL \xi_j}&\approx 	\frac{p}{2\pi}\int_0^{2\pi}F_{\M}^{disk}(x) e^{-iLx} dx =:I_{L}, \quad L=0,...,\left \lfloor \frac{p-1}{2} \right \rfloor.\nonumber
	\end{align}	
	Since $\M^{disk}$ is real and symmetric, $\lambda_j(\M^{disk})_j$ are real, and therefore so is the sum on the left.  We see
	\begin{align*} 
		\lambda_L(\M^{disk}) &=I_{L,p}
		\approx  \frac{p}{2\pi}\int_{0}^{2\pi}F_{\M}(x) \cos(Lx) dx = I_L,\qquad  L=0,...,\left \lfloor \frac{p-1}{2} \right \rfloor.
	\end{align*} 
	The quantity $I_L-I_{L,p}$ measures the difference between the true and discrete Fourier coefficients of the (analytic and periodic) function $F_{\M}^{disk}$. Concretely, $I_L -I_{L,p} = \sum_{j=1}^\infty \hat{f}_{L+jp}$ where $\hat{f}_k$ is the $kth$ Fourier coefficient of $F_{\M}^{disk}$.
	
	Using the first integral identity from Lemma \ref{lemma:bessel-coeffs} with $M=0$ and $B= 2\kappa h$, we immediately get the approximation
	
	\begin{align*}
		\lambda_L(\M^{disk}) &\approx 	
		hp\int_0^{2\pi} J_{2L}(2\kappa h\sin(t)) dt, \quad L=1,...,\left \lfloor \frac{p-1}{2} \right \rfloor.
	\end{align*}
	Moreover, if $p$ is even, \begin{align}
		\lambda_{p/2}(\M^{disk}) = \sum_{j=0}^{p-1}(-1)^jF_{\M}^{disk}\left (\frac{2j}{p} \pi \right ) \approx hp \int_0^{2\pi} J_{p}(2\kappa h \sin(t))\, dt
	\end{align} 
	
\end{proof}
Now for a given fixed $t\in [0,2\pi)$, 
\begin{equation}
	\max{|J_{2L}(2\kappa h \sin(t))|} \rightarrow 0
\end{equation}
rapidly as $L$ increases. This follows from the fact that $J_{2L}(2\kappa h \sin(t))$ are precisely the cosine coefficients in the Fourier series expansion of an analytic and periodic function ${\rm e}^{i 2\kappa h \sin(t)\sin(\theta)}$. 

Suppose $\kappa h>0$ is fixed. When the number of plane waves $p \gg 2\kappa h$, we observe using the asymptotic relation (\ref{eq:besselsmallargument}) that the eigenvalues $\lambda_s, 2\kappa h <s\leq \lfloor p/2 \rfloor $ of $\mathtt{M}_D$ will behave as \begin{align}|\lambda_{s}(\M^{disk})| &\approx  \left\vert
	hp\int_0^{2\pi} J_{2s}(2\kappa h\sin(t)) dt \right \vert \nonumber \\
	&\approx\frac{2\pi hp}{\Gamma(2s+1)} (\kappa h)^{2s}.\quad  \label{eq:distrend}
\end{align} 
Clearly, if $\kappa h <1$ this asymptotic behaviour is achieved very  rapidly even for small wave numbers. This behaviour is illustrated in Figure \ref{fig:mass}, and demonstrates both the effectiveness of the asymptotic estimate and the exponential decay of the larger-index eigenvalues. In this figure we also plot the integral expressions from \autoref{thm:mass_spectrum}, which are excellent approximations of the computed eigenvalues.

\begin{figure}
	\centering
	\includegraphics[width=0.7\linewidth]{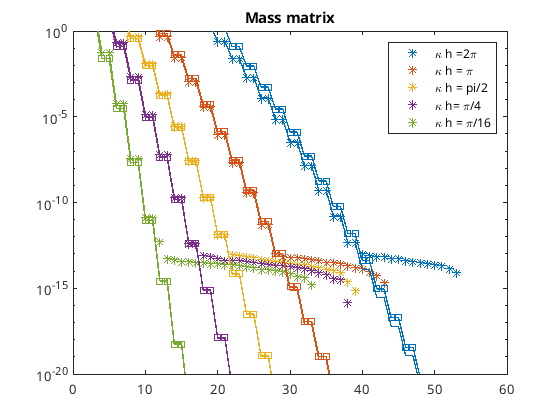}
	\caption{Eigenvalues $\lambda_s(\M^{disk})$ $v/s s, s=0,1,...p-1$  of mass matrix $\M^{disk}$ on the disk (marked by asterix), for different choices of wave number $\kappa$. In each of these experiments, the domain is a disk of radius $h=2\pi$, and the total number of plane waves is $p=61$. The wave number $\kappa$ is chosen to be $\kappa = 1,1/2,...1/32.$. Also plotted: integral approximations from \autoref{thm:mass_spectrum}(line) and asymptotic expression  $\frac{ hp}{\Gamma(2s+1)} (\kappa h)^{2s}$ (line with squares).  }
	\label{fig:mass}
\end{figure}

More detailed asymptotics are possible, but the point is clear:   As soon as $L\gg \kappa h, \lambda_L(\M^{disk})$ becomes small and the conditioning of each of the three matrices will deteriorate.  For {\it any} fixed wave number $\kappa$ and any fixed element size $h$, the eigenvalues of the mass matrices $\M^{disk}$ decay to zero as  $\frac{2\pi hp}{\Gamma(2s+1)} (\kappa h)^{2s}$.  This means that provided the Trapezoidal approximation is of small error,  $\lambda_j(\M^{disk}) \rightarrow 0$ for large enough $j$, i.e., when the number of plane wave $p$ become sufficiently large, and consequently the mass, stiffness and cross matrices become nearly singular. 

Let $p$ be even. Based on the results of the previous theorem, one would expect the spectral condition number of the mass matrix behaves as
\begin{align}
	{\rm Cond} \left( \M^{disk}\right )  &= \left \vert \frac{\lambda_{max}}{\lambda_{min}}\right\vert \approx  \left\vert\frac{\int_0^{2\pi} J_0(2\kappa h \sin(t))\, dt}{\int_0^{2\pi} J_{p}(2\kappa h \sin(t))\, dt}\right\vert  &\approx  C\,\left(\frac{\Gamma(p+1)}{2\pi hp} (\kappa h)^{-p}\right)	
	\label{eq:CondMDisk} 
\end{align} The quality of this approximation is based on how well the eigenvalues $\lambda_j(\M^{disk})$ are approximated by the exact Fourier coefficients of $F_M^{disk}$.
The error in the Trapezoidal rule will depend on the number $p$ of plane waves used. Recall that we make an approximation error 
\begin{equation*}
 \left\vert \frac{p}{2\pi}\int_0^{2\pi} F_{\M}^{disk}(x) \cos(jx) dx - \lambda_j(\M^{disk}) \right\vert =  \left\vert \frac{p}{2\pi}\int_0^{2\pi} F_{\M}^{disk}(x) \cos(jx) dx - \sum_{k=0}^{p-1}F_{\M}^{disk}(\xi_k) 
{\rm e}^{-i j \xi_k} \right\vert.
\end{equation*}
Following Weidemann, or Trefethen (2014), since the function $F_M$ is periodic and analytic on $[0,2\pi]$, the Trapezoidal rule is {\it supergeometric:}
\begin{equation}
	\left\vert \frac{p}{2\pi}\int_0^{2\pi} F_{\M}^{disk}(x) \cos(jx) dx - \sum_{k=0}^{p-1}F_{\M}^{disk}(\xi_k) \exp(i j \xi_k) \right\vert \leq \frac{2\pi \mathcal{A_M}}{e^{ap-1}} \nonumber 
\end{equation}
where $|F_{\M}(z)\cos(jz)|\leq \mathcal{A_M}$ in the strip $-a<Im(z)<a$. 

In Figure \ref{fig:circle-conditioning}, we present the largest and smallest eigenvalues of the mass matrix $\M^{disk}$ on a circle corresponding to different choices of the number of plane waves $p$. We also present the condition number of the mass matrix for these cases. Finally, we consider the minimum DFT of the first row of the mass matrix. Standard theoretical arguments predict the DFT coefficients of the first row of the (circulant) mass matrix are exactly the eigenvalues of the matrix; this is seen in the graphs. We also see, as predicted, that if $\kappa h$ is small, then the conditioning of the mass matrix degenerates even with a small number of plane waves.

\begin{figure}
	\centering
	\includegraphics[width=0.4\linewidth]{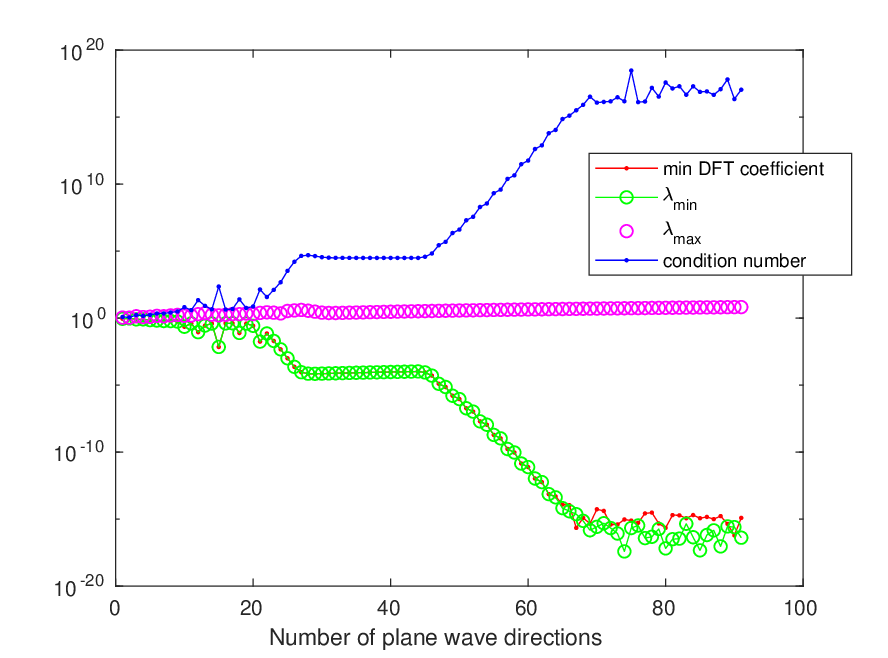}
	\includegraphics[width=0.4\linewidth]{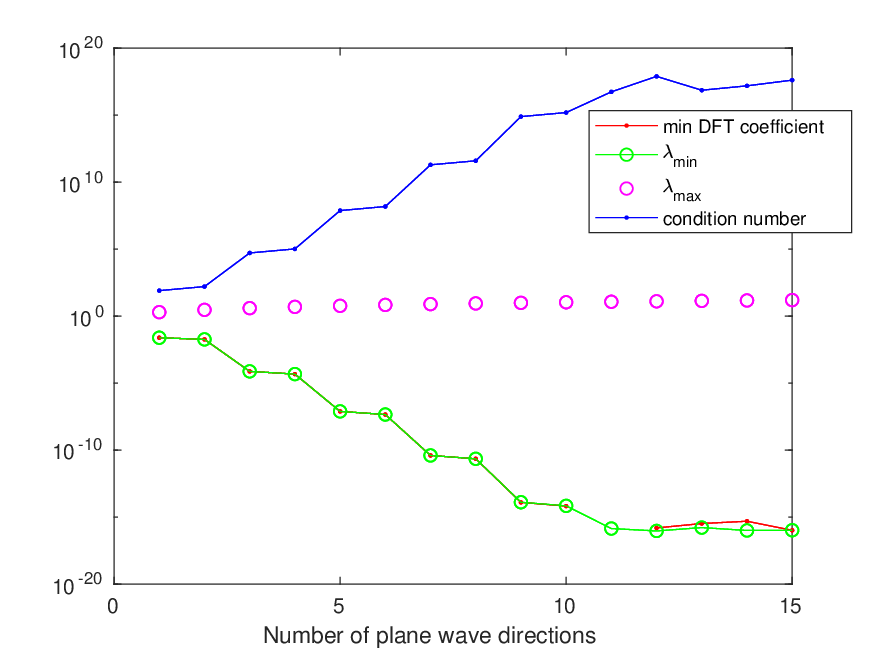}
	
	\caption{The condition number, smallest eigenvalue, largest eigenvalue, and minimum DFT coefficient corresponding to first row of the mass matrix $\M^{disk}$ on a disk. Left: $\kappa h = 10\pi.$ Right: $\kappa h = 0.1\pi$}
	\label{fig:circle-conditioning}
\end{figure}

We next examine the spectrum of the cross matrix using a very similar argument.
\begin{theorem}\label{thm:cross_spectrum}
	Let $K$ be a disk of radius $h$ centered at the origin, and consider the  cross matrix in (\ref{eq:cross})resulting from  $p$ evenly spaced plane waves. Then, the following hold for $L=0,...,p-1$:		 
	\begin{enumerate}
		\item 	The generating function of the cross matrix $\S^{disk}$is   $$F_{\S}^{disk}:=2\pi \kappa h^2 \sin\left(\frac{x}{2} \right)J_1\left (2\kappa h \sin\left( \frac{x}{2}\right ) \right).$$
		\item There is a simple eigenvalue,
		\begin{align*}
			\lambda_0(\S^{disk})&= \sum_{j=0}^{p-1}F_{\S}^{disk}\left (\frac{2j}{p} \pi \right) \approx p \kappa h^2 \int_0^{2\pi} \sin\left (\frac{x}{2} \right)J_1\left (2\kappa h \sin \left(\frac{x}{2} \right) \right)\, dx\nonumber \\ 
		\end{align*}
		\item The eigenvalues $\lambda_L(\S^{disk}),L=1,...,\left\lfloor \frac{p-1}{2} \right\rfloor$  of $\S^{disk}$ have multiplicity 2 and are given by 
		\begin{align} 
			\lambda_L(\S^{disk})&= \sum_{j=0}^{p-1}F_{\S}^{disk}\left (\frac{2j}{p} \pi \right) e^{-i \frac{2j}{p}L \pi}  \nonumber \\ 
			&=(2p\kappa h^2)\int_0^{\pi/2}  \left[J_{2L+1}(\sin(2\kappa h \sin (x))) - J_{2L-1}(2 \kappa h \sin(x)\right]\, dx.
			\nonumber
		\end{align}
		\item If $p$ is even, $\S^{disk}$ has an additional simple eigenvalue,
		\begin{align}
			\lambda_{p/2}(\S^{disk})&= \sum_{j=0}^{p-1}(-1)^jF_{\S}^{disk}\left (\frac{2j}{p} \pi \right) \nonumber\\ &\approx  2p\kappa h^2\int_0^{\pi/2}  \left[J_{p+1}(\sin(2\kappa h \sin (x))) - J_{p-1}(2 \kappa h \sin(x)\right]\, dt.
		\end{align}
		
\end{enumerate}\end{theorem}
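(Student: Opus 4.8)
The plan is to mirror the proof of Theorem~\ref{thm:mass_spectrum} almost verbatim, since $\S^{disk}$ is also real, circulant, and symmetric (by Theorem~\ref{thm2}). The eigenvalues of a real symmetric circulant matrix are its discrete Fourier coefficients, and by \eqref{eq:eigenvalue_real_circulant} they come in conjugate pairs $\lambda_L = \lambda_{p-L}$; hence $\lambda_0$ is simple, $\lambda_{p/2}$ is simple when $p$ is even, and the rest have multiplicity two. This immediately delivers the multiplicity claims. The generating function in part~(1) follows by substituting the entry formula \eqref{eq:Sdiskentry} and writing the off-diagonal index dependence $a(m-\ell)=2\sin(\tfrac{(m-\ell)\pi}{p})$ as the evaluation $F_{\S}^{disk}(\xi_j)$ at $\xi_j=\tfrac{2\pi}{p}j$, giving $F_{\S}^{disk}(x)=\pi\kappa h^2\,a\,J_1(\kappa h\, a)$ with $a=2\sin(x/2)$, i.e.\ $2\pi\kappa h^2\sin(x/2)\,J_1(2\kappa h\sin(x/2))$.

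\textbf{Setting up the eigenvalues.} For each part I would write $\lambda_L(\S^{disk})=\sum_{j=0}^{p-1}F_{\S}^{disk}(\xi_j)\,e^{-iL\xi_j}$ as the discrete Fourier coefficient, then invoke the Trapezoidal-rule approximation (exactly as in Theorem~\ref{thm:mass_spectrum}) to replace the sum by $\tfrac{p}{2\pi}\int_0^{2\pi}F_{\S}^{disk}(x)e^{-iLx}\,dx$, with the same supergeometric error estimate since $F_{\S}^{disk}$ is again periodic and analytic on $[0,2\pi]$. Part~(2) is the $L=0$ case, where the exponential is unity and the integral reduces to the stated form directly. For part~(4), the sign $(-1)^j=e^{-i\pi j}$ corresponds to $L=p/2$, so it is the same computation with $L\to p/2$, which inside the Bessel identity produces order $p\pm1$.

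\textbf{The main obstacle} is converting the continuous Fourier coefficient $\tfrac{p}{2\pi}\int_0^{2\pi}F_{\S}^{disk}(x)\cos(Lx)\,dx$ into the difference-of-Bessel form $J_{2L+1}-J_{2L-1}$ claimed in part~(3). Unlike the mass case, where Lemma~\ref{lemma:bessel-coeffs} applies immediately to $J_0(B\sin(x/2))$, here the integrand carries the extra factor $\sin(x/2)\,J_1(2\kappa h\sin(x/2))$. The plan is to eliminate this factor using the recurrence $J_1(z)=\tfrac{z}{2}\bigl(J_0(z)+J_2(z)\bigr)$ in the reversed form, or more directly the identity $\tfrac{2\nu}{z}J_\nu(z)=J_{\nu-1}(z)+J_{\nu+1}(z)$, to trade the algebraic factor $\sin(x/2)$ (which is proportional to the argument $z=2\kappa h\sin(x/2)$ divided by $2\kappa h$) for a sum of Bessel functions of adjacent order. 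Concretely I expect $\sin(x/2)J_1(2\kappa h\sin(x/2))$ to rewrite so that applying Lemma~\ref{lemma:bessel-coeffs} to each resulting term yields Fourier coefficients of the form $\int J_{2L\pm1}(2\kappa h\sin(t))\,dt$; the symmetry $J_n(-z)=(-1)^nJ_n(z)$ then folds the $[0,2\pi]$ integral onto $[0,\pi/2]$, accounting for the integration limits and the factor $2$ in the stated constant $2p\kappa h^2$.

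\textbf{A caveat on the stated form.} The expression in part~(3) as printed, $J_{2L+1}\bigl(\sin(2\kappa h\sin(x))\bigr)-J_{2L-1}(2\kappa h\sin(x))$, has an inconsistent argument in the first term and a seemingly doubled angle $\sin(x)$ rather than $\sin(x/2)$; the derivation above suggests the intended integrand is $J_{2L+1}(2\kappa h\sin(x))-J_{2L-1}(2\kappa h\sin(x))$ after the substitution $t=x/2$ has rescaled the domain, and I would verify the exact constants and limits by carefully tracking the half-angle substitution through the recurrence step.
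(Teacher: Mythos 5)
Your overall scaffolding --- circulant structure from Theorem \ref{thm2}, eigenvalues as discrete Fourier coefficients of the generating function, the trapezoidal-rule passage to true Fourier coefficients, and the multiplicity count from \eqref{eq:eigenvalue_real_circulant} --- is exactly the paper's route, and parts (1), (2) and (4) would go through as you describe. The genuine gap is in your plan for part (3), which you yourself flag as the main obstacle. The recurrence you propose, $\frac{2\nu}{z}J_\nu(z)=J_{\nu-1}(z)+J_{\nu+1}(z)$, removes a factor of $1/z$, but here the integrand is $\sin(x/2)\,J_1\left(2\kappa h\sin(x/2)\right)=\frac{1}{2\kappa h}\,z\,J_1(z)$ with $z=2\kappa h\sin(x/2)$: the algebraic factor \emph{multiplies} the Bessel function rather than dividing it. Substituting $J_1(z)=\frac{z}{2}\left(J_0(z)+J_2(z)\right)$ makes things worse, producing $\frac{z^2}{4\kappa h}\left(J_0(z)+J_2(z)\right)$, and no finite application of the standard recurrences writes $zJ_1(z)$ as a combination of adjacent-order Bessel functions (one only gets the infinite series $zJ_1(z)=\sum_{k\geq 1}(-1)^{k+1}4k\,J_{2k}(z)$). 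Moreover, Lemma \ref{lemma:bessel-coeffs} is stated and proved for even orders $J_{2M}$ (its proof exploits the $\pi$-periodicity of $J_{2M}(B\sin s)e^{i2Ls}$), so it cannot be invoked for the odd-order terms your rewriting is supposed to produce.

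The paper closes this step differently: it first merges the factor $\sin(s)$ with the Fourier kernel via the product-to-sum identity $\sin(s)\cos(2Ls)=\frac{1}{2}\left[\sin((2L+1)s)+\sin((1-2L)s)\right]$, and then derives, inline, a $J_1$-analogue of Lemma \ref{lemma:bessel-coeffs} from the double-integral Jacobi--Anger representation,
\begin{equation*}
\int_0^{2\pi}e^{iAs}J_1(2\kappa h\sin(s))\,ds=\int_0^{2\pi}e^{-it}J_{-A}(2\kappa h\sin(t))\,dt ,
\end{equation*}
which swaps the Fourier index $A=\pm(2L\pm 1)$ into the Bessel order. This yields $\lambda_L(\S^{disk})\approx 2p\kappa h^2\int_0^{\pi/2}\sin(t)\left[J_{2L+1}(2\kappa h\sin(t))-J_{2L-1}(2\kappa h\sin(t))\right]dt$; note the surviving weight $\sin(t)$, which your expected form $\int J_{2L\pm 1}(2\kappa h\sin t)\,dt$ drops (you are right that the printed statement is garbled, but the paper's own proof keeps this weight). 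If you want a route closer to your instinct, the correct recurrence is the derivative one, $2J_\nu'(z)=J_{\nu-1}(z)-J_{\nu+1}(z)$: since $F_{\S}^{disk}=-\frac{\kappa}{2}\,\partial_\kappa F_{\M}^{disk}$ exactly, differentiating the mass-matrix eigenvalue formula of Theorem \ref{thm:mass_spectrum} in $\kappa$ produces precisely the difference $J_{2L+1}-J_{2L-1}$ with the $\sin(t)$ weight. Indeed, the fact that the answer involves a \emph{difference} of adjacent orders, not a sum, is already a signal that the sum-type recurrence you chose is the wrong tool.
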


\begin{proof}
	
	The proofs for the spectra of $\S^{disk}$ and $\M^{disk}$ are similar, and we omit a few details.
	We see that for all $L=0,...,p-1$ (ignoring multiplicities) the eigenvalues of the cross matrix are
	\begin{align*}
		\lambda_L(\S^{disk}) &= \sum_{j=0}^{p-1} \kappa h^2 a(j) J_1(\kappa h a(j)){\rm e}^{-i\frac{2 j }{p}\pi L}
		\equiv \sum_{j=0}^{p-1} F_{\S}(\xi_j){\rm e}^{-i \xi_j L},
	\end{align*} 
	where we have defined the $2\pi$-periodic function \begin{align}
		F_{\S}(x)&:= \kappa h^2a(x)J_1(\kappa h^2 a(x))= 2\kappa h^2 \sin\left(\frac{x}{2}\right)J_1\left (2\kappa h \sin\left(\frac{x}{2} \right ) \right ). 
	\nonumber
	\end{align}
	Via the Trapezoidal rule, the discrete Fourier coefficient of $F_{\S}$ can be approximated by the true Fourier coefficient:
	\begin{align}
		\lambda_L(\S^{disk}) &\approx  \frac{p}{2\pi} \int_0^{2\pi} F_{\S}(x){\rm e}^{-i Lx}\, dx 
		= p\kappa h^2 \int_0^{2\pi}  \sin\left ( \frac{x}{2} \right )J_1\left (2\kappa h \sin\left(\frac{x}{2} \right )\right) {\rm e}^{i xL}\, dx.\nonumber
	\end{align}
	The eigenvalues of $\S^{disk}$ are real, and both $\sin$ and $J_1$ are odd functions. We obtain
	\begin{align*}
		\lambda_L(\S^{disk}) &\approx  p\kappa h^2 \int_0^{2\pi}  \sin\left(\frac{x}{2}\right )J_1\left (2\kappa h \sin\left (\frac{x}{2} \right ) \right) \cos (xL)\, dx\\
		&=p\kappa h^2 \int_0^{2\pi}  \sin\left(s\right)J_1\left (2\kappa h \sin(s)\right) \cos (2Ls)\, ds .\end{align*}
	Once again, we have eigenvalues $\lambda_L$ of multipliticity 2 for  $L=1,..., \left \lfloor \frac{p-1}{2} \right \rfloor.$ Using the odd behaviour of $J_1$
	\begin{align*}
		\lambda(\S^{disk})_L&\approx   p\kappa h^2\int_0^{2\pi}\sin(s)J_1(2\kappa h \sin(s)) \cos(2Ls)\, ds \\
		&=  \frac{1}{2}p\kappa h^2\int_0^{2\pi}\left[\sin\left((2L+1)s\right)+\sin\left((1-2L)s\right) \right]J_1(2\kappa h \sin(s)) \cos(2Ls)\, ds \\
		&\equiv Q_1+Q_2		
	\end{align*}
	We now use \eqref{BesselIntegral} and \eqref{eq:integralconvertp} to compute, for any integer $A$,
	\begin{align*}
		\int_0^{2\pi}e^{iAs}J_1(2\kappa h \sin(s))\, ds &=  \frac{1}{2\pi}\int_0^{2\pi} \int_0^{2\pi}
		{\rm e}^{iAs + i (2\kappa h \sin(s)\sin(t)) - it} \, ds\, dt\\
		&= \int_{0}^{2\pi}e^{-it} J_{-A}(2\kappa h \sin(t))\, dt\\
	\end{align*}	Therefore
	\begin{align*}
		Q_1+Q_2 &= \frac{1}{2}p \kappa h^2\int_0^{2\pi}\sin(-t) J_{-(2L+1)}(2\kappa h \sin(t))\, dt + \frac{1}{2}p \kappa h^2\int_0^{2\pi}\sin(-t) J_{-(1-2L)}(2\kappa h \sin(t))\, dt\\
		&= 2p\kappa h^2 \int_0^{\pi/2} \sin(t) \left[J_{2L+1}(2\kappa h \sin(t))- J_{2L-1}(2\kappa h \sin(t))\right]\, dt,
	\end{align*}
	which gives (2) and (3) in the statement of the theorem.
\end{proof}

We note that further manipulations using recurrance relations  give
\begin{align*}
	\lambda_L(\S^{disk})	&\approx 2p\kappa h^2 \int_0^{\pi/2} \sin(t) \left[J_{2L+1}(2\kappa h \sin(t))- J_{2L-1}(2\kappa h \sin(t))\right]\, dt,\\
	&= \frac{2p}{\kappa} \int_0^{2\kappa h} \frac{2\kappa h z}{\sqrt{4\kappa^2h^2-z^2}}(2J'_{2L}(z))\, dz\\
	&= -(2ph)\int_0^{2\kappa h} \left(\sqrt{4\kappa^2h^2-z^2}\right) J''_{2L}(z)\, dz.
\end{align*}
We remark that this approximation is less useful for computation, since most softwares use recurrance relations instead.

If we use the asymptotic behaviour for $J_{p}(z)$ for large order, and $0<z\ll p$, we can estimate
\begin{align*} \lambda_{p/2}(\S^{disk}) 
	&\leq p\kappa h^2  [J_{p+1}(2\kappa h) - J_{p-1}(2\kappa h ]\\
	& \sim  p\kappa h^2 \left [\frac{(\kappa h)^{p+1}}{\Gamma(p+2)} - \frac{(\kappa h)^{p-1}}{\Gamma(p)} \right ]
\end{align*} In practice, this asymptotic expression should be used with care in finite-precision computations, since it involves subtraction of nearly equal (and very small) numbers. 

From this, the spectral condition number clearly becomes large with the number of plane waves; the ill-conditioning sets in rapidly if $\kappa h<1$.
\begin{align*}
	{\rm Cond} \left( \S^{disk}\right )  &= \left \vert \frac{\lambda_{max}}{\lambda_{min}}\right\vert \approx  \left\vert\frac{\int_0^{2\pi}\sin(s)J_1(2\kappa h \sin(t))\, dt}{|\lambda_p(\S^{disk})|}\right\vert\\ &\sim C\,\left(\frac{\Gamma(p-1)}{p\kappa h^2} (\kappa h)^{1-p}\right).	
	\label{eq:CondsDisk} 
\end{align*} 

The stiffness matrix spectrum can be characterized in a very similar way.
\begin{theorem}\label{thm:stiffness}
	Let $K$ be a disk of radius $h$ centered at the origin, and consider the stiffness matrices in (\ref{eq:stiff}) resulting from  $p$ evenly spaced plane waves. Then, the following hold for $L=0,...,p-1$:		
	
	\begin{enumerate}
		\item The generating function of the stiffness matrix is $$F_{\D}^{disk}:=\pi h(\kappa h)^2 \left[\cos(x)J_0\left(2\kappa h \sin\left(\frac{x}{2} \right ) \right) + J_2\left(2\kappa h \sin\left(\frac{x}{2}\right ) \right )\right].$$
		\item There is a simple eigenvalue
		$$ 	\lambda_0(\D^{disk})\approx\frac{1}{2}p \kappa^2h^3\int_0^{2pi}\pi \left[J_0\left (2\kappa h \sin\left (\frac{x}{2} \right )\right )\cos(x)  + J_2\left(2\kappa h \sin\left(\frac{x}{2}\right)\right)\right]\, dx. \\ \nonumber$$
		\item The multiplicity-2 eigenvalues of the stiffness matrix $\D^{disk}$ are given by 
		\begin{align}
			\lambda_L(\D^{disk})&= \sum_{j=0}^{p-1}F_{\D}^{disk}\left (\frac{2j}{p} \pi \right ) {\rm e}^{-i \frac{2j}{p}L \pi}  \qquad L=1,...,\left\lfloor \frac{p-1}{2} \right\rfloor \nonumber\\
			& \approx  \frac{1}{4}p \kappa^2 h^3 \int_0^{2\pi}\left[J_{2L+2}(2\kappa h\sin(t)) + J_{2L-2} (2\kappa h\sin(t))\right]+2J_{2L}(2\kappa h\sin(t)) \cos(2t)\, dt. \nonumber
		\end{align}
		\item If $p$ is even then there is an additional simple eigenvalue
		\begin{align*}
			\lambda_{p/2}(\D^{disk}) = \sum_{j=0}^{p-1} (-1)^jF_{\D}^{disk}\left (\frac{2j}{p}\pi \right ). 
		\end{align*}		
	\end{enumerate}
\end{theorem}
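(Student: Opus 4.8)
The plan is to follow the same template used for $\M^{disk}$ and $\S^{disk}$ in Theorems \ref{thm:mass_spectrum} and \ref{thm:cross_spectrum}, which is available because Theorem \ref{thm2} already guarantees $\D^{disk}$ is real, symmetric and circulant. First I would read the generating function off the entry formula \eqref{eq:Ddiskentry} by inspection: writing $n=m-\ell$ and recalling that $a(n)=2\sin(\pi n/p)$, the $(m,\ell)$ entry depends only on $\xi_j=\frac{2\pi}{p}j$ through $\sin(x/2)$ and $\cos(x)$, so the substitution $x=\xi_j$ identifies
\[
F_{\D}^{disk}(x)=\pi h(\kappa h)^2\Bigl[\cos(x)\,J_0\bigl(2\kappa h\sin(x/2)\bigr)+J_2\bigl(2\kappa h\sin(x/2)\bigr)\Bigr],
\]
which is statement (1). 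This step is purely algebraic.

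Next, from the circulant eigenvalue formula \eqref{eq:eigofC} the eigenvalues are the discrete Fourier coefficients $\lambda_L(\D^{disk})=\sum_{j=0}^{p-1}F_{\D}^{disk}(\xi_j)\,e^{-iL\xi_j}$, and the trapezoidal rule (supergeometrically accurate because $F_{\D}^{disk}$ is analytic and $2\pi$-periodic, exactly as argued for $\M^{disk}$) replaces these by the true Fourier coefficients $\frac{p}{2\pi}\int_0^{2\pi}F_{\D}^{disk}(x)e^{-iLx}\,dx$. Since $\D^{disk}$ is real symmetric, the eigenvalues are real and I may replace $e^{-iLx}$ by $\cos(Lx)$ throughout. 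Setting $L=0$ and integrating term by term gives statement (2); for $p$ even, the alternating-sum formula for $\lambda_{p/2}$ in statement (4) is immediate from \eqref{eq:eigofC} with $\theta_{p/2}=\pi$.

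The substantive work is statement (3), the multiplicity-two eigenvalues. Here I would split the generating function into its two Bessel pieces and convert each integral using Lemma \ref{lemma:bessel-coeffs}. For the $J_2$ piece I apply the lemma with $M=1$, giving $\int_0^{2\pi}J_2(2\kappa h\sin(x/2))e^{-iLx}\,dx=\int_0^{2\pi}J_{2L}(2\kappa h\sin t)e^{-2it}\,dt$, whose real part contributes the $2J_{2L}(2\kappa h\sin t)\cos(2t)$ term. For the $\cos(x)J_0$ piece I first write $\cos(x)=\tfrac12(e^{ix}+e^{-ix})$ to shift the Fourier index by $\pm1$, and then apply the lemma with $M=0$ at the shifted indices $L-1$ and $L+1$, which collapses the right-hand sides to $\int_0^{2\pi}J_{2L-2}(2\kappa h\sin t)\,dt$ and $\int_0^{2\pi}J_{2L+2}(2\kappa h\sin t)\,dt$. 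Assembling the pieces and carrying the prefactor $\frac{p}{2\pi}\cdot\pi h(\kappa h)^2$ produces exactly
\[
\lambda_L(\D^{disk})\approx\frac{p\kappa^2h^3}{4}\int_0^{2\pi}\Bigl[J_{2L+2}(2\kappa h\sin t)+J_{2L-2}(2\kappa h\sin t)+2J_{2L}(2\kappa h\sin t)\cos(2t)\Bigr]\,dt.
\]

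The main obstacle, as in the cross-matrix proof, is the careful Bessel index bookkeeping: the extra $\cos(x)$ factor in $F_{\D}^{disk}$ shifts the Fourier index and forces me to invoke Lemma \ref{lemma:bessel-coeffs} at the neighbouring indices $L-1$ and $L+1$, and I must track which terms are even or odd so that taking real parts (justified by the reality of the spectrum) retains the correct $\cos(2t)$ weight on the $J_{2L}$ term while discarding vanishing imaginary parts. The only genuine verification needed is that the form of Lemma \ref{lemma:bessel-coeffs} I am invoking here, with first Bessel index $2M\in\{0,2\}$ and exponential $e^{-iLx}$ for arbitrary integer $L$, is precisely the identity established there; everything else is routine evaluation.
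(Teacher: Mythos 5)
Your proposal is correct and follows essentially the same route as the paper's proof: read the generating function off \eqref{eq:Ddiskentry}, use the circulant/trapezoidal-rule argument to pass to true Fourier coefficients, handle the $J_2$ piece via Lemma \ref{lemma:bessel-coeffs} with $M=1$ (retaining the $\cos(2t)$ weight by reality of the spectrum), and handle the $\cos(x)J_0$ piece by reducing it to the lemma with $M=0$ at shifted indices. The only cosmetic difference is that you shift the Fourier index by writing $\cos(x)=\frac{1}{2}\left(e^{ix}+e^{-ix}\right)$ before invoking the lemma, whereas the paper substitutes $x=2s$ first and then uses the product-to-sum identity $\cos(2s)\cos(2Ls)=\frac{1}{2}\left[\cos((2L+2)s)+\cos((2L-2)s)\right]$ together with \eqref{eq:integralconvertp}; these are the same computation in different clothing and both land on the coefficient $\frac{1}{4}p\kappa^2h^3$ in statement (3).
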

\begin{proof}
	It immediately follows that
	$$\lambda_0(\D^{disk}) \approx \frac{1}{2}p\kappa^2 h^3\int_0^{2\pi}  \left[\cos(x)J_0\left (2\kappa h \sin \left(\frac{x}{2} \right) \right) + J_2\left(2\kappa h \sin \left(\frac{x}{2} \right ) \right )\right]\, dx, \nonumber. $$
	Next, there are real eigenvalues $\lambda_L(\D^{disk}), \, L= 1,...,\left \lfloor \frac{p-1}{2} \right \rfloor$ with multiplicity 2. 
	\begin{align}
		\lambda_L(\D^{disk})
		&\approx \frac{1}{2}p\kappa^2 h^3\int_0^{2\pi}\pi  \left[\cos(x)J_0\left (2\kappa h \sin\left(\frac{x}{2} \right)\right) + J_2\left (2\kappa h \sin\left(\frac{x}{2}\right )\right)\right]{\rm e}^{iLx}\, dx, \nonumber \\
		&= \frac{1}{2}p\kappa^2 h^3 \int_0^{2\pi}J_{2L}(2\kappa h\sin(t)) {\rm e}^{-i2t}\, dt + p\kappa^2 h^3 \int_0^{2\pi}J_0(2\kappa h \sin(s))\cos(2s)\cos(2Ls)\, ds\nonumber 
	\end{align}
	
	We can rewrite the second integral using the sum of cosines and \eqref{eq:integralconvertp}
	\begin{align}
		\frac{1}{2}	p\kappa^2 h^3 \int_0^{2\pi}J_0(2\kappa h \sin(s))\cos(2s)\cos(2Ls)\, ds  \hspace{-1.4in} && \nonumber \\
		&=\frac{1}{4} p \kappa^2 h^3 \int_0^{2\pi}J_{2L+2}(2\kappa h\sin(t)) + J_{2L-2} (2\kappa h\sin(t)) \, \, dt, \nonumber 
	\end{align} leading to
	\begin{align*}
		\lambda_L(\S^{disk})	\approx 	\frac{1}{4}p\kappa^2 h^3 \int_0^{2\pi} 2J_{2L}(2\kappa h\sin(t))\cos(2t) +J_{2L+2}(2\kappa h\sin(t)) + J_{2L-2} (2\kappa h\sin(t))  \, ds, 
	\end{align*} for $\,\, L=1,...,\left \lfloor \frac{p-1}{2} \right \rfloor.$
	Finally if $p$ is even
	\begin{align*}
		\lambda_{p/2}(\S^{disk}) \approx \frac{1}{4} p\kappa^2 h^3 \int_0^{2\pi} 2J_{p}(2\kappa h\sin(t))\cos(2t)+ [J_{p+2}(2\kappa \sin(t))+ J_{p-2}(2\kappa h\sin(t))]\, dt,
	\end{align*}
	proving the theorem.
\end{proof}
One could use recurrence relations to rewrite the expressions in terms of derivatives of Bessel functions. 

As the number of plane waves $p$  grows, the spectral condition number of the stiffness matrix will grow as
$$ {\tt Cond}(\D^{disk}) =  C(\lambda_{p/2})^{-1} \approx C (p+2)! (\kappa h)^{-2-p}.$$

%

\section{PWB on  polygons}
The mass and systems matrices on the disk are circulant, and therefore their properties could be described analytically.   We next study the behaviour of the mass matrix on  cyclic polygons, and notice that they are close to Toeplitz. This suggests the use of circulant preconditioners. In this section we identify two simple-to-construct circulant matrices which are close to the mass matrix on a polygon.

\subsection{Cyclic polygons}
Consider the situation of a single element $K$ consisting of a {\it cyclic} polygon with $L$ sides. The vertices of an $L$-sided cyclic polygon  are given by 
\begin{equation}
	\x_j=h(\cos(\theta_j),\sin(\theta_j))
\end{equation}
where $0 \leq \theta_0 < \theta_1 < \cdots < \theta_{L-1}< 2 \pi.$  Note if the angles $\theta_i$ are equally spaced then the polygon is regular.

In what follows we assume the plane wave directions are evenly spaced as in (\ref{eq:directions}). The entries of the mass matrix will be given as $$\,[\\M^{reg}]_{m,\ell}:=\int_{\partial K} \varphi_m\overline{\varphi_{\ell}} ds.$$ The conditioning of this matrix deteriorates both as the number of plane wave directions increases, and if $\kappa h$ becomes small. 

We expect the mass matrix on the disk and  the mass matrix on a $L-sided$ polygon to be close for large $L$, for a fixed number of PWB $p$ and a fixed wavenumber $\kappa$. This is illustrated in \autoref{fig:fig4} 
The explicit expression derived in \cref{thm:mass_spectrum} for the eigenvalues of the mass matrix on the disk initially provides an excellent estimate (see \cref{fig:circle-conditioning}), and the conditioning of $\M^{disk}$ is governed by the smallest eigenvalue $\lambda_{p/2}(\M^{disk})$. The comparison breaks down when the computed condition number of the matrices on the polygon exceeds $10^{20}$. This suggests that we may use the (computable) quantities in \eqref{eq:CondMDisk} as a heuristic guide to decide how many plane wave directions to use for a given problem and mesh size.

\begin{figure}
	\centering
	$\begin{array}{cc}
	\includegraphics[width=0.47\linewidth]{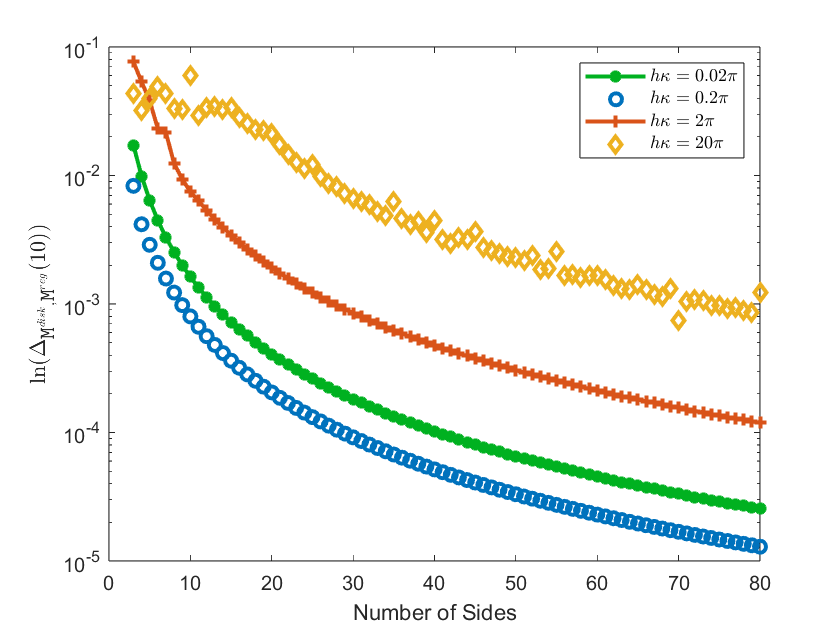} &
	\includegraphics[width=0.47\linewidth]{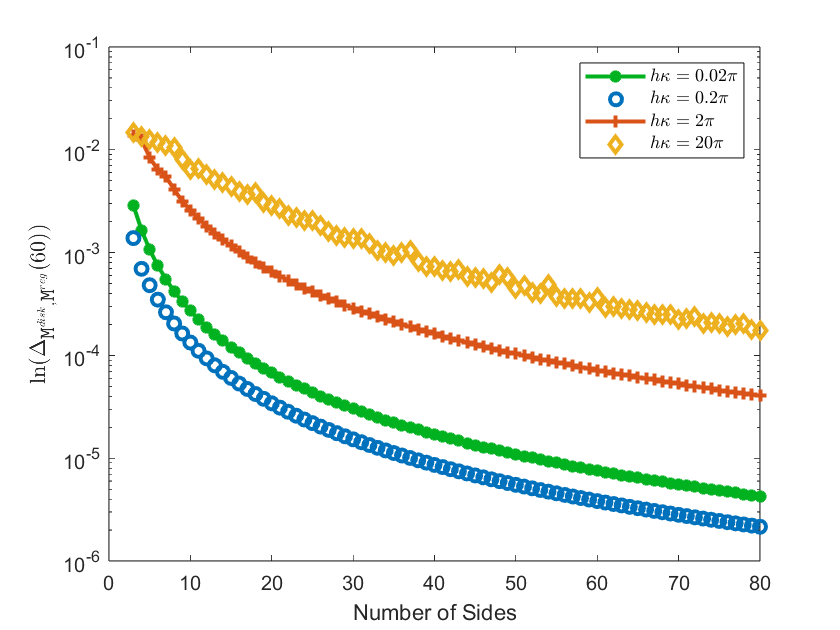} \nonumber \\
	(a) \, \, \,  p = 10 &  (b) \, \, \, p = 60 \nonumber
	\end{array}$
	\caption{Comparison of $\M^{disk}$ and $\M^{reg}$ on a $L$-sided regular polygon as $L$ increases.  Shown is the natural log of $\Delta_{\texttt{M}^{disk},\texttt{M}^{reg}}(L)$  for $p = 10$ (a) and $p = 60$ (b) plane waves. In each case, results are shown for $\kappa h = 0.02 \pi, 0.2 \pi, 2 \pi,$ and $20 \pi.$ }
	\label{fig:fig4}
\end{figure}


In order to facilitate the proceeding analysis, we parametrize the edge, $\Gamma_j,$ between subsequent 
vertices, $x_j$ and $x_{j+1},$ in terms of the angle  parameter $t$. That is, we write
\begin{equation}
	\x(t) = r(t) \langle \cos(t), \sin(t)\rangle, \qquad  \theta_j \leq t \leq \theta_{j+1}
\end{equation}
where
\begin{equation}
	r(t) = h \cos \left (\frac{\theta_{j+1}-\theta_j}{2} \right )\sec \left (\frac{\theta_{j+1}+\theta_j}{2}-t \right ).
\end{equation} 
The element of arclength on the edge between $x_j$ and $x_{j+1}$ is given by 
\begin{eqnarray}
	g_j(t) &=& \left | \frac{d}{dt}\x(t) \right |=  \left | \langle r^{\prime}(t) \cos(t) - r(t)\sin(t), r^{\prime}(t) \sin(t) + r(t) \cos(t) \rangle \right | \nonumber \\
	&=& h \left |\cos\left (\frac{\theta_{j+1}-\theta_j}{2} \right ) \right | \sec^2 \left (\frac{\theta_{j+1}+\theta_j}{2}-t \right ). \label{eq:g}
\end{eqnarray}
Consequently, 
\begin{equation*}
	\int_{\gamma_j} \varphi_m \overline{\varphi_{\ell}}ds = \int_{\theta_j}^{\theta_{j+1}} e^{i \kappa |r(t)| a(m-\ell )b(m+\ell,t) } g_j(t)ds 
\end{equation*}
and adding contributions over all edges yields
\begin{equation}\label{eq:regular_poly_M}
	[\M^{cyclic}]_{m\ell} =  \sum_{j=0}^{L-1} \int_{\theta_j}^{\theta_{j+1}} e^{i \kappa |r(t)| a(m-\ell )b(m+\ell,t) } g_j(t)dt.
\end{equation}
Note that $\M^{cyclic}$ is Hermitian. 
Unlike the case when $K$ is a disk, the mass matrix on a cyclic polygon is not circulant.
\begin{lemma}
	The mass matrix on the cyclic polygon is {\it not} Toeplitz, and hence not circulant. We show the details for the case of a regular polygon, and the situation on a general cyclic polygon follows a similar computation. \end{lemma}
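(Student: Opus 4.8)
The plan is to show that the entry $[\M^{cyclic}]_{m,\ell}$ fails to depend only on the difference $m-\ell$, which is precisely the defining property a Toeplitz matrix must satisfy. Concretely, I would exhibit two index pairs $(m,\ell)$ and $(m',\ell')$ with $m-\ell = m'-\ell'$ but $[\M^{cyclic}]_{m,\ell} \neq [\M^{cyclic}]_{m',\ell'}$. Recall from \eqref{eq:regular_poly_M} that each entry is a sum over edges of integrals whose integrand is $e^{i\kappa|r(t)|a(m-\ell)b(m+\ell,t)}g_j(t)$. The crucial observation is that while the factor $a(m-\ell)$ depends only on the difference, the factor $b(m+\ell,t)=\sin(t-\tfrac{\theta_{m+\ell}}{2})$ depends on the \emph{sum} $m+\ell$, and moreover $|r(t)|$ is not constant on the polygon (it equals $h$ only at the vertices). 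On the disk this sum-dependence was harmless because $|r(t)|\equiv h$ allowed a change of variables $t\mapsto t-\tfrac{\theta_{m+\ell}}{2}$ that absorbed the shift and returned a clean Bessel integral independent of $m+\ell$; see the proof of \cref{thm:disk_entry}. On the polygon this shift interacts nontrivially with the non-constant radius $|r(t)|$ and the piecewise arclength $g_j(t)$, so the shift cannot be absorbed.

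First I would fix the regular $L$-gon (equally spaced $\theta_j$) as the paper suggests, and specialize to a small concrete case — say $p$ fixed and the pair $(m,\ell)=(1,0)$ versus $(m',\ell')=(2,1)$, both giving difference $1$ but sums $1$ and $3$ respectively. For each I would write the corresponding entry via \eqref{eq:regular_poly_M} and note that the two expressions differ only through the phase $b(m+\ell,t)$, i.e.\ through replacing $\sin(t-\tfrac{\pi}{p})$ by $\sin(t-\tfrac{3\pi}{p})$ inside the exponential, while $a(1)$ is common to both. The heart of the argument is then to verify that the resulting integrals genuinely disagree.

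The cleanest way to make the inequality rigorous is to compare the entries via their values or a symmetry/parity argument rather than attempting to evaluate the integrals in closed form. One route: observe that on the disk the phase shift in $b$ corresponds to a rigid rotation of the integration contour, which is a symmetry of the circle but \emph{not} of the polygon, since rotating by an angle that is not a multiple of $\tfrac{2\pi}{L}$ maps the polygon's boundary to a different set. Hence the integral $\int_{\partial K}$ is not invariant under the substitution that would be needed to equate the two entries. Alternatively, and perhaps most transparently for a written proof, I would simply record that a direct numerical evaluation for a specific $L$, $p$, $\kappa h$, and the two chosen index pairs yields distinct values, which suffices to refute the Toeplitz property since a single counterexample breaks the constant-diagonal structure.

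I expect the main obstacle to be making the inequality \emph{provably} strict rather than merely generically true: one must rule out accidental coincidences where the two integrals happen to agree for special values of $L$, $p$, or $\kappa h$. The safest remedy is to phrase the conclusion as holding for generic parameters (or for at least one explicit choice), and to support it with the rotation-invariance argument above — the circle's continuous rotational symmetry is exactly what produced the difference-only dependence in \cref{thm:disk_entry}, and the polygon possesses only the discrete subgroup of rotations by multiples of $\tfrac{2\pi}{L}$, which is insufficient to force Toeplitz structure for the PWB directions spaced by $\tfrac{2\pi}{p}$ when $p \neq L$. Stating the result for a representative case, while indicating that the general cyclic polygon follows by the identical computation with unequal $\theta_j$, is the natural way to close the lemma.
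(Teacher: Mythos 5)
Your proposal follows essentially the same route as the paper: the paper compares $[\M^{reg}]_{m+1,\ell+1}$ with $[\M^{reg}]_{m,\ell}$ (same difference $m-\ell$, sums differing by $2$) and performs the change of variables $t\mapsto s+\tfrac{2\pi}{p}$, which is exactly your rotation argument — the shift absorbs the sum-dependence in $b(m+\ell,t)$ but displaces the arguments of $|r(\cdot)|$ and $g_j(\cdot)$, which are not invariant under rotation by $\tfrac{2\pi}{p}$. Your explicit concern about making the final inequality provably strict (ruling out accidental coincidences) is fair, and in fact the paper's own proof shares this gap, simply asserting $\not=$ after the substitution.
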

\begin{proof} We easily see $\M^{reg}_{m+1,\ell+1}\not= \M^{reg}_{m,\ell}$:
	\begin{eqnarray*}
		[\M^{reg}]_{m+1,\ell+1} &=&  \sum_{j=0}^{L-1} \int_{\theta_j}^{\theta_{j+1}} {\rm e}^{i \kappa |r(t)| a(m-\ell )b(m+\ell+2,t) } g_j(t)dt\\
		&=&\sum_{j=0}^{L-1} \int_{\theta_j}^{\theta_{j+1}} {\rm e}^{i \kappa |r(t)| a(m-\ell )\sin(t - \frac{m+\ell}{p}\pi +\frac{2\pi}{p}) } g_j(t)dt\\
		&=&\sum_{j=0}^{L-1} \int_{\theta_j+\frac{2\pi}{p}}^{\theta_{j+1}+\frac{2\pi}{p}} {\rm e}^{i \kappa |r(s+\frac{2\pi}{p})| a(m-\ell )\sin( s- \frac{m+\ell}{p}\pi ) } g_j\left (s+\frac{2\pi}{p} \right ) \,ds\\
		&\not=& [\M^{reg}]_{m,\ell}.
	\end{eqnarray*}
This shows that the diagonals of $\M^{reg}$ are not constant.\end{proof}
Nonetheless, we observe that by letting $p\rightarrow + \infty$, the mass matrix on the polygon becomes Toeplitz. 

We aim to quantify how 'far' from Toeplitz the mass matrix $\M^{reg}$ is.To this end, we use the average of the diagonals of $\M^{reg}$ to  construct a Toeplitz matrix $T^{reg}$. That is, define
\begin{equation*}
	T^{reg}_{ij} := \overline{m}_{i-j}, \quad \text{where} \quad 	\overline{m}_k := \frac{1}{p - |k|} \sum_{i-j = k} \M^{reg}_{i,j}, \quad \text{for}\,  k = 1-p,...,p-1.
\end{equation*}
A measure of how close to Toeplitz the mass matrix $M^{reg}$ is depicted in Figure \ref{Fig:departurefromToeplitz}. Here we use the relative measure
\begin{equation}\label{eq:deltatoeplitz}
	\Delta_{A,B}(p):=\|A - B \|/(p\|B\|).
\end{equation}

As we see, as the number of plane waves increases, $\Delta_{\M^{reg},\T^{reg}}(p)$ decreases. As the frequency $\kappa$   increases, so does the departure from Toeplitz.

\begin{figure}
	\centering
	$\begin{array}{ccc}
		\includegraphics[width=0.3\linewidth]{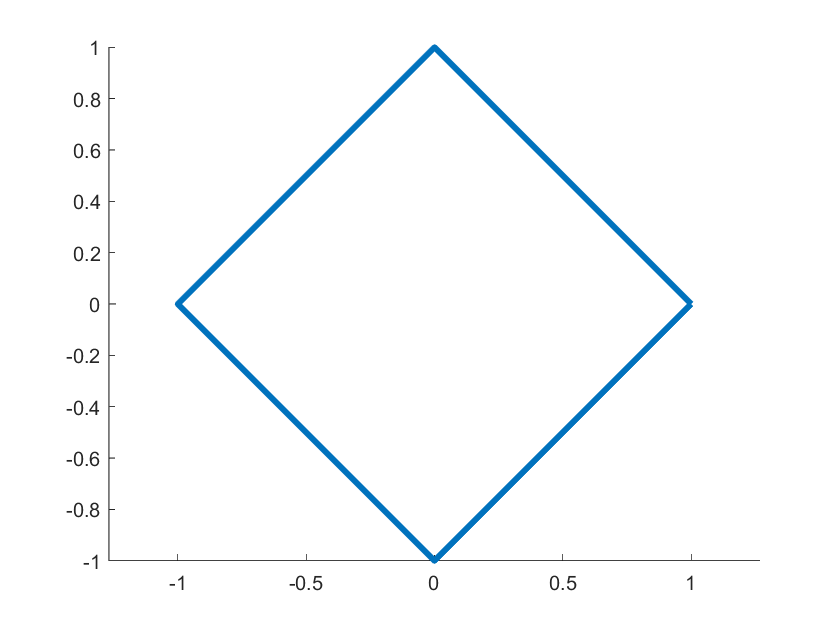} & 	\includegraphics[width=0.3\linewidth]{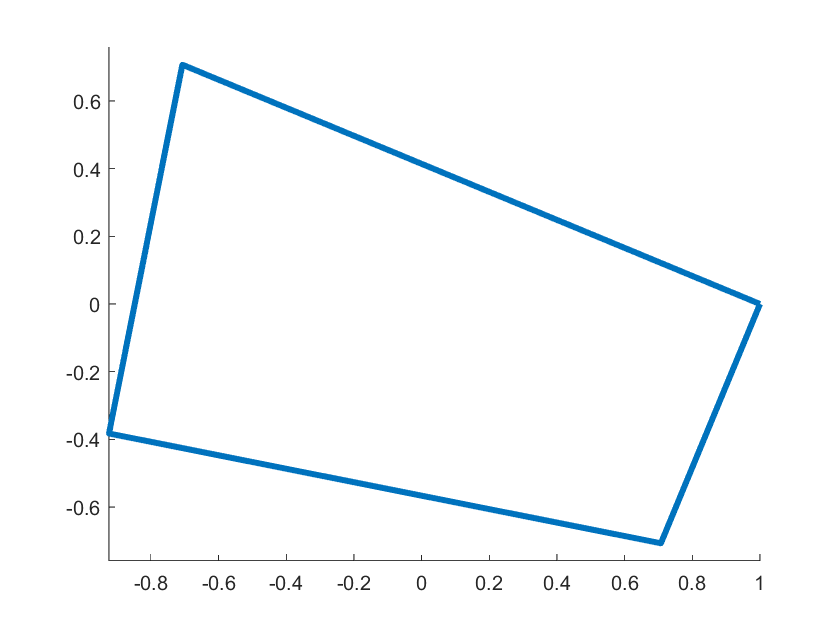}&
		\includegraphics[width=0.3\linewidth]{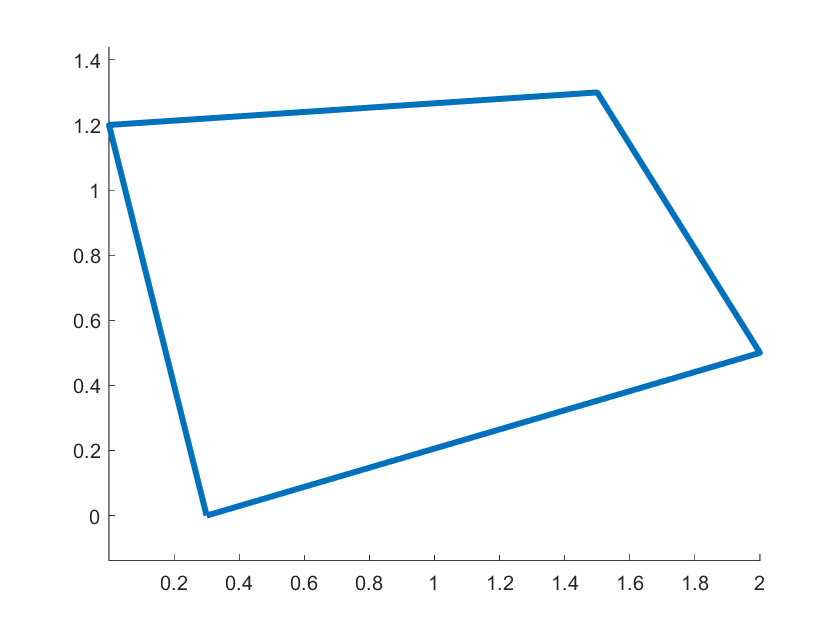}
		 \nonumber \\	
			(a) \, \, \,  \mbox{Regular Polygon} &  (b) \, \, \, \mbox{Cyclic Polygon} & \, \, \, (c) \, \, \, \mbox{Polygon General}\nonumber \\
		\includegraphics[width=0.3\linewidth]{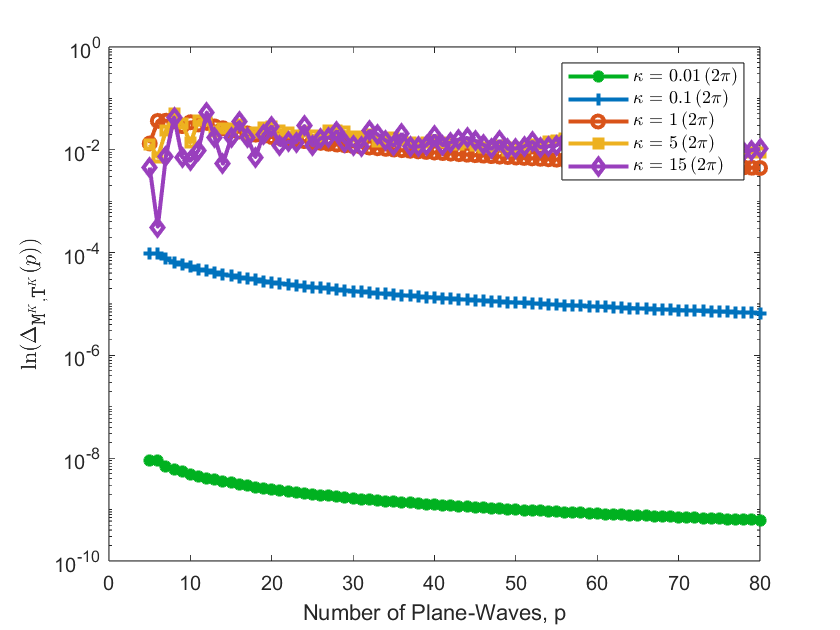}&
		\includegraphics[width=0.3\linewidth]{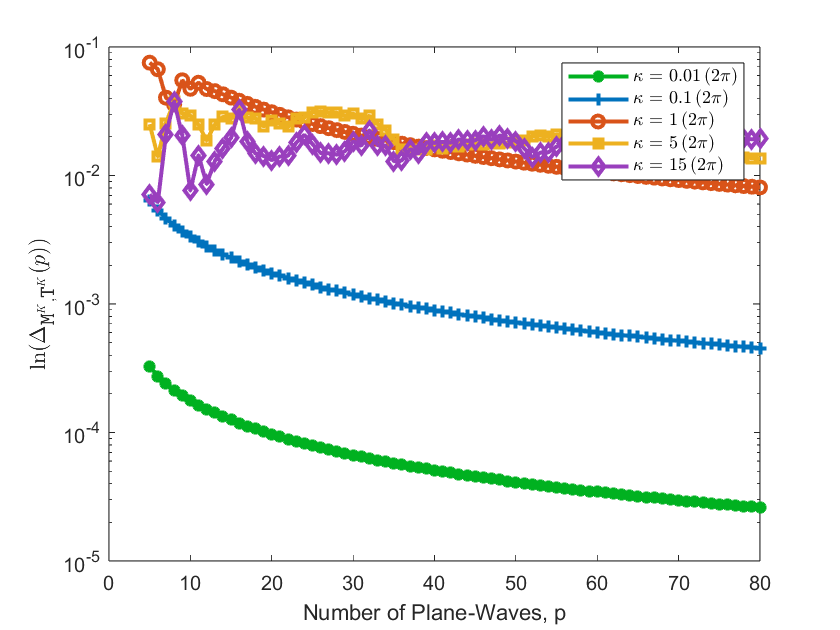} & 
		\includegraphics[width=0.3\linewidth]{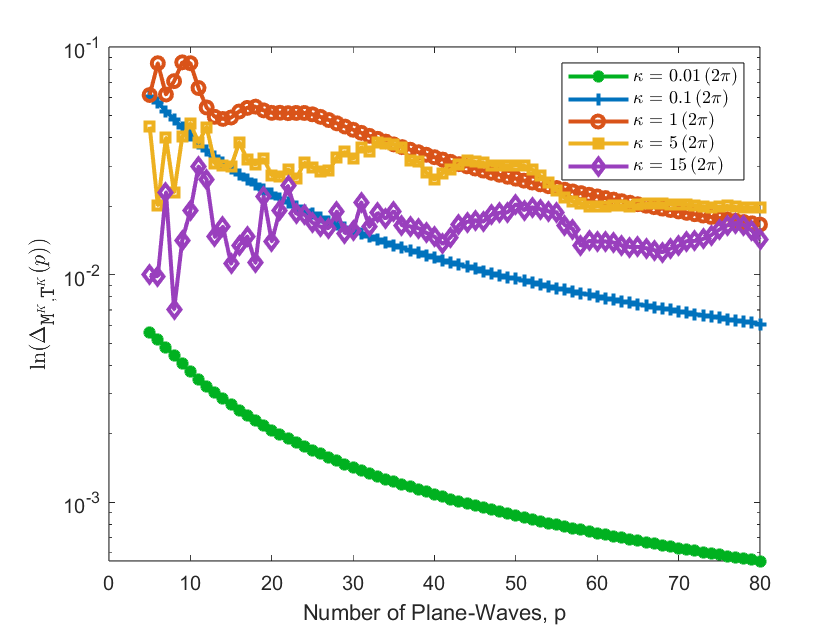} \nonumber \\	
			(d)  &  (e) & (f) \nonumber \\	
	
	\end{array}$\label{Fig:departurefromToeplitz}
	\caption{The mass matrices $M^{K}$ on polygonal elements $K$ related to $\T^{K}.$  The three polygons we consider are regular $\left \{\theta = m\frac{\pi}{2}, m = 0,1,2,3; \, h = 1 \right \} $ in (a), cyclic $\left (\theta = 0,\frac{3 \pi}{4},\frac{9 \pi}{8},\frac{7 \pi}{4}; h = 1 \right )$ in (b), and the general polygon $\left \{ \mbox{vertices at } (0.3,0), (2,0.5), (1.5,1.3), \mbox{ and } (0,1.2) \right \}$ in (c). The plots of   $\ln \left ( \Delta_{\M^{K}, \T^{K}}(p) \right )$ as a function of the number of plane-waves, $p,$ for $\kappa = 0.01 (2\pi), 0.1 (2\pi), 1 (2\pi), 5 (2 \pi),$ and $15 (2\pi)$ below each associated shape in (d), (e), and (f), respectively.  }
\end{figure} 

We see from these experiments that the mass matrix is {\it close} to Toeplitz. This motivates the question: can we use a circulant preconditioner for $\M^{reg}$, using the knowledge that such preconditioners are effective on Toeplitz matrices\cite{DIBENEDETTO199335,Gray,KARNER2003301,chan}? If so, how do we design such a preconditioner?

\subsection{Preconditioning on a polygonal element}
Circulant preconditioners are attractive for their ease of application. Ideally, one seeks a circular matrix $\C$ so that $\C^{-1} \SyS$ is nearly the identity matrix; the inverse $\C^{-1} = \U^*\D^{-1}\U$ where $\U$ is unitary and $\D$ has the (easily computable) eigenvalues of $\C$. 

Motivated by these ideas, we seek simple choices of circulant preconditioners which have desirable properties. One candidate choice may be constructed as follows: use the first row of the mass matrix on a polygon $\M^{K}$ to generate a circulant matrix, $\C_R^{K}$. That is, define
\begin{equation}
	\C_R^{K} = \text{circ}[
		M_{11},M_{12},M_{13},...,M_{1,p}]	\label{eq:rcirc}
\end{equation}
This matrix is circulant, and has the eigenvalue decomposition $\C_R^{K}:=\U\Lambda(\C^{K}_{R}) \U^*$ where $\Lambda(\C^K_R)$ is a diagonal matrix containing the eigenvalues of $\C_R^{K}$. However, $\C_R^{K}$ is {\it not} Hermitian. This can be seen by considering $K$ to be a regular polygon:
$$ [\M^{reg}]_{12}=  \sum_{j=0}^{L-1} \int_{\theta_j}^{\theta_{j+1}} e^{i \kappa |r(t)| a(1 )b(3,t) } g_j(t)dt \not=\overline{[\M^{reg}]_{1,p}},$$
The eigenvalues of $\C^K_R$ may be therefore be complex. This need not be problematic, particularly if $\C_R^{K}$ is better conditioned than $\M^{K}$; since it is easy to invert, the application of $\C^{K}_R$ can be effective in reducing the error in an iterative solution.

	We define the so-called {\it best circulant matrix} $\C_B^{K}$ given by 
	\begin{equation}
		[\C^K_B]_{i, \,j} = \frac{1}{p} \sum_{m-n = i-j \, \, mod(p)} [\M^K]_{m n} \label{eq:bcirc}
	\end{equation}
	where recall that $p$ is the number of planewaves.
	
	\begin{figure}
		\centering
		$\begin{array}{ccc}
			\includegraphics[width=0.3\linewidth]{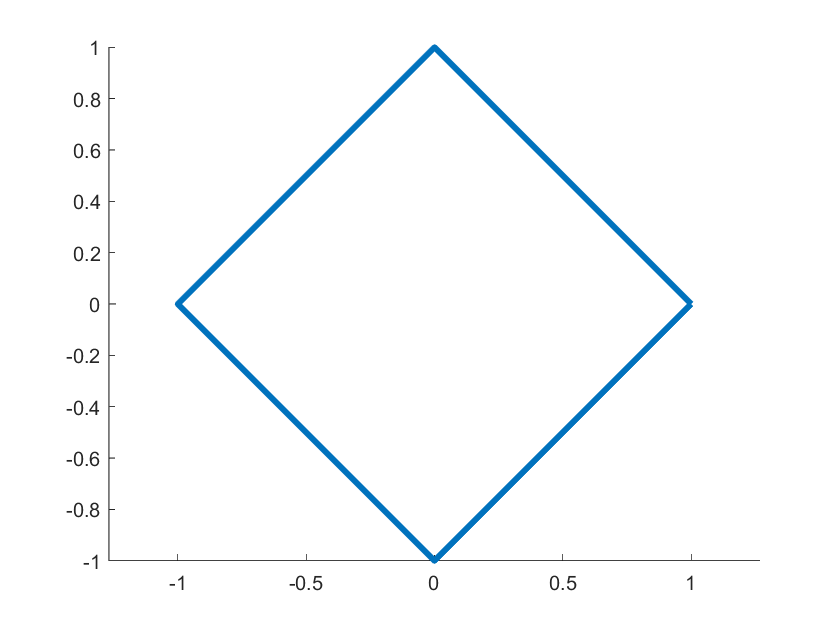} & 	\includegraphics[width=0.3\linewidth]{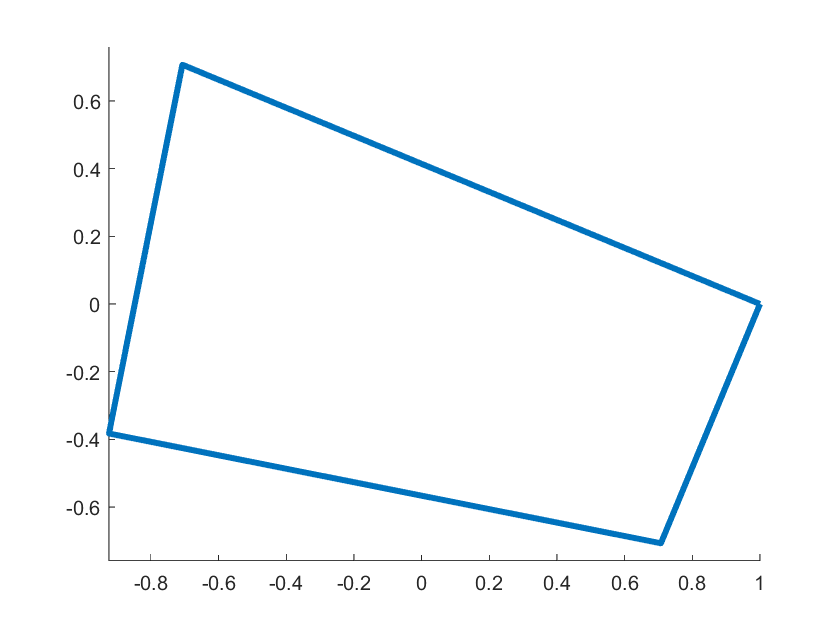}&
			\includegraphics[width=0.3\linewidth]{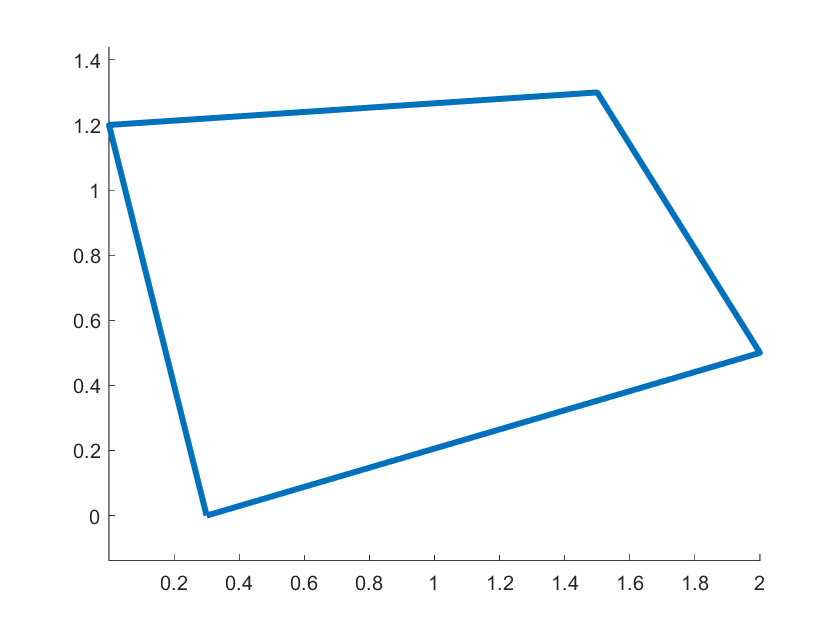} \nonumber \\
			(a) \, \, \,  \mbox{Regular Polygon} &  (b) \, \, \, \mbox{Cyclic Polygon} & \, \, \, (c) \, \, \, \mbox{Polygon General}\nonumber \\
			\includegraphics[width=0.35\linewidth]{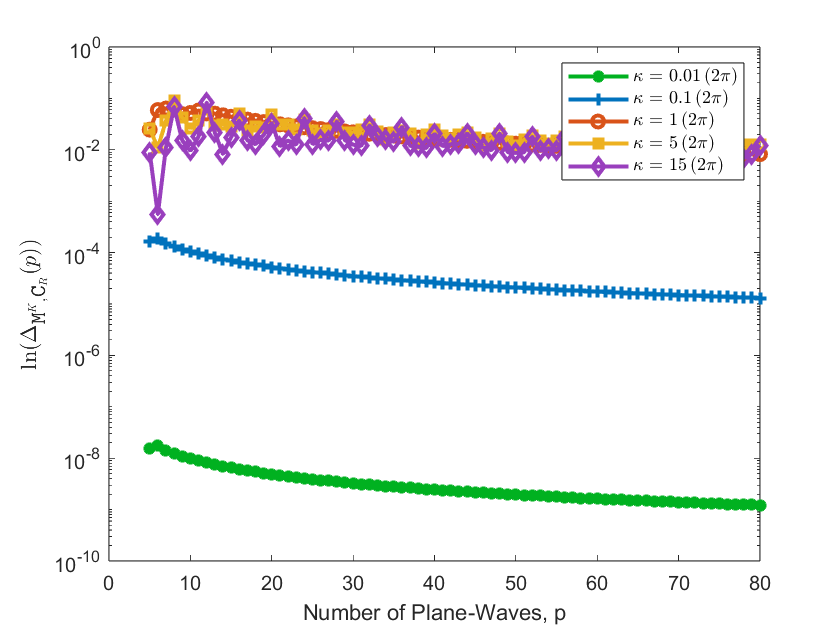}&
			\includegraphics[width=0.35\linewidth]{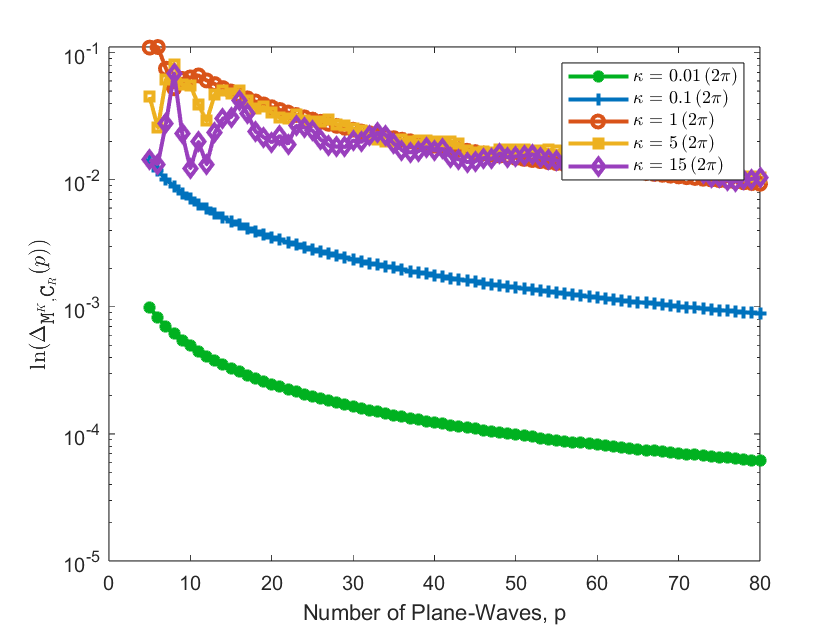}&
			\includegraphics[width=0.35\linewidth]{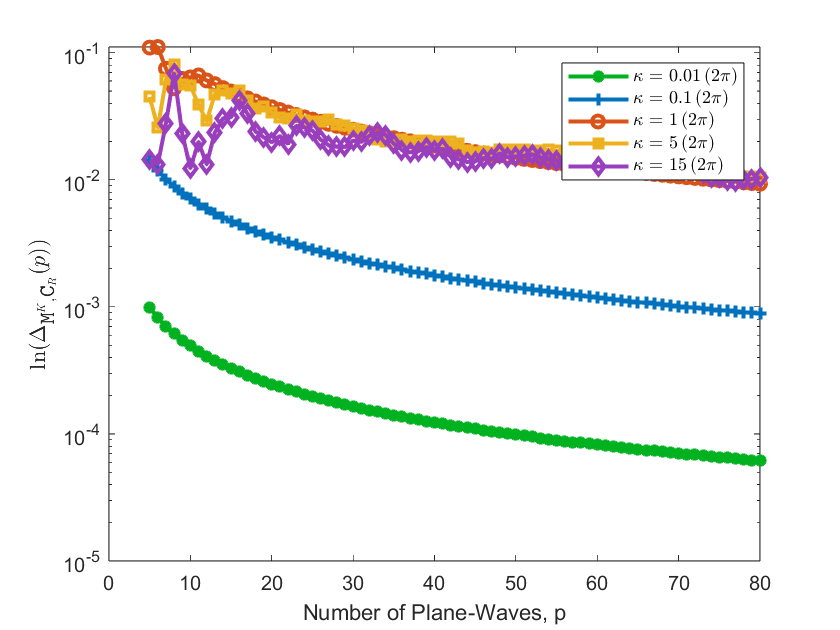} \nonumber \\
			(d) & (e) & (f) \nonumber \\
			\includegraphics[width=0.35\linewidth]{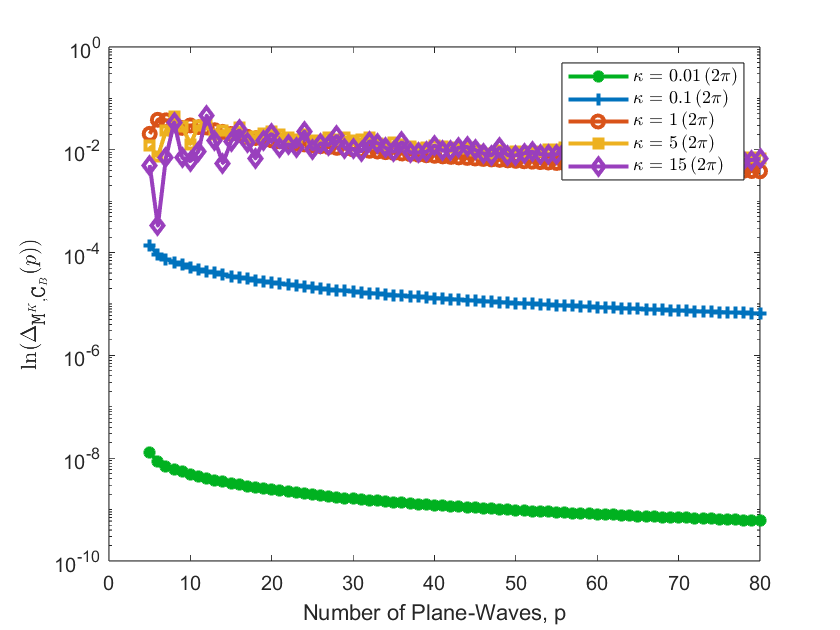}&	\includegraphics[width=0.35\linewidth]{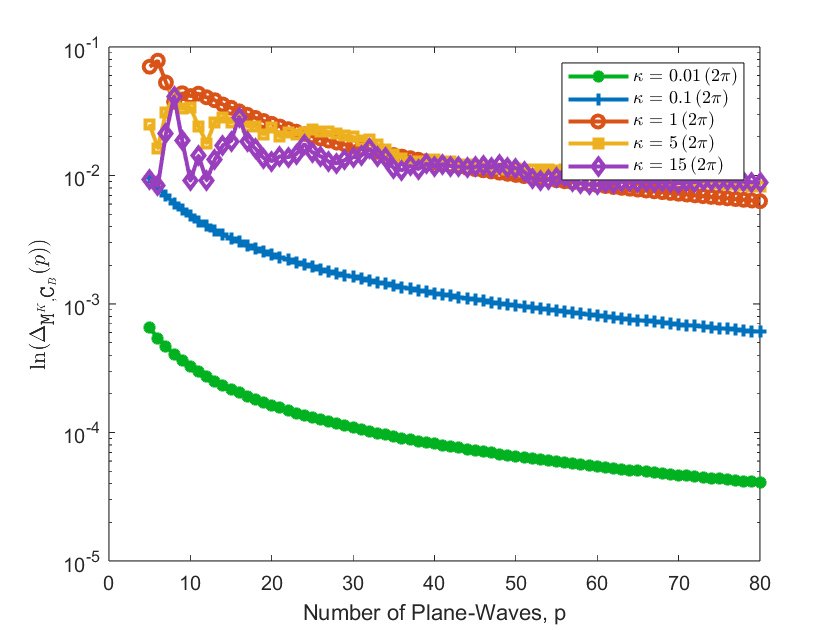} &
			\includegraphics[width=0.35\linewidth]{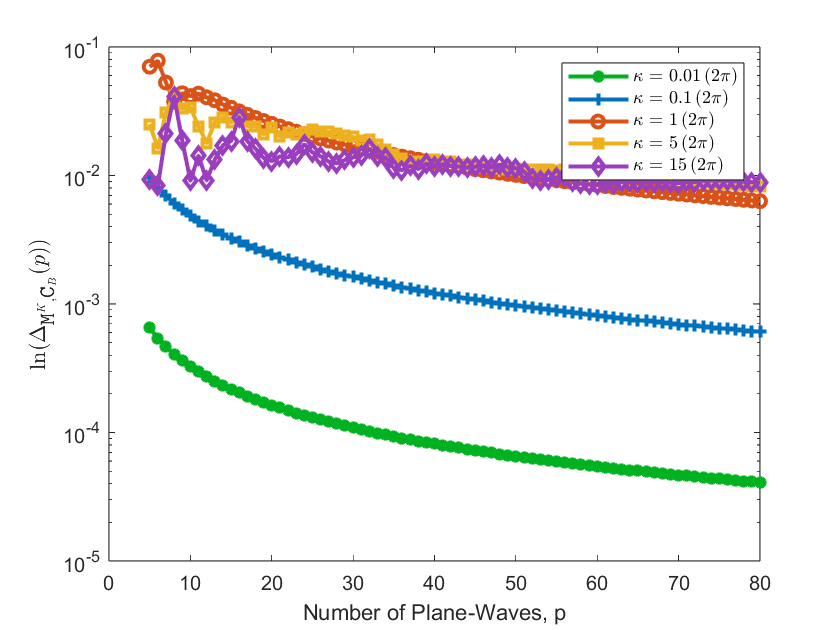} \nonumber \\
			(g) & (h) & (i) \nonumber
		\end{array}$\label{Fig:departurefromCirculant}
		\caption{The mass matrices $M^{K}$ on polygonal elements $K$ related to $\C^{K}_R$ and $\C^{K}_B.$  The three polygons we consider are regular $\left \{\theta = m\frac{\pi}{2}, m = 0,1,2,3; \, h = 1 \right \} $ in (a), cyclic $\left (\theta = 0,\frac{3 \pi}{4},\frac{9 \pi}{8},\frac{7 \pi}{4}; h = 1 \right )$ in (b), and the general polygon $\left \{ \mbox{vertices at } (0.3,0), (2,0.5), (1.5,1.3), \mbox{ and } (0,1.2) \right \}$ in (c). The plots of   $\ln \left ( \Delta_{\M^{K}, \C^{K}_R}(p) \right )$ and $\ln \left ( \Delta_{\M^{K}, \C^{K}_B}(p) \right )$as a functions of the number of plane-waves, $p,$ for $\kappa = 0.01 (2\pi), 0.1 (2\pi), 1 (2\pi), 5 (2 \pi),$ and $15 (2\pi)$ below each associated shape in (d), (e), (f), and (g),(h),(i),  respectively.   }
	\end{figure}

	\subsection{A closer look at $\C_B^{cylic}$}
	The behaviour of the best circulant matrix $\C_B^K$ is easier to analyze in the case $K=$ cyclic polygon (in this scase denoted $\C_B^{cyclic}$).
	The diagonal entries of this matrix are given by \eqref{eq:bcirc}:
	\begin{equation}
		[\C^{cyclic}_B]_{i, \,i}= \frac{1}{p} \sum_{s = 1}^p M_{s, \, s} \mbox{ for } i = 1,2...p-1.
	\end{equation} The off-diagonal terms can be written as 
	\begin{equation}
		[\C^{cyclic}_B]_{i, \,j} = \frac{1}{p} \left ( \sum_{s = 1}^{p-(i-j)} M_{s+(i-j), \, s} +  \sum_{s =p -(i-j) + 1}^{p} M_{s-p+(i-j), \, s} \right ) \mbox{ for } i > j, \label{eqc1}
	\end{equation}
	and
	\begin{equation}
		[\C^{cyclic}_B]_{i, \,j} = \frac{1}{p} \left ( \sum_{s = 1}^{P+(i-j)} M_{s, \, s-(i-j)} +  \sum_{s = P +(i-j) + 1}^{p} M_{s, \, s-P-(i-j)} \right ) \mbox{ for } i < j, \label{eqc2}
	\end{equation}
	When $i > j,$ in (\ref{eqc1})
	\begin{eqnarray}
		M_{s+(i-j), \, s} &=&  \sum_{\ell=0}^{L-1}\int_{\theta_\ell}^{\theta_{\ell+1}} e^{i \kappa |r(t)| a(i-j )b(2s + (i-j),t) } g_\ell(t)dt \nonumber
	\end{eqnarray}
	and
	\begin{eqnarray}
		M_{s-P+(i-j), \, s} &=&  \sum_{\ell=0}^{L-1}\int_{\theta_\ell}^{\theta_{\ell+1}} e^{i \kappa |r(t)| a(-P+(i-j) )b(2s - P + (i-j),t) } g_\ell(t)dt \nonumber \\
		&=&  \sum_{\ell=0}^{L-1}\int_{\theta_\ell}^{\theta_{\ell+1}} e^{i \kappa |r(t)| a((i-j) )b(2s + (i-j),t) } g_\ell(t)dt \nonumber 
	\end{eqnarray}
	
	Consequently, when $i > j,$
	\begin{equation}
		[\C^{cyclic}_B]_{i, \,j} = \frac{1}{p} \left ( \sum_{s = 1}^{p} \sum_{\ell=0}^{L-1}\int_{\theta_\ell}^{\theta_{\ell+1}} e^{i \kappa |r(t)| a((i-j) )b(2s  + (i-j),t) } g_\ell(t)dt \right ). \label{eqc3}
	\end{equation}
	Similarly, when $i < j,$ a similar calculation leads to 
	
	\begin{equation}
		[\C^{cyclic}_B]_{i, \,j} = \frac{1}{p} \left ( \sum_{s = 1}^{p} \sum_{\ell=0}^{L-1}\int_{\theta_\ell}^{\theta_{\ell+1}} e^{i \kappa |r(t)| a(i-j )b(2s - (i-j),t) } g_\ell(t)dt \right ).\label{eqc4}
	\end{equation}	
	We now observe
	\begin{equation}
		b(2s + (i-j),t) = \sin \left (t -|i-j|\frac{\pi}{p} - \frac{2s \pi}{p} \right ). \label{eqsum1}
	\end{equation}
	We can then write (\ref{eqc3}) and (\ref{eqc4}) as
	\begin{equation}
		[\C^{cyclic}_B]_{i, \,j} =   \sum_{\ell=0}^{L-1}\int_{\theta_\ell}^{\theta_{\ell+1}} \left (\frac{1}{p}\sum_{s = 1}^{p} e^{i \kappa |r(t)| a((i-j) )b(2s  + |i-j|,t) } \right ) g_\ell(t)dt. \label{eqc5}
	\end{equation}
Recalling that $a(\ell)$ is an odd function, it is obvious that $\C^{cyclic}_B$ is Hermitian. Moreover in light of (\ref{eqsum1}), 
	\begin{eqnarray}
		\frac{1}{p}\sum_{s = 1}^{p} e^{i \kappa |r(t)| a((i-j) )b(2s  + |i-j|,t) }
		&=& \frac{1}{p}\sum_{s = 1}^{p} e^{i \kappa |r(t)| a(i-j )\sin \left (t - (i-j)\frac{\pi}{p} - \frac{2s\pi}{p} \right ) } \nonumber \\
		& \approx & \frac{1}{2\pi} \int_0^{2 \pi} e^{i \kappa |r(t)| a(i-j )\sin \left (t - |i-j|)\frac{\pi}{p} - \phi \right ) } d \phi\nonumber \\
		& = & \frac{1}{2\pi} \int_{t - |i-j|\frac{\pi}{p} }^{t - |i-j|\frac{\pi}{p} +2 \pi} e^{i \kappa |r(t)| a(i-j )\sin \left (\phi \right ) } d \phi \nonumber \\
		&=& J_0\left ( \kappa |r(t)| a(i-j)\right ).
	\end{eqnarray}
	Finally,
	\begin{equation}
		[\C^{cyclic}_B]_{i, \,j} \approx   \sum_{\ell=0}^{L-1}\int_{\theta_\ell}^{\theta_{\ell+1
		}}  (J_0\left (  \kappa |r(t)| a(i-j)\right ) g_\ell(t)dt \label{eqc6}
	\end{equation}
	as the number of PWB $p$ increases. It is also clear that as the number of sides $L$ increases, $[\C^{cyclic}_B]_{i, \,j} \rightarrow \M_{i,\,j}^{disk}$.

		We compare the spectra of $\C_R^{reg}, \C^{reg}_B$ and $\M^{reg}$ on a regular triangle, a hexagon and a 20-gon in Figure \ref{fig:regular-20}.\begin{figure}
		\centering
		$\begin{array}{ccc}
		\includegraphics[width=0.3\linewidth]{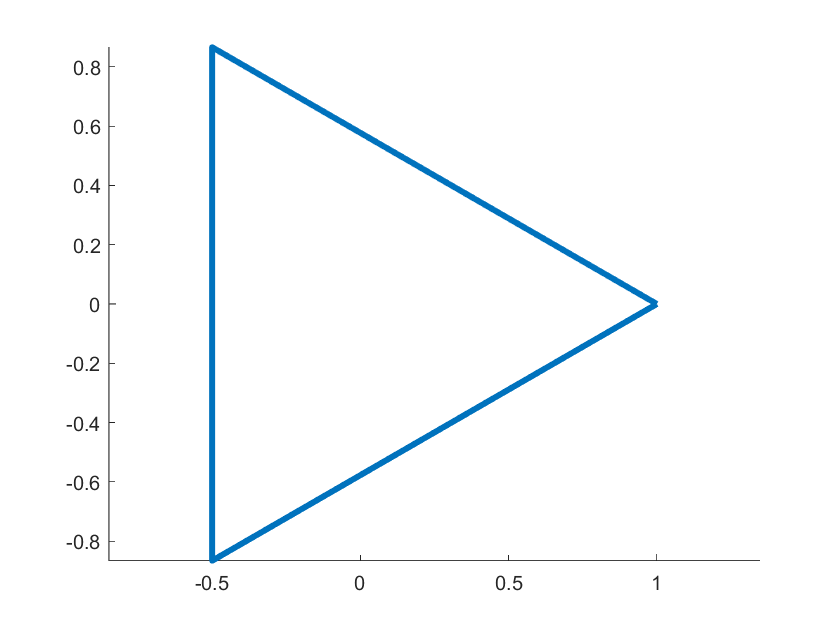}&
		\includegraphics[width=0.3\linewidth]{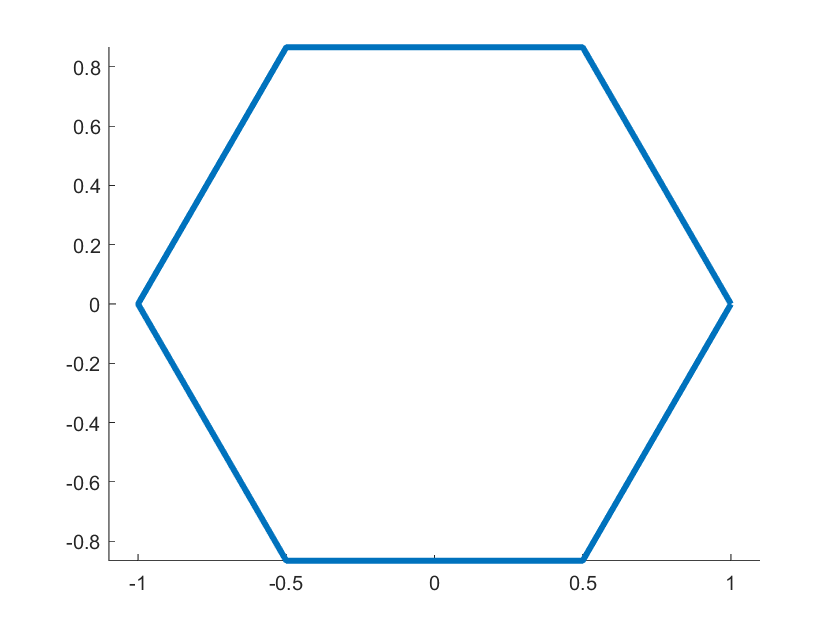}&	
		\includegraphics[width=0.3\linewidth]{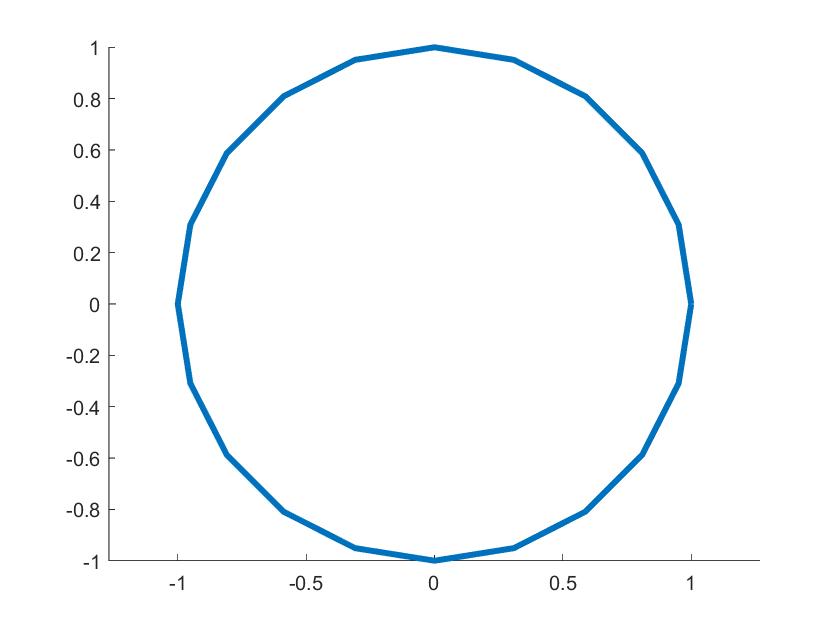} \nonumber \\	
			(a) \, \, \,  L = 3 &  (b) \, \, \, L = 6 & \, \, \, (c) \, \, \, L = 20\nonumber \\
		\includegraphics[width=0.3\linewidth]{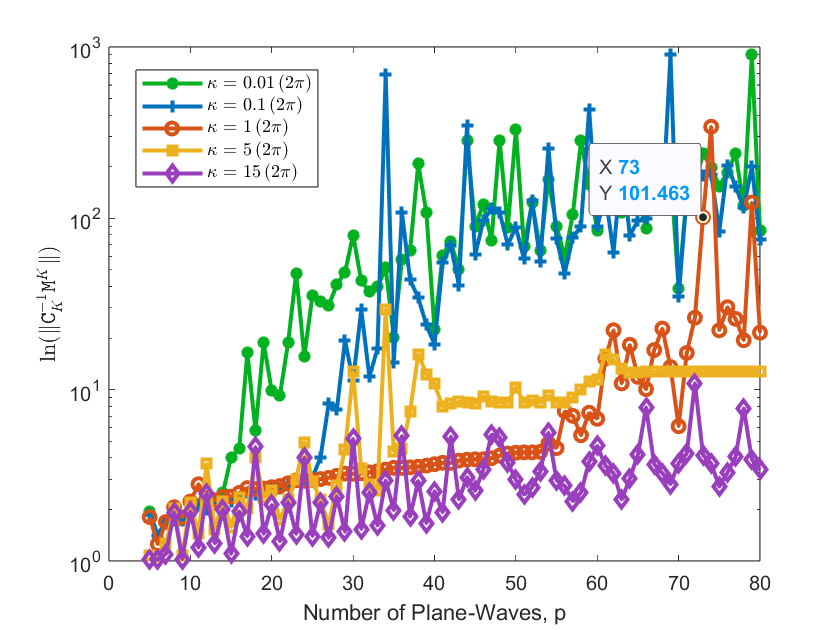}&
		\includegraphics[width=0.3\linewidth]{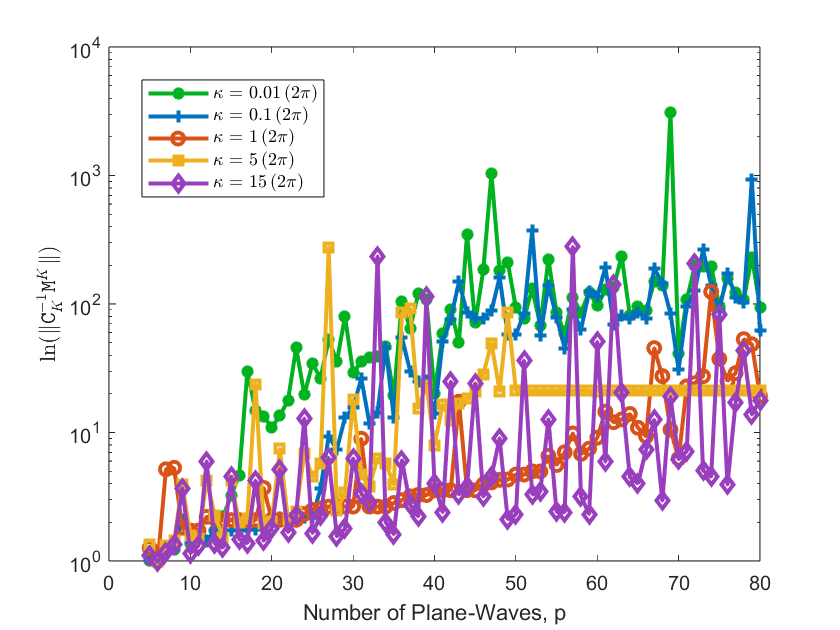} &
		\includegraphics[width=0.3\linewidth]{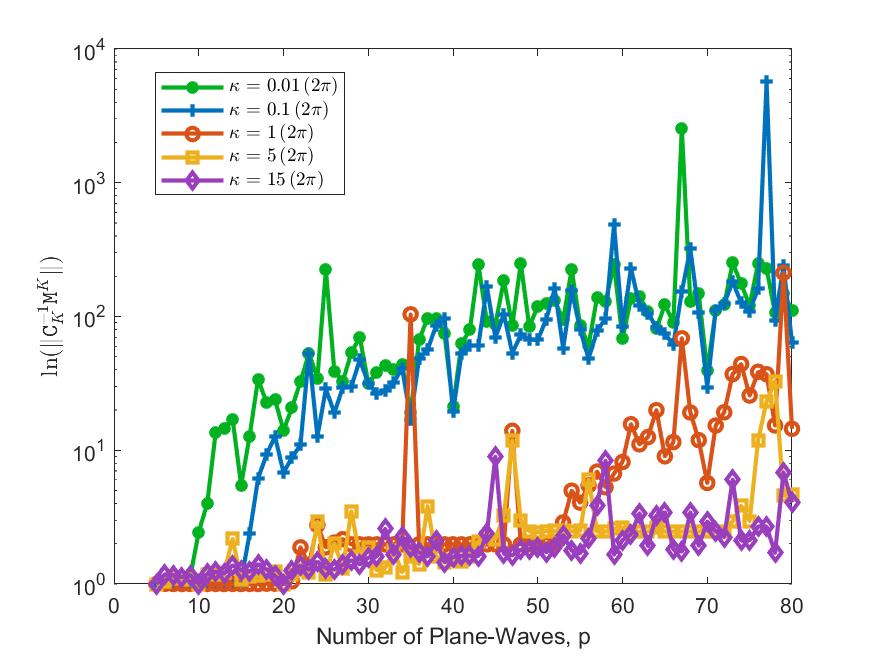} \nonumber \\
			(d)  &  (e)  &  (f) \nonumber \\
		\includegraphics[width=0.3\linewidth]{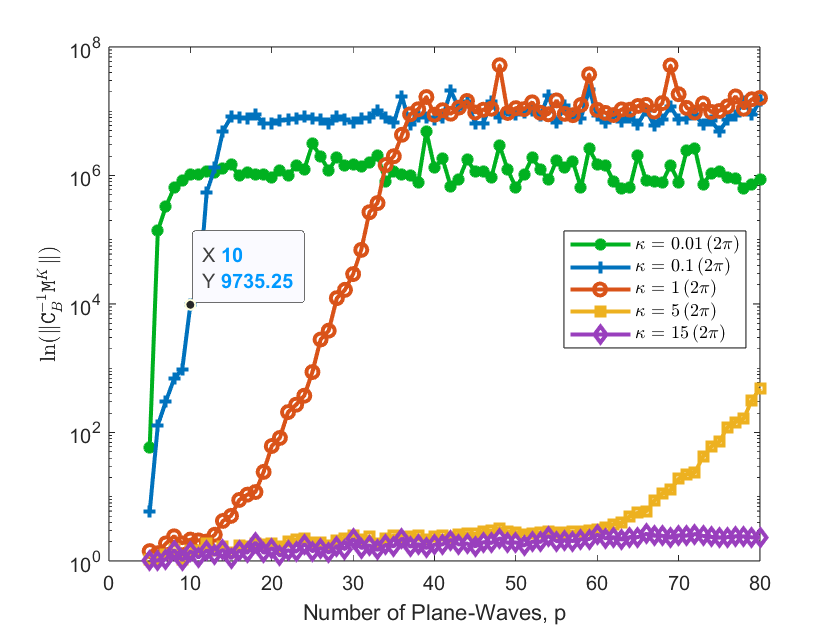}&
		\includegraphics[width=0.3\linewidth]{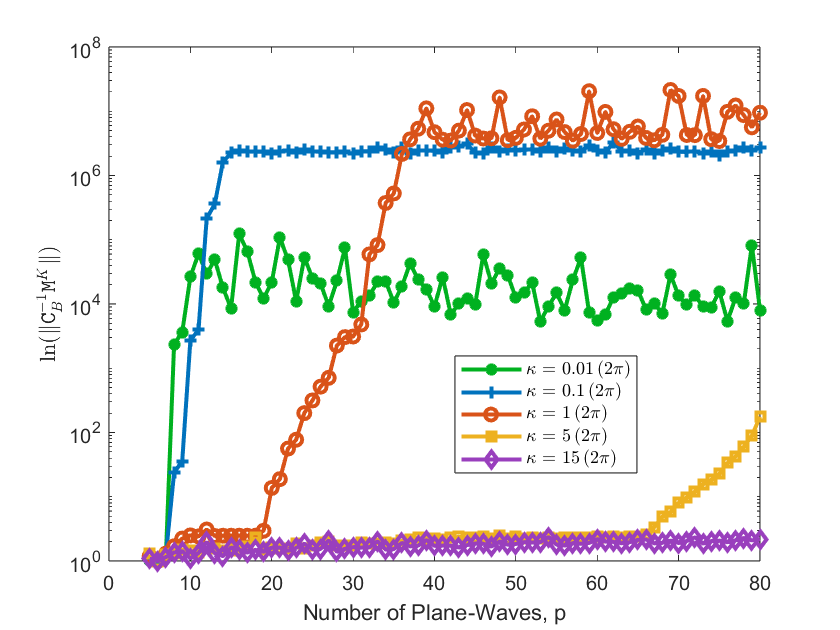} &
		\includegraphics[width=0.3\linewidth]{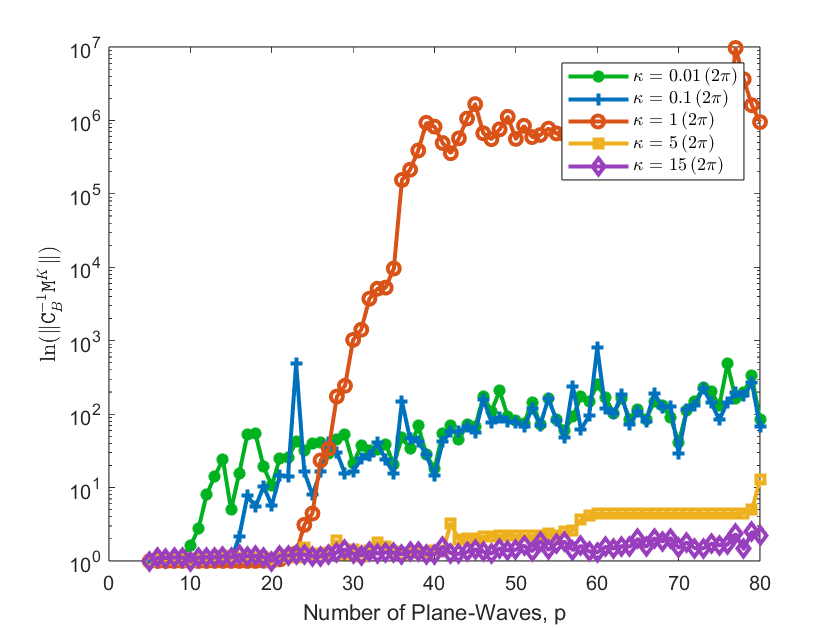} \nonumber \\
			(g)  &  (h) & (i) \nonumber
		\end{array}$
		\caption{Comparison of $\M^K$ with both ${\tt  C }^{reg}_R$ and ${\tt C}^{reg}_B$ on a regular polygons with sides $L = 3$ (a), $L = 6$ (b), and $L = 20$ (c). Results are shown for $\kappa$ values of $0.01 \pi, 0.1\pi,  \pi, 5 \pi,$ and $15 \pi$ for $\ln\left ( \left({\tt C}_R^{reg}\right)^{-1}\M^K\right )$ in the second row, (d) - (f), and for  $\ln\left ( \left ({\tt C}_B^{reg}\right )^{-1}\M^K \right)$ in the second row, (g) - (i) for plane-waves $p = 5,...,80.$ The results are grouped by column for each shape.}	\label{fig:regular-20}
	\end{figure}


\section{Preconditioners and condition numbers}

In  previous sections, we established that the mass, stiffness and cross matrices $\M^K,\S^K,\D^K$  for the plane-wave basis are {\it necessarily} ill-conditioned as the size of the system increases. To address this, one typically constructs a preconditioner for the system matrix $\SyS^K$. The {\it choice} of preconditioner is dictated by the source of ill-conditioning, as well as the iterative method one will use to solve the linear system. A good preconditioner is one which is efficient to implement and improves accuracy.

We document in Figure \ref{fig:allhk} the dependance of the condition number of {\tt SyS} on both element size $h$ and wavenumber $\kappa$. This is consistent with the behaviour of the condition number of the system matrix on the entire mesh in \autoref{fig1}(c), where the conditioning rapidly deteriorates with the number of plane-waves.
\begin{figure}
	\centering
	\includegraphics[width=0.4\linewidth]{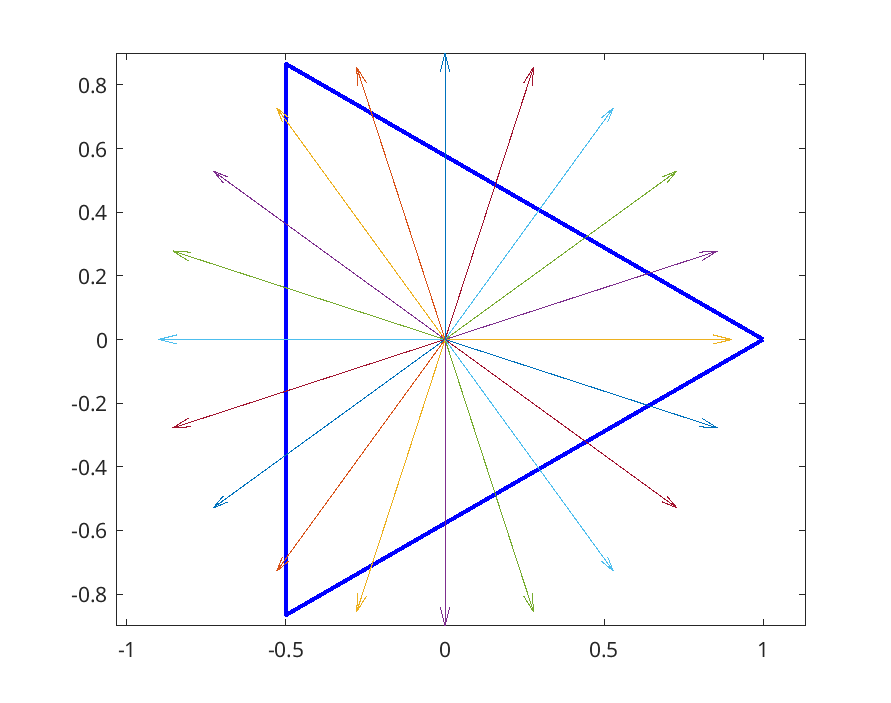}
	\includegraphics[width=0.4\linewidth]{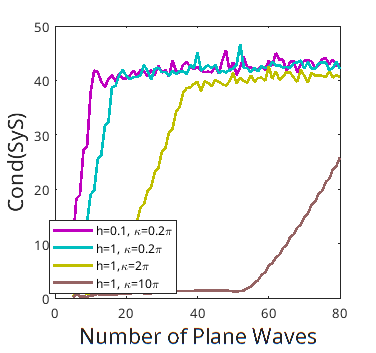}
	\caption{Dependance of condition numbers of the full system matrix, on element size h and frequency $\kappa$. As the number of plane waves increases, the conditioning deteriorates. Computations on an equilateral triangle with vertices on disk of radius $h$, equispaced PWB (left). }
	\label{fig:allhk}
\end{figure}

In the concrete case of the PWB, we present several choices of  preconditioners. Each of these is based on the properties of the PWB, either on the polygonal element $K,$ or a disk of similar diameter. Consequently, each of these preconditioners (apart from the regularized ones) will {\it also} be poorly conditioned. Nonetheless, as we shall see, some dramatically improve the conditioning of the system matrix, while others have good properties when used as part of a direct solve or  iterative strategy such as GMRES.

In previous work on a mesh (instead of a single element) \cite{CongreveMGS}, a modified Gram Schmidt method is used to construct an orthogonal change of basis, leading to a better conditioned system matrix. Concretely, if $\M$ denotes the mass matrix on the whole mesh, then
$ \M = {\tt Q} \tilde{\M} {\tt Q}^*,$ where ${\tt Q}$ is obtained from a QR decomposition of $(\M+\M^*)$. The matrix $\tilde{\M}$ is then somewhat  better conditioned, and to solve $ {\tt Mx=f}$ we instead solve $${\tt \tilde{M} y} = {\tt Q}^* {\tt f}, {\tt x = Qy}.$$ 

Another related approach would be to start with the spectral decomposition of the mass matrix on a single element: 
$$ {\tt \M^K = X \Lambda({\M^K}) X^*},$$ and to use $\Lambda({\M^K})^{-1}$ as a preconditioner to solve 
$ \Lambda({\M^K})^{-1}\SyS^{K} {\tt x} = \Lambda(\M^{K})^{-1} {\tt f}.$
While the columns of ${\tt X}$ are orthogonal (eigenvectors), this approach has two problems. First, we would need to compute the eigendecomposition of the matrix $\M^K$ - this preconditioner would not be efficient in practice. Second, the $\Lambda(\M^K)$ is severely ill-conditioned as $p$ increases.

A compromise based on the observations of the previous sections, is to exploit the `nearly Toeplitz' nature of the mass matrix as the number of plane-waves $p$ increases. In the previous section, we noted that the mass, cross, and stiffness matrices associated with polygonal elements were neither Toeplitz nor circulant. Nonetheless, as the number of sides increases, these matrices tended towards a Toeplitz behaviour. It has been well-documented \cite{chan,circulantreal,strang,Gray} that circulant preconditioners are very effective for Toeplitz matrices. An important criterion for the selection of a preconditioner is its ease of application. To this end, we focus on circulant preconditioners in three categories:
\begin{enumerate}
	\item {\it Disk-based.} These strategies are based on the observation that on the disk of radius $h$, the same as the radius of the circumcircle of polygonal element $K$, matrices $\M^{disk},\S^{disk},$ and $\D^{disk}$ are circulant, Theorem \ref{thm2}. On a polygon, the matrices are {\it close} to circulant, though note the eigenvectors of $\mathtt{SyS}$ will not be the same as those on the disk. 
	For this class of preconditioners, we can use the exact expressions for the mass matrix $\M^{disk}$ or the system matrix $\SyS^{disk}$ on a disk of radius $h$, derived in Section 3; their eigenvalues are efficiently computed using a DFT of their first row. In other words, the eigenvalue decompositions
	$$ \M^{disk} = \U \Lambda({\M^{disk}})\U^*, \qquad \SyS^{disk} = \U \Lambda({\SyS^{disk}}) \U^*$$ can be computed very efficiently, and preconditioners based on these can be applied fast.
	\item {\it Polygon-based.} In this approach, we directly work with the mass and system matrices $\M^K, \SyS^K$ on the physical element $K$ itself. We know (from Section 4) that these matrices are neither Toeplitz nor circulant. Nonetheless, we can {\it build} circulant matrices from these matrices from their first rows, as in Section 4. Once again, the application of such preconditioners will be fast.
	
	We denote by $\C^{K}_{R}$ the circulant matrix constructed by using the first row of $\M^K$  respectively. Recall also that $\C^K_{best}$ is a circulant matrix generated from the average value of the  diagonals of the mass matrix $\M^{K}$, (see \eqref{eq:bcirc})
	\begin{equation}
	\C_{best}^K:=\text{circ}[c_0,c_1,...c_{p-1}],\quad 	c_{i-j} = \frac{1}{n} \sum_{p-q = i - j (mod n) } [\M^{K}]_{pq}.
	\end{equation} 
	
	\item {\it Regularization-based.}
	As we have seen previously, the system matrix $\SyS$ becomes severely ill-conditioned with many eigenvalues of extremely small magnitude. A regularization - explicitly setting eigenvalues smaller than a threshold to 0 - could be a viable preconditioning strategy for such matrices. Such ideas have been fruitfully applied to the solution of ill-posed Toeplitz problems, \cite{chan, elden}.
	However, to avoid an eigendecomposition of $\M^K$, we use instead the (easily computed) eigenvalues of $\M^{disk}$, corresponding to an element with similar size as $K$. 
	
	In order to solve ${\tt A x} = {\tt f}$, suppose ${\tt A}$ is close to a circulant matrix ${\tt Q} = \U \begin{bmatrix} \Lambda_1 & 0\\
		0 &\Lambda_{\delta}\end{bmatrix} \U^* $ where $\Lambda_1$ is a diagonal matrix with entries of magnitude  $\geq \delta > 0$ and $\Lambda_{\delta}$ is diagonal with entries smaller than this regularization parameter $\delta$. In this case, one can define either 
	$ {\tt Q}^{regularized} :=  \U \begin{bmatrix} \Lambda_1 & 0\\
		0 & I\end{bmatrix} \U^* $ and use its inverse as a preconditioner, or one can directly use the singular matrix ${\tt Q}^{\dagger}:=\U \begin{bmatrix} \Lambda_1^{-1} & 0\\
		0 & {\tt Z}\end{bmatrix} \U^* $ where ${\tt Z}$ is a zero matrix. 
	
	In the concrete case of the Trefftz methods, we choose ${\tt Q}:= (\kappa^2\M^{disk})^{1/2}$, whose eigenvalues are readily computed. 	
\end{enumerate}

To compare the performance of these preconditioners, we shall fix the element shape $K=$ an equilateral triangle, $h=0.1,$ the radius of its circumcircle, and  $\kappa=0.2\pi$, unless otherwise specified. The condition numbers of the preconditioned system will be compared. 
The idea we intend to exploit is that as the number of plane-waves becomes large, the deviation of the mass matrix $\M^{reg}$ from the circulant matrix $\M^{disk}$ becomes small, as in Figures \ref{Fig:departurefromToeplitz} and \ref{Fig:departurefromCirculant}. We investigate the behaviour of several preconditioners for the matrix ${\tt SyS}$ defined in \eqref{eq:systemlinear}, with the eventual goal of solving ${\tt SyS x}={\tt f}$.

We anticipate square-root type preconditioners $\P$ which approximate $\SyS^{-1/2}$ will outperform preconditioners which approximate $\SyS^{-1}$ while solving linear systems, since they will be better conditioned; the singular values of $\SyS$ include both very small and very large values as the number of plane waves increases.

To test this, we solve $\SyS {\tt x = f}$ using the polygon-based circulant matrix $\kappa^2 \C^K_{{\tt R},\M}$ as a  preconditioner. That is, to solve ${\tt SyS}^K {\tt x} = {\tt  f}$, we solve instead
$(\kappa^2\M^{disk})^{-1}{\tt SyS^K {x}} = (\kappa^2\M^{disk})^{-1}{\tt  {f}}$ (left preconditioning). We compare the results with using the square root of this matrix,  $(\kappa^2 \M^{disk})^{1/2}$, instead.  From Figure \ref{fig:compareconditioning} we clearly see that this preconditioner leads to well-conditioned system matrices in comparison with the `square-root' preconditioners $\P_1,\P_3,\P_5$ which approximate $({\M^K})^{-1/2}$. However, the $L^2$-errors in solving a point-source scattering PDE are substantially worse using this preconditioner. 

\begin{figure}
	\centering
	\includegraphics[width=0.3\linewidth]{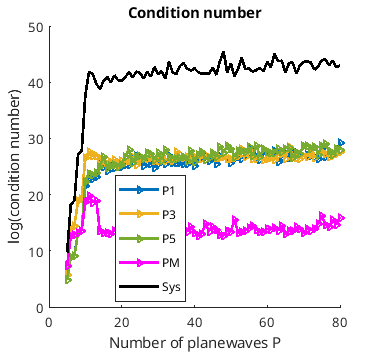}
		\includegraphics[width=0.3\linewidth]{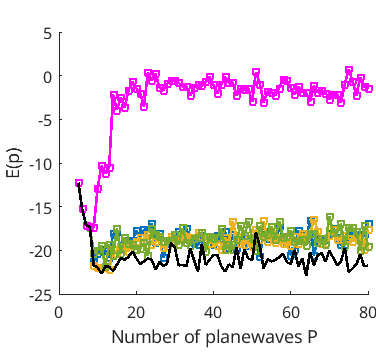}
			\includegraphics[width=0.3\linewidth]{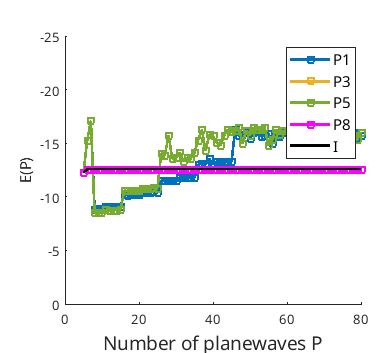}
	\caption{Impact of preconditioning with ${\tt P} \approx \M^{-1}$ versus  ${\tt P} \approx \M^{-1/2}$. Left: domain $K$ and plane wave directions. Middle: condition number of $\P_i\SyS$ for $\P_1,\P_3, \P_5$ and $PM:=(\kappa^2\M^{disk})^{-1}$. Middle: (direct),  Right(GMRES): Relative errors $E(p)$ for preconditioned systems. Black curves: unpreconditioned system. The figure legend is the same for all 3 subplots. }
	\label{fig:compareconditioning}
\end{figure}


Based on these results, in the remainder we focus our attention on 7 preconditioning matrices $\P_1-\P_7$ of square-root type, and their performance as left, two-sided, or right preconditioners. 
We group these into three broad categories as before.
\begin{itemize}
	\item{\it Disk-based preconditioners.}
Consider a disk with the same center and radius as the circumcircle of the element $K$. We use the (square root of) $\M^{disk}$ or $\SyS^{disk}$ as preconditioners. Concretely, define   
	\begin{equation}\label{eq:P1P2}
		\P_1:=(\kappa^2\M^{disk})^{-1/2} = \kappa \U \Lambda({\M^{disk}})^{-1/2} \U^*,\qquad \P_{2}:=\U (\Lambda({\mathtt{SyS}^{disk}}))^{-1/2}\U^* 
	\end{equation}
	\item {\it Polygon-based preconditioners.}
	We can generate circulant matrices from the first row of either $\kappa^2\M^K$ or $\SyS^K$, and use these as preconditioners to define 
	\begin{equation}\label{eq:P3P4}\P_3:= (\kappa^2 \C^{K}_{{\tt R},\M})^{-1/2} = \U (\kappa^2\Lambda({\C^{K}_{{\tt R},\M}}))^{-1/2} \U^*, \qquad  \P_4:= (\C^K_{{\tt R},\SyS})^{-1/2}.\end{equation}	
A Hermitian circulant matrix can be generated by an averaging process from $\M^{K}$ in  \eqref{eq:bcirc}, \begin{equation}\label{eq:P5}
		\P_5:=(\kappa^2\C^K_{best})^{-1/2} =  \U (\kappa^2\Lambda \left ( \C^K_{best} \right )^{-1/2}) \U^*.
	\end{equation}	
	\item {\it Regularization-based preconditioners.}
	A simple regularized preconditioner can be constructed as 
	$$ \P_6:=\kappa \U  \, {\tt diag}(\mu_1,...,\mu_{\delta},1,1,...,1)^{-1/2} \U^{*}, \qquad \mu_i :=\lambda_i(\M^{disk})$$
	The eigenvalues of $\M^{disk}$ are easy to compute. In $\P_6$ we retain the $\mu_i$ such that ${\mu_i}>\delta$ in magnitude, and set the others to 1. The matrix $\P_6$ has smaller norm than that of $\P_1$. The tolerance $\delta>0$ is defined by the user.  This idea is motivated by \cite{channg} and \cite{HANKE1998137} on preconditioning Toeplitz matrices, who recommend the use of (right) regularization-based preconditioners.  

Another form of regularization is the use of a singular preconditioner, motivated by the work of \cite{elden} who studied the behaviour of such preconditioners with GMRES while solving non-self-adjoint ill-posed problems. 
	We define
	\begin{equation} \label{eq:P7}\P_7:=\frac{1}{\kappa}\U \, {\tt diag}(1/\mu_1,...,1/\mu_{tol},0,0,...0)^{1/2}\U^{*}.\end{equation}
Here, again $\mu_i$ are the eigenvalues of the mass matrix on the disk $\M^{disk}$ so that  ${\mu_i}\geq \delta$ in magnitude.
\end{itemize}

In Figure \ref{fig:PreconditionerComparison}, we present results on the system condition numbers based on these choices, for $\kappa h = 0.01\pi.$ As can be seen, the best reduction on condition number is provided by the use of the left-right preconditioners. The preconditioners $\P_i$ we have chosen are of square-root type, and therefore $\P_i \SyS\P_i \approx {\tt I}$.

The regularized preconditioner $\P_{6}, \P_{7}$ offer a reduction in condition number which depends on the tolerance $\delta$ picked. These preconditioners should be used with care. 

\begin{figure}
	\centering
	\includegraphics[width=0.3\linewidth]{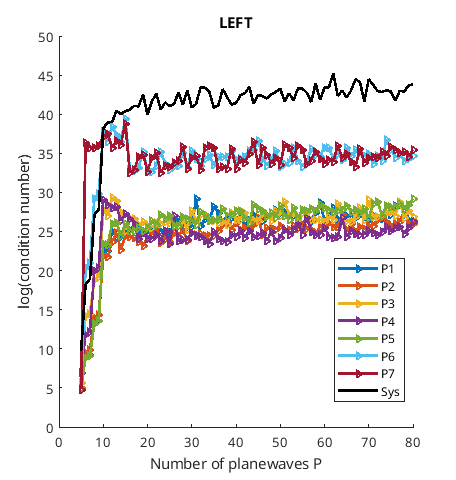}
	\includegraphics[width=0.3\linewidth]{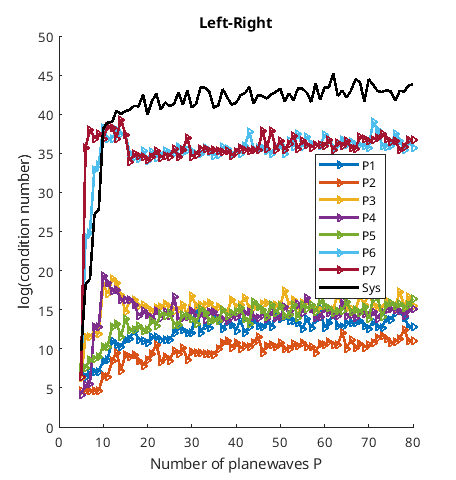}
	\includegraphics[width=0.3\linewidth]{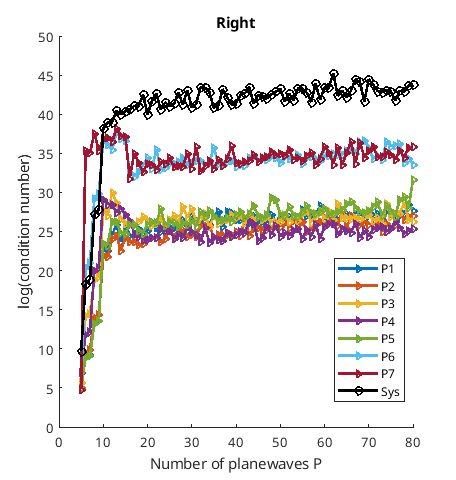}	
	\caption{A comparison of the condition number of the (preconditioned) system matrix on an equilateral with vertices on a disk of radius $h=0.1, \kappa =0.2\pi.$ Left panel: $cond(\P_i\SyS)$. Middle: $cond(\P_i\SyS\P_i)$. Right: $cond(\SyS\P_i)$.Parameters used for the regularization-based preconditioners are $\delta =1e-10$. Condition number of $\SyS^K$ shown in black. }
	\label{fig:PreconditionerComparison}
\end{figure} 
\section{Preconditioners and $L^2$ errors}

One deploys  preconditioners $\P$ while using an iterative solver such as GMRES to solve the problem $\SyS^{K} {\tt x} = {\tt f}$. One could also use a direct approach (such as \MATLAB's backslash ) on the preconditioned system. We examine the action of the matrices $\mathtt{P}_i$ as {\it left preconditioners}, {\it two-sided preconditioner} or {\it right preconditioners} while solving the system $\SyS^K {\tt x}  = {\tt f}$. That is, we solve for ${\tt x}$ as
$${\mathtt P}\SyS^K {\tt x} = {\tt P}{\tt f},\qquad \P{\tt SyS^K }\P {\tt y} = \P{\tt  f}, \,\, {\tt x} = \P{\tt y}, \, \, \text{or} \, \,\qquad {\tt SyS^K }\P {\tt y} = {\tt f}, \,\, {\tt x} = \P{\tt y}.$$ 

For convenience we remind the reader of the PDE of interest: find $u\in H^1(K)$ such that 
	\begin{eqnarray} 
		\Delta u + \kappa^2 u = 0 \, \mbox{ in } K, \label{eq:helmeqt}\qquad 
		\frac{\partial u }{\partial \nbm } + i \kappa u = g_K\,\, \mbox{ on } \partial K.
	\end{eqnarray}
The associated system matrix on a single element, $\Omega = K$ is $\SyS^K = (k^2  \M^K +  ik (\S^K-\bar{\S}^K) + \D^K)$.
In order to test ideas, we use $g_K$ for which the solution $u$ is known. We
generate the related ${\tt {f}}\in \mathbb{C}^P$ and solve
\begin{equation}\label{eq:systemlineart} 
	{\tt \SyS^K \, {x}=  {f}, \quad x}=[x_1,x_2,...,x_p]
\end{equation}	
We can then compare the computed pde solutions $u_p:=\sum_{m=1}^p x_m \phi_m$ of \eqref{eq:helmeqt} with the exact solution. The measure we track is the relative $L^2$-error of the computed solution $u_p$ as a function of the number of PWB :
{\begin{equation} \label{eq:measure}
		E(p):=\left (\int_K |u-u_p|^2 \right)^{1/2}/\left(\int_K |u|^2 \right)^{1/2}
\end{equation} }
This allow us to directly study the impact of using the preconditioners $\P_1-\P_7$ with \MATLAB's GMRES preconditioner. We set $\M_i:=\P_i^{-1}$ for $i=1,...,6$ (these inverses are easily computed thanks to their structure), and the function call is then {\tt gmres($\SyS$, {f}, restart, tol, maxit, $\M_i$)} for left preconditioning. The preconditioner $\P_7$ is {\it not} invertible, and therefore the call to \MATLAB would be {\tt gmres($\P_7\SyS$, {f}, restart, tol, maxit)}

We address the following questions in this section.
\begin{enumerate}
	\item Which preconditioners $\P_1,...,\P_7$ work best with a direct solver such as \MATLAB's backslash?
	\item Which preconditioners work best with GMRES?	
	\item How sensitive is our preconditioning strategy to the size of the domain $K$ and the wavenumber $\kappa$?	
	\item Is there a universally-recommended preconditioning strategy? 
\end{enumerate}
\subsection{Plane wave scattering}

We begin by solving \eqref{eq:helmeqt} for the case of plane-wave scattering, with wavenumber $\kappa = 0.2\pi$, i.e., $g_K= {\rm e}^{\iota \kappa x}$ in \eqref{eq:helmeqt}. The domain $K$ is an equilateral triangle whose vertices are on a disk of radius $h=0.1$.
In Figure \ref{fig:l2errorspentagon1-5}, we report the relative $L^2$ errors using both direct solvers and GMRES, with the default tolerance $tol=[]$ and maximum number of iterations, with restart after 5 iterations. Concretely, for the left preconditioners $\P$, we report the $L^2$ error when solving
$ \P \SyS {\tt x}= \P {\tt f}$ directly, and by using \MATLAB's GMRES call ${\tt  x= gmres(SyS,f,5,[],[],P^{-1})}$. For the left-right preconditioners, we find the solution $x$ using $\P \SyS \P y= \P {\tt f}, {\tt x}=\P {\tt y}$ directly, or using the GMRES call ${\tt y= gmres(\SyS\P,R,restart,tol,maxit, \P^{-1});x=\P {\tt y}}$. The right preconditioners can be similarly studied. As discussed, the use of $\P_7$ as a two-sided or left preconditioner requires some care.

We observe in this experiment that the recommended choice of preconditioner clearly depends on whether we use a direct or indirect solver. At least for this experiment, using {\tt backslash} on the unpreconditioned system is optimal, followed by its use with left preconditioners. Not surprisingly, using the poorly-conditioned matrices $\P_i$ as right preconditioners with a direct solver leads to poor accuracy.
\begin{figure}[h]
	\centering
	\includegraphics[width=0.3\linewidth]{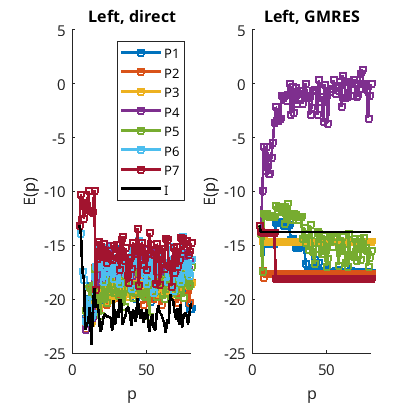}
	\includegraphics[width=0.3\linewidth]{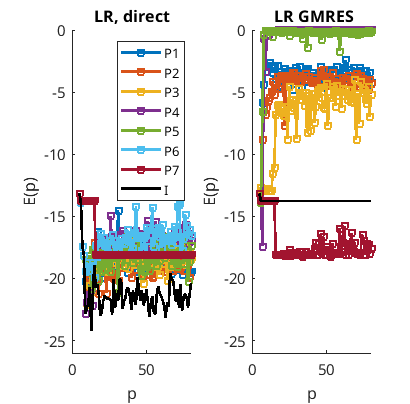}
	\includegraphics[width=0.3\linewidth]{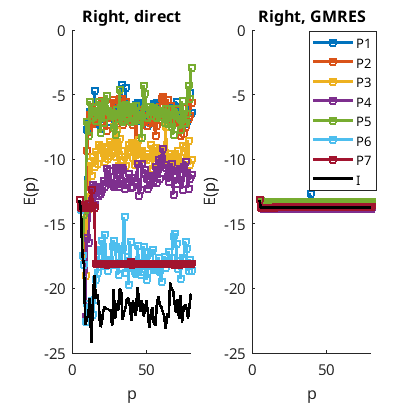}
	\caption{Plane-wave scattering. Effect of preconditioners on the solution of $\SyS x = f$. Left panel: Errors $E(P)$ (\eqref{eq:measure}) using direct or GMRES methods for left preconditioned systems. Middle: Left-right preconditioning. Right: Right preconditioning. Here, the domain $K$ is a regular triangle with vertices on a disk of radius 0.1, with wavenumber $\kappa = 0.2\pi$. Regularization parameter is $\delta=1e-10$, and GMRES tolerance is the \MATLAB default $tol=1e-6$. Also shown (black curves) are solves using the unpreconditioned system.}
	\label{fig:l2errorspentagon1-5}.
\end{figure}
From these experiments, it is easy to see some preconditioning strategies are terrible! We should {\it not} use preconditioner $\P_4$ \--- using $(\SyS^{K})^{1/2}$ \--- as a left preconditioner. Two-sided preconditioners performed poorly in comparison with left or right preconditioning. 

This highlights one of the important observations in this paper: improving the conditioning of the linear system does not directly improve the accuracy of the pde solution. GMRES, for instance, will minimize the residual of the {\it preconditioned system} and not the quantity $\|{\tt x_{true} - x_{computed}}\|_{L^{2}}.$

\subsection{Point source scattering}

Next we consider the situation of point-source scattering, with the source located outside the domain $K$.  Unlike the previous experiment, where it is possible for the incident plane-wave to be aligned very well with a basis function, for point source scattering this is not the case. Nonetheless, the behaviour of preconditioners $\P_i$ was very similar to the plane-wave scattering case. The results are summarized in Figure \ref{fig:fanpentagon}. Once again we see that the performance (in terms of error reduction) depends on whether one uses the preconditioner with  GMRES or a direct solver.

\begin{figure}
	\centering
	\includegraphics[width=0.3\linewidth]{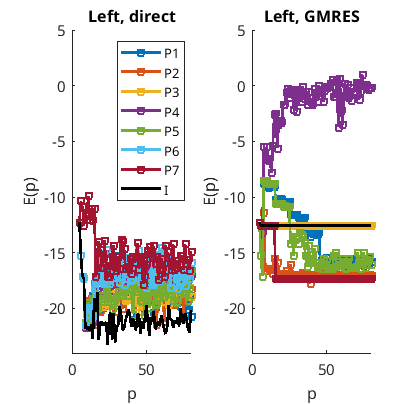}
	\includegraphics[width=0.3\linewidth]{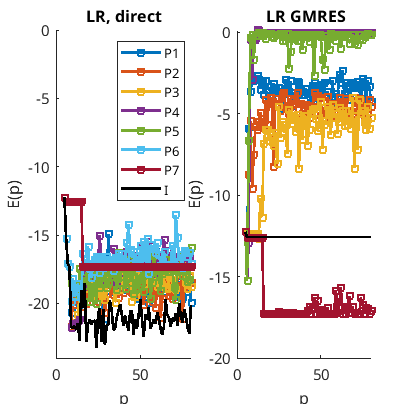}
	\includegraphics[width=0.3\linewidth]{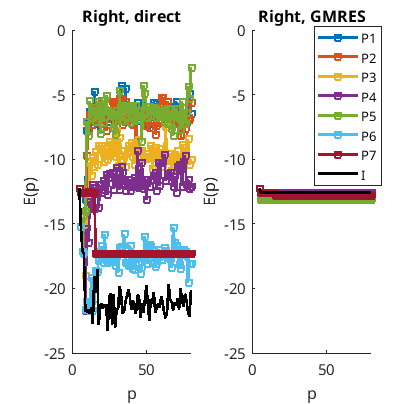}
	\caption{Point-source scattering from an equilateral triangle, $h=0.1, \kappa = 0.2\pi$. The point source is located at (2,-4). Left: Relative $L^2$ errors $E(p)$ v/s number of PWB $p$ using the selected left preconditioners as part of a direct or iterative solver. Middle: Left-right preconditioners. Right: right preconditioning. Regularization parameter is $\delta=1e-10$, and GMRES tolerance is $tol=1e-6$, with restart after 5 iterations. }
	\label{fig:fanpentagon}
\end{figure}

We see preconditioning does {\it not} improve the performance of the direct solver. Using  $\P_2$ or $\P_5$ as left preconditioners gives results which are comparable to the unpreconditioned system. On the other hand, left preconditioning of GMRES by the singular preconditioner $\P_7$, followed by circulant preconditioners $\P_2$ and $\P_5$ improves accuracy. The right preconditioner strategies all have similar levels of accuracy as the unpreconditioned GMRES.

Two-sided preconditioners perform worse than one-sided preconditioning, despite the superior conditioning of the system (apart from $\P_7$). This isn't surprising: multiplication of the intermediate vector ${\tt y}$ by $\P_i$ is not a well-conditioned operation.

Based on these observations, in the next subsections we omit $\P_4$ from consideration. We also focus on one-sided preconditioning strategies.

\subsection{Element size and wavenumber}

Both the size (measured by the radius of the circumcircle, $h$) and  the wavenumber $\kappa$ play a role in the conditioning of the matrices as well as preconditioners, as was seen in preceding sections. We study this effect in the concrete case of point-source scattering of wavelength $\kappa$ from a regular triangle $K$ whose size is $h$. The impact of size and wavenumber is clear both for the left preconditioning strategies (Figure \ref{fig:leftcondhsmall}). Moreover, it is again evident that the lowest reduction in condition number does not always translate into the best $L^2$-error reduction. Left preconditioning by $\P_2, \P_5,$ or $\P_7$ either improves accuracy in a GMRES solve, or gives comparable results.
\begin{figure}
	\centering
	\includegraphics[width=0.22\linewidth]{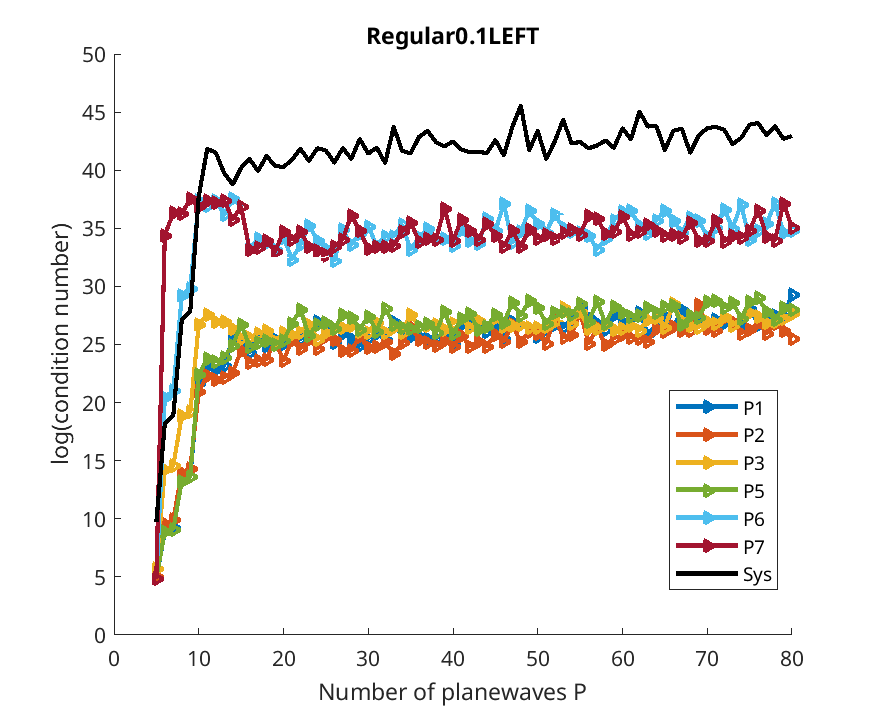}
	\includegraphics[width=0.22\linewidth]{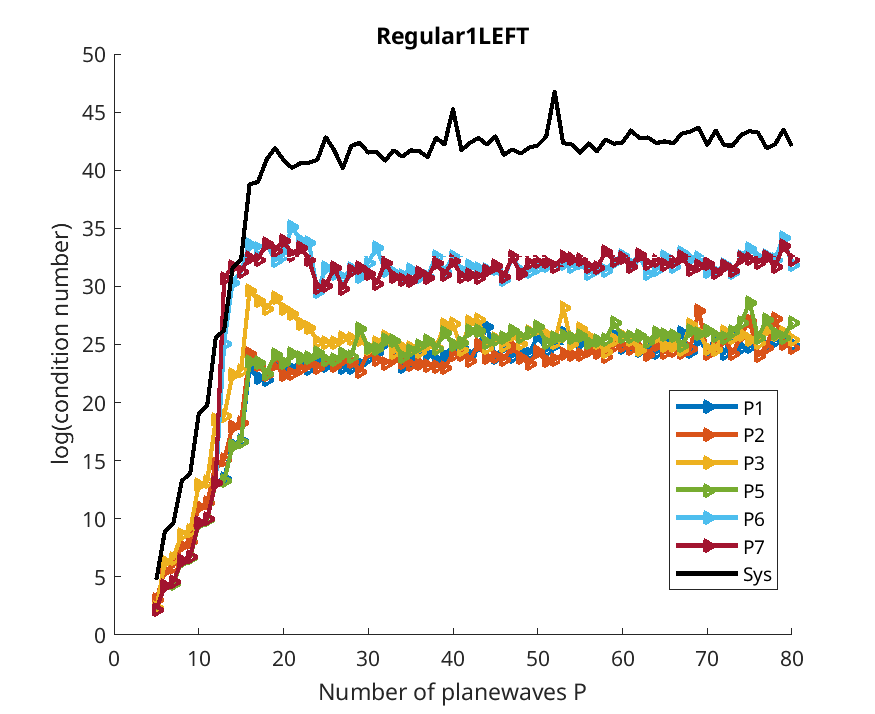}
	\includegraphics[width=0.22\linewidth]{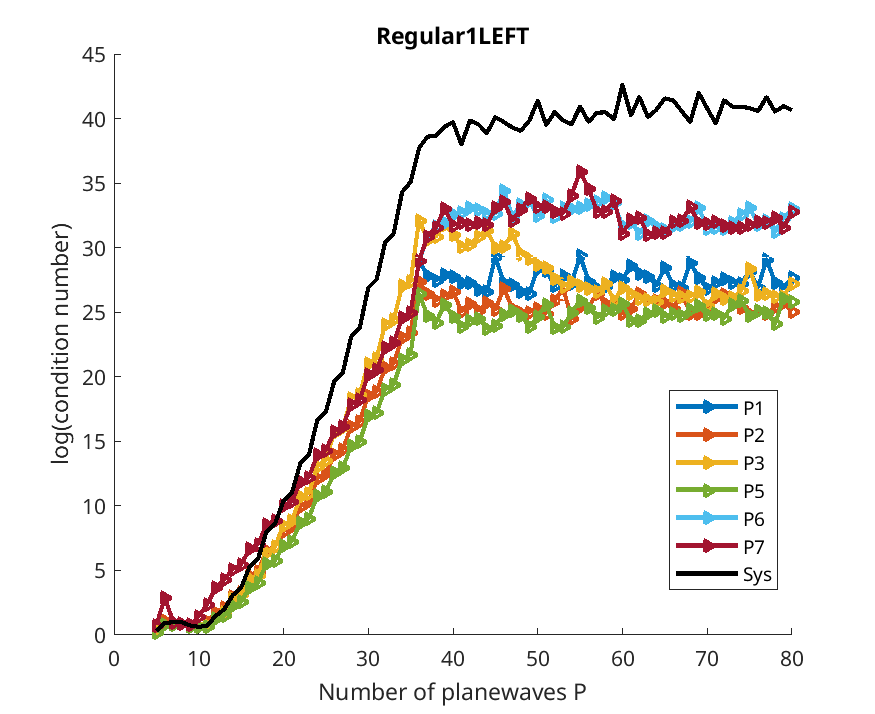}
	\includegraphics[width=0.22\linewidth]{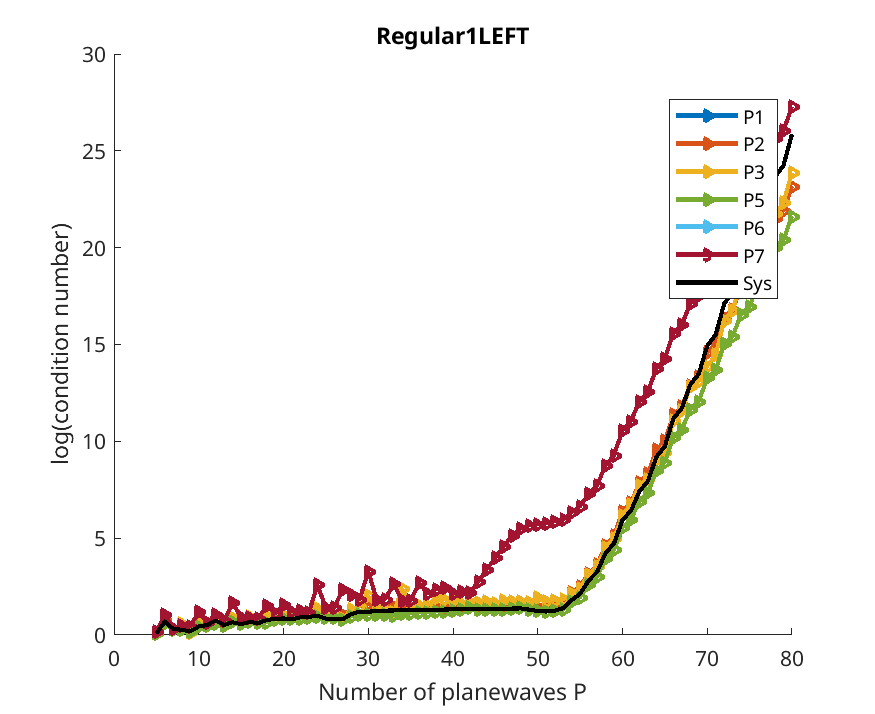}
	
	\includegraphics[width=0.22\linewidth]{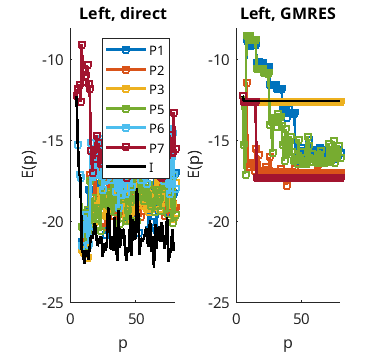}
	\includegraphics[width=0.22\linewidth]{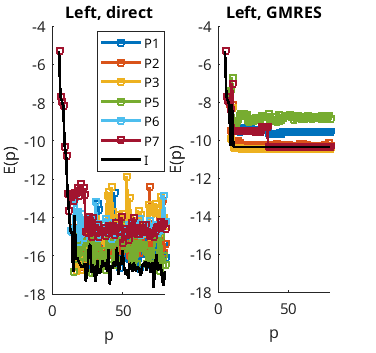}
	\includegraphics[width=0.22\linewidth]{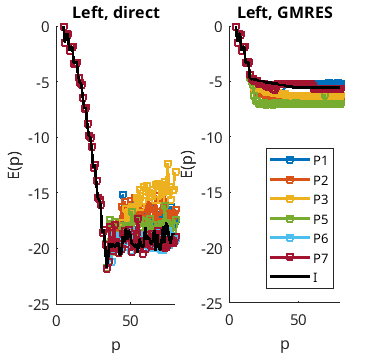}
	\includegraphics[width=0.22\linewidth]{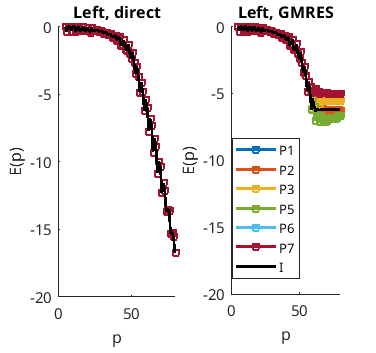}
	
	\includegraphics[width=0.22\linewidth]{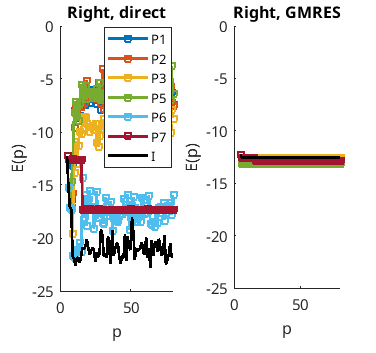}
	\includegraphics[width=0.22\linewidth]{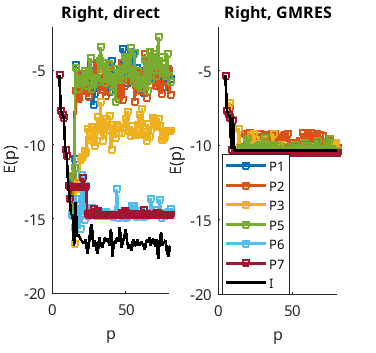}
	\includegraphics[width=0.22\linewidth]{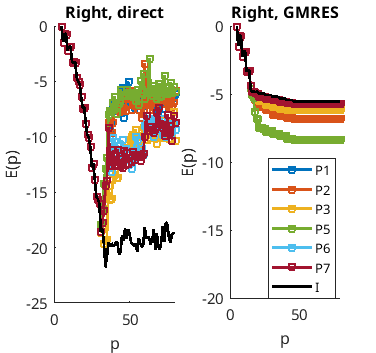}
	\includegraphics[width=0.22\linewidth]{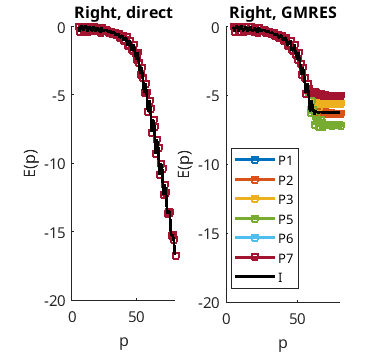}
	\caption{Behaviour of left and right  preconditioners on an equilateral triangle. Left to right: ($h=0.1, \kappa = 0.2\pi$),  ($h=1,\kappa=0.2\pi)$. ($h=1,\kappa=2\pi$), ($h=1,\kappa = 10 \pi$). Top row: condition numbers. Middle row: Errors $E(p)$ using left preconditioning. Bottom row: Right preconditioning. We have set $\delta=1e-10$ and $tol=1e-6$ in each of these.}
	\label{fig:leftcondhsmall}
\end{figure}

\subsection{Impact of regularization and GMRES tolerance parameters}

There are two tolerances we pick: $\delta$ for the preconditioners $\P_6,\P_7$ and the GMRES tolerance. To study their impact, we consider the experiment of point-wave scattering from an equilateral triangle with vertices on a circle of radius $h=0.1$.

We first explore whether the choice of $\delta$ in the regularization-based preconditioners $\P_6,\P_7$ matters. We set the wavenumber to be $\kappa = 0.2\pi$ and the GMRES tolerance to be $tol=1e-6$. The conditioning of the problem is known to be poor. As we can see from Figure \ref{fig:comparingconditionersleftdelta}, as $\delta$ decreases, the accuracy of the computed solution increases. We also see the 'singular' preconditioner $\P_7$ leads to better accuracy.

\begin{figure}
	\centering
	\includegraphics[width=0.3\linewidth]{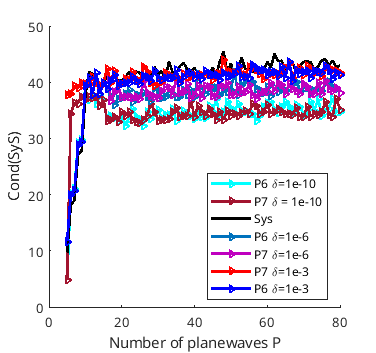}
	\includegraphics[width=0.3\linewidth]{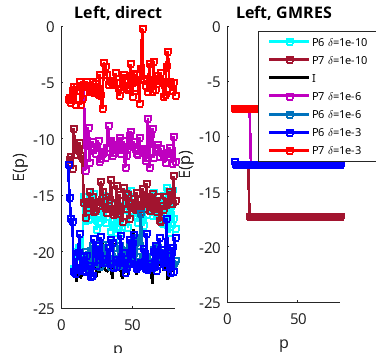}
	\includegraphics[width=0.3\linewidth]{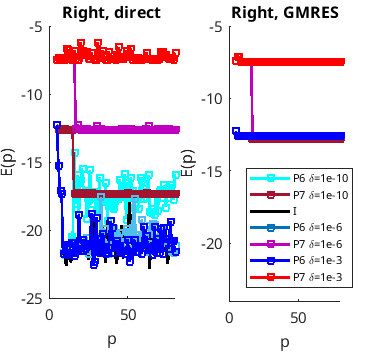}
	\caption{Impact of the regularization parameter $\delta$. Left: the condition number of the (left) preconditioned systems matrix using $\P_6$ and $\P_7$, using $\delta = 1e-3, 1e-6$ and $\delta=1e-10$ respectively. Middle: $L^2$ errors while left preconditioning. Right: $L^2$ errors with right preconditioning.  Point-wave scattering from an equilateral triangle, $h=0.1, \kappa=0.2\pi$. We have used GMRES with default tolerance ${\tt tol=}1e-6$ and restart after 5 iterations. }
	\label{fig:comparingconditionersleftdelta}
\end{figure}
We now study the impact of changing the GMRES tolerance, while keeping the regularization parameter $\delta=1e-10$ fixed. This is documented in Figure \ref{fig:triangletoll2errorsleft}. The only impact appears to be on right preconditioning, and that too in going from $tol=1e-4$ to $tol=1e-6$. We also see from this example that left preconditioning by $\P_5$ based on the 'best' circulant preconditioner provides good error reduction.

\begin{figure}
	\centering
	\includegraphics[width=0.3\linewidth]{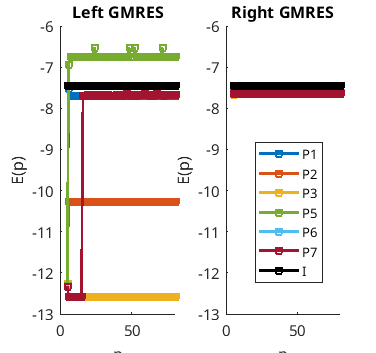}
	\includegraphics[width=0.3\linewidth]{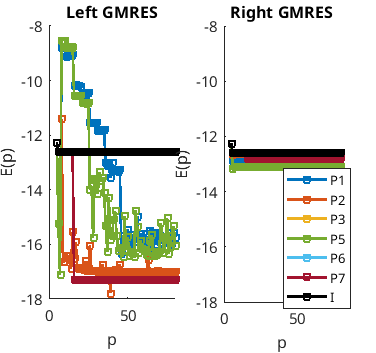}
	\includegraphics[width=0.3\linewidth]{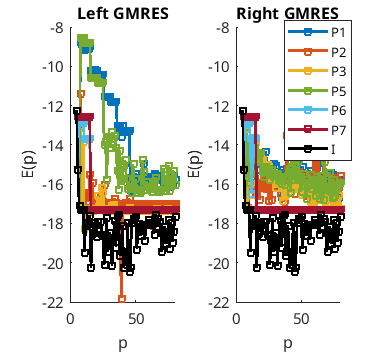}
	\caption{Impact of the GMRES tolerance on left and right preconditioning. Left: $L^2$ errors  using $tol = 1e-4$. Middle:  Using  $tol=1e-6$. Right: $L^2$ errors using $tol=1e-14$.  Point-wave scattering from an equilateral triangle, $h=0.1, \kappa=0.2\pi$. We have used $\delta=1e-10$ and restart after 5 iterations.}
	\label{fig:triangletoll2errorsleft}
\end{figure}

\subsection{Effect of element shape}
In this subsection we attempt to address a question posed by Prof. P. Monk \cite{peter}: how should one select plane-wave basis functions in a 'skinny' triangle?

We study this question for point-source scattering where the point source is located at $(2,-4)$. The element $K$ is an isosceles triangle with short side of length $2 \sin(\frac{\pi}{25})$, and the apex is located at distance $1+\cos(\frac{\pi}{25})$ from this side. The plane waves are 'centered' at $x_{K,\ell}$, where this point is chosen as the centroid of the triangle, a vertex of the short side, or the apex. The point $x_{K,\ell}=(0,0)$ in \eqref{eq:phi} and the plane wave directions are chosen as shown. The relative $L^2$-errors using the various preconditioners are shown in Figure \ref{fig:fan}.  For this experiment we see that selecting uniform-in-angle plane waves centered at the centroid is the best strategy,  and left preconditioners $\P_2, \P_5$ or $\P_7$ work well with GMRES. Choosing regularly-spaced planewave directions between 0 and $2\pi$, centered at the apex leads to somewhat improved accuracy (not shown), but is still not comparable to centering the PWB at the centroid.

\begin{figure}
	\centering
	\includegraphics[width=0.3\linewidth]{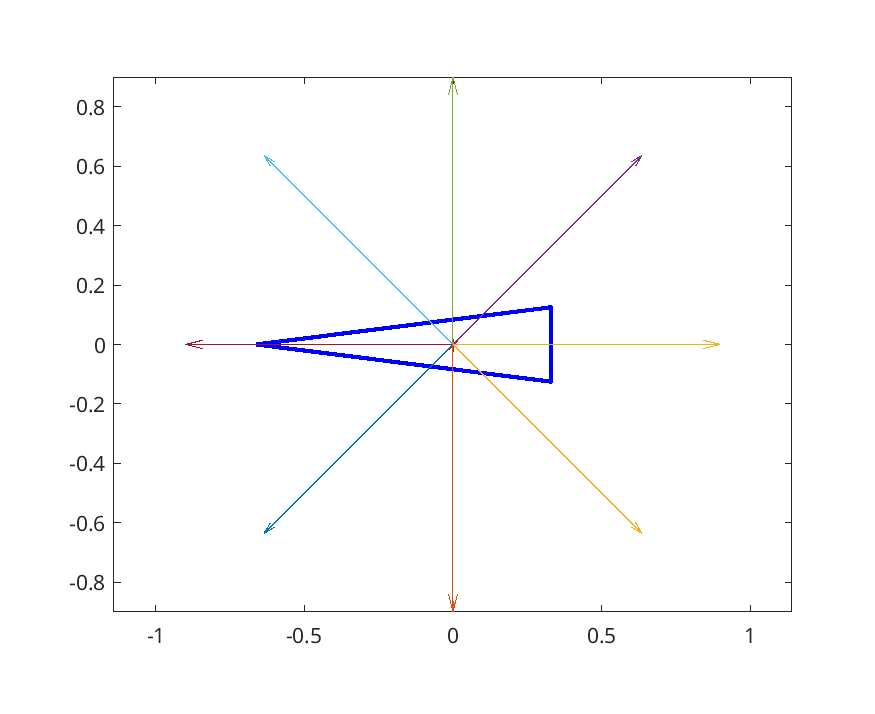}
	\includegraphics[width=0.3\linewidth]{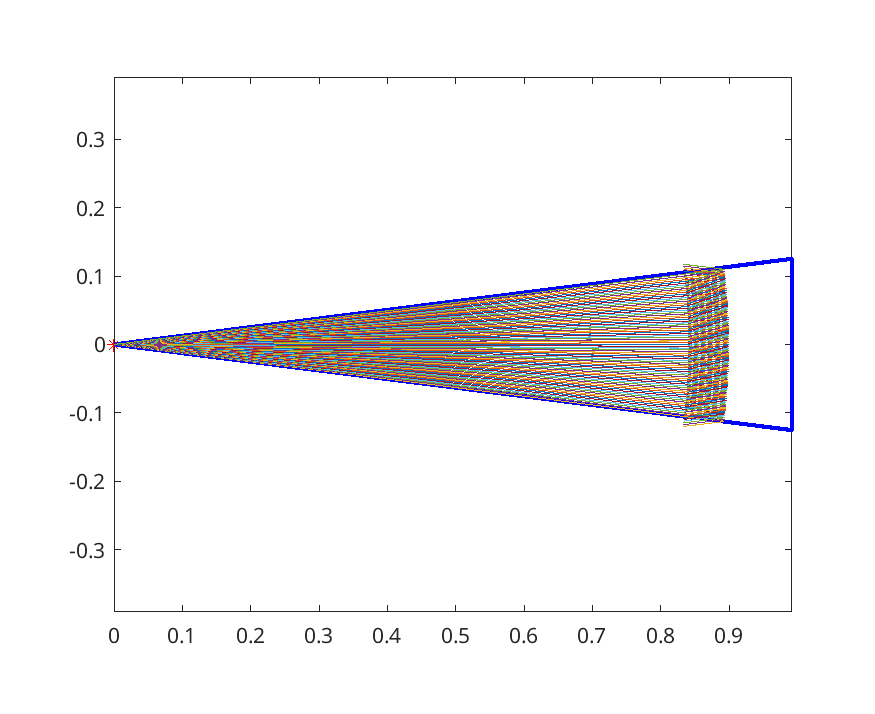}
	\includegraphics[width=0.3\linewidth]{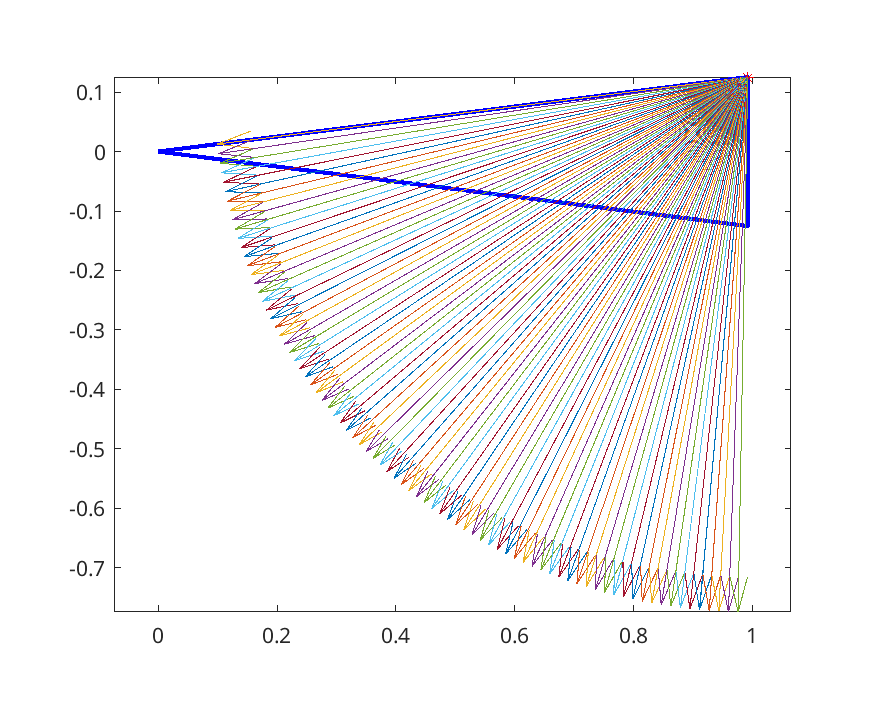}

	\includegraphics[width=0.3\linewidth]{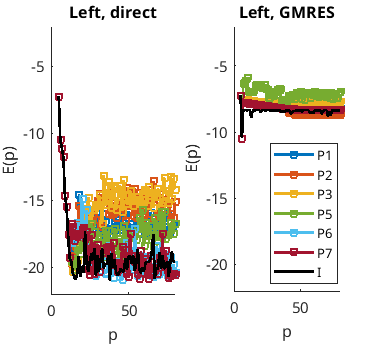}
	\includegraphics[width=0.3\linewidth]{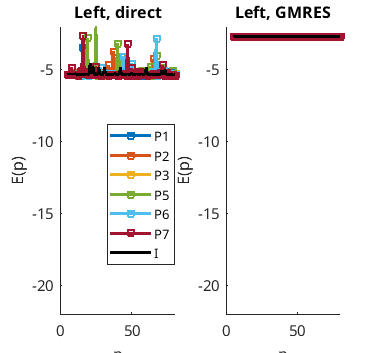}
	\includegraphics[width=0.3\linewidth]{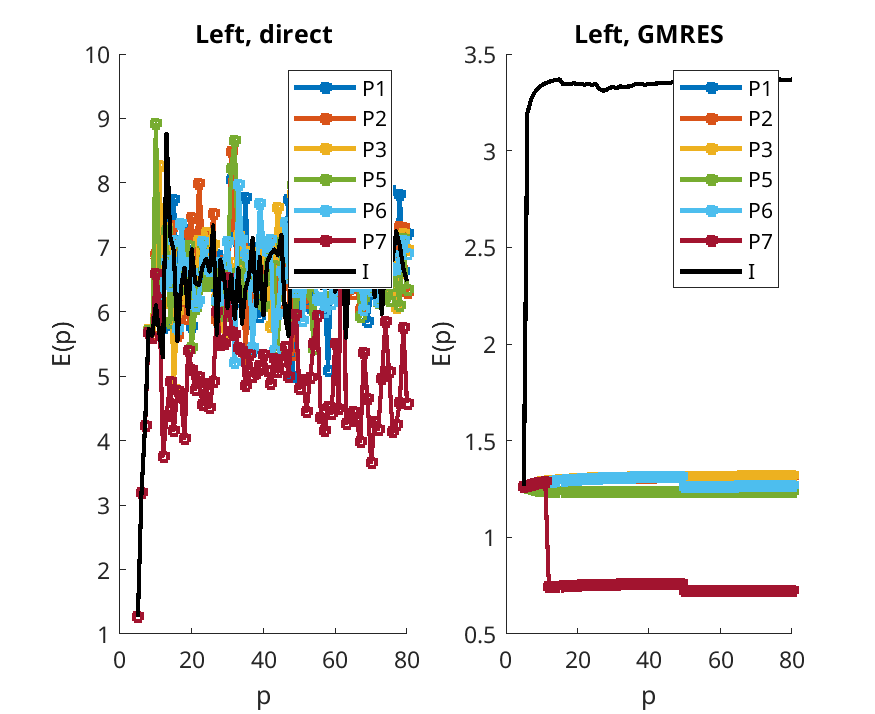}

	\includegraphics[width=0.3\linewidth]{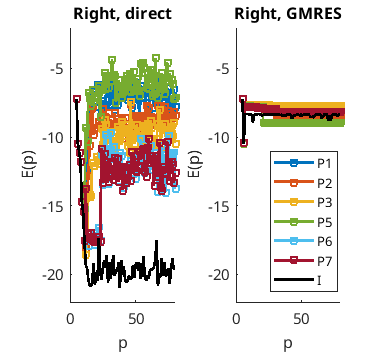}
	\includegraphics[width=0.3\linewidth]{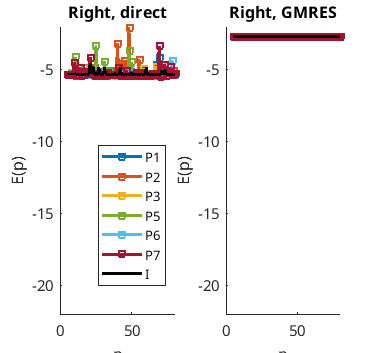}
	\includegraphics[width=0.3\linewidth]{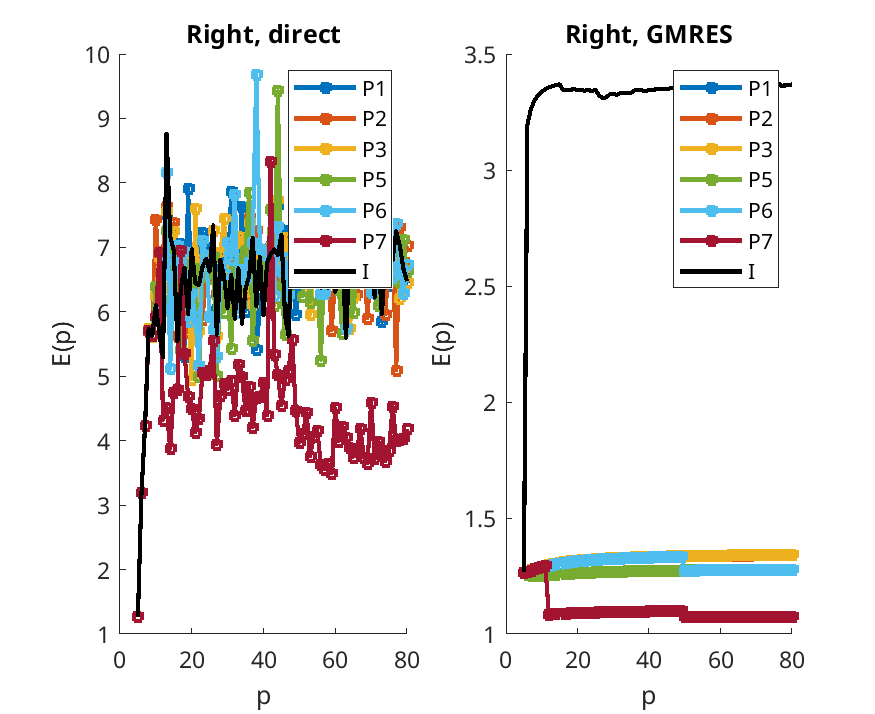}

	\caption{Where should we choose the center of the plane wave basis in skinny elements? Here, $h=1, \kappa = 0.2\pi$, and this is point-source scattering, source at (2,-4). Top row: configuration of plane waves. Second row: $E(P) v/s P$ using left preconditioners.  Third row: Left-right preconditioners. Final row: Right preconditioners. The GMRES tolerance used is $tol=1e-6$ and $\delta=1e-10$. }
	\label{fig:fan}	
\end{figure}
We end this section with a comparison of preconditioner performance on elements of different shapes in Figure \ref{fig:equilateralh1kappa01fan}. We expect the circulant preconditioners to perform well on regular polygons, for which the mass and stiffness matrices are close to Toeplitz. Interestingly, the preconditioners also improve accuracy on a triangular element with poor aspect ratio.

\begin{figure}
	\centering
	\includegraphics[width=0.22\linewidth]{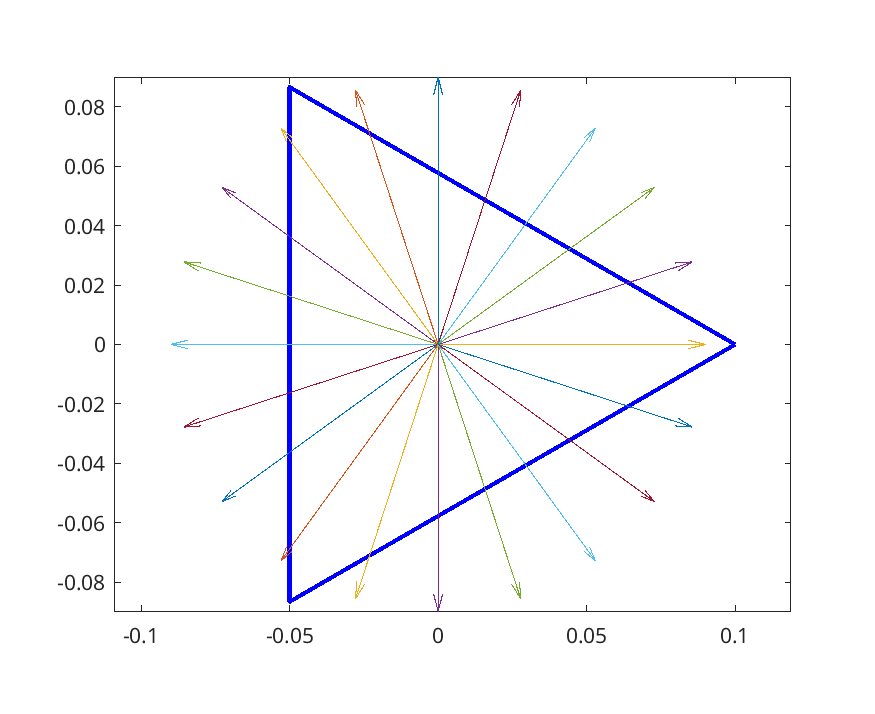}
	\includegraphics[width=0.22\linewidth]{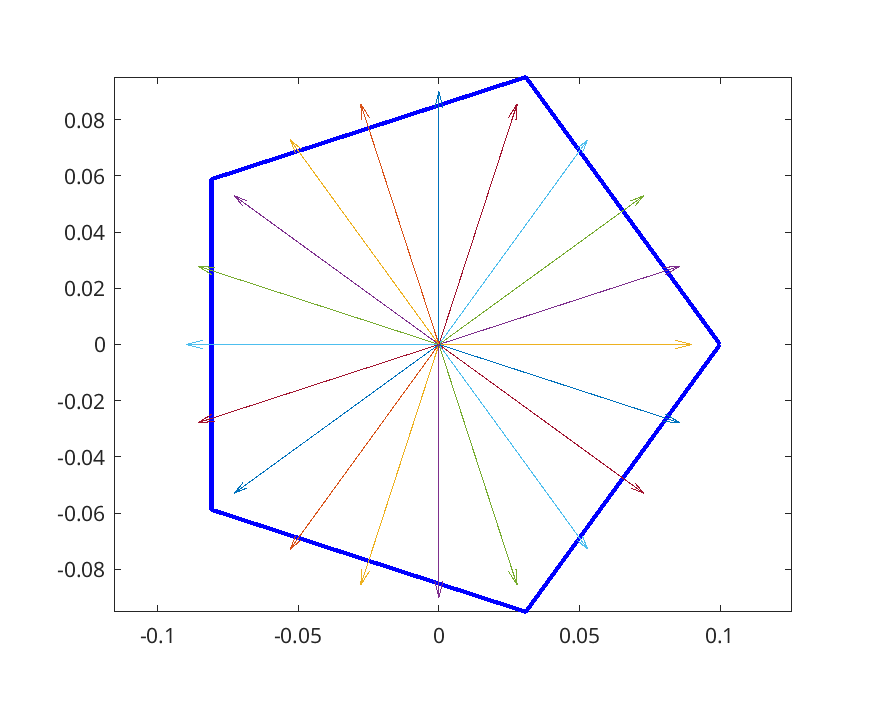}
	\includegraphics[width=0.22\linewidth]{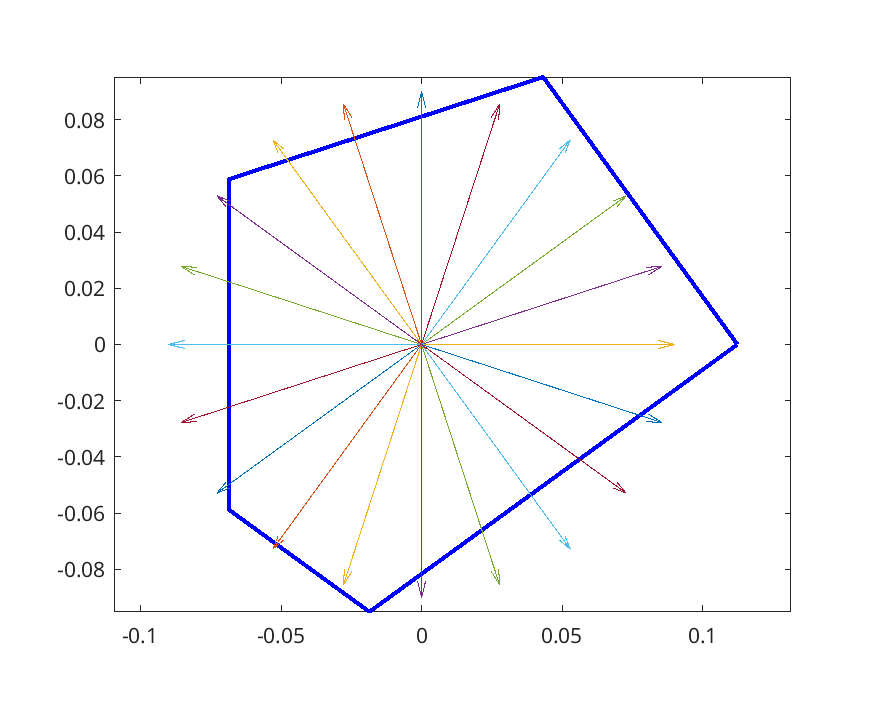}
	\includegraphics[width=0.22\linewidth]{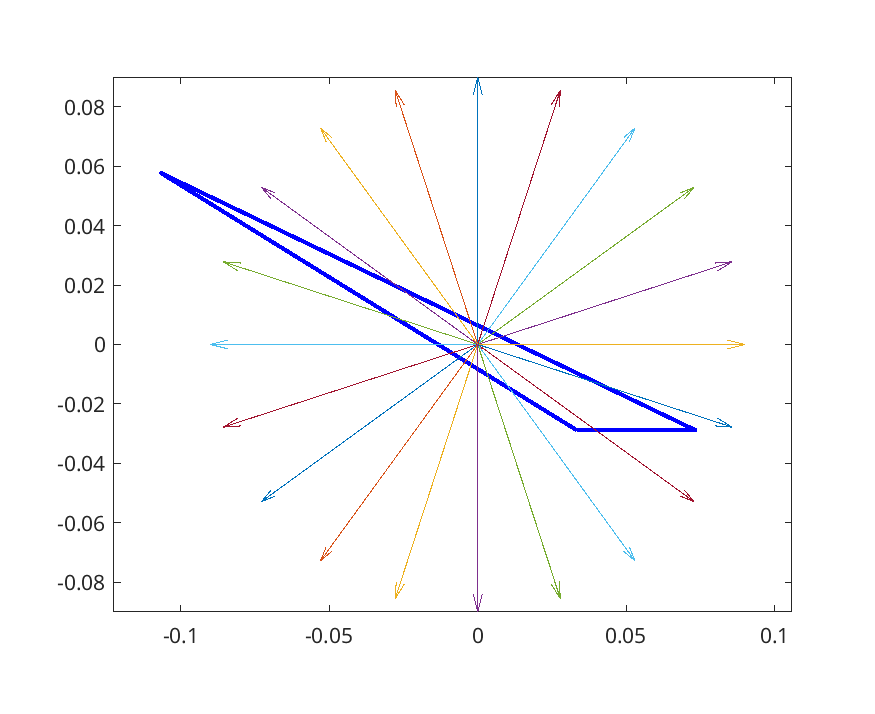}	
	
	\includegraphics[width=0.22\linewidth]{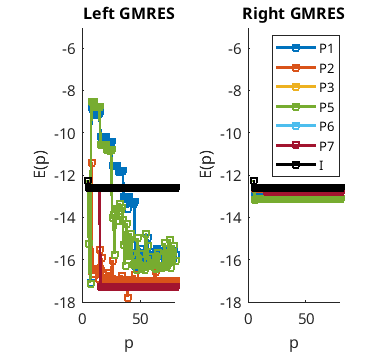}
	\includegraphics[width=0.22\linewidth]{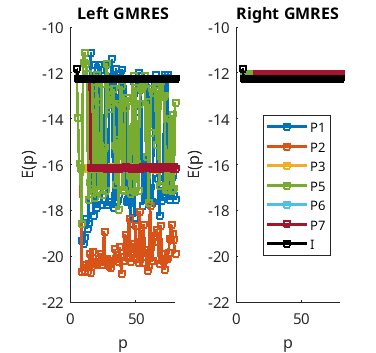}
	\includegraphics[width=0.22\linewidth]{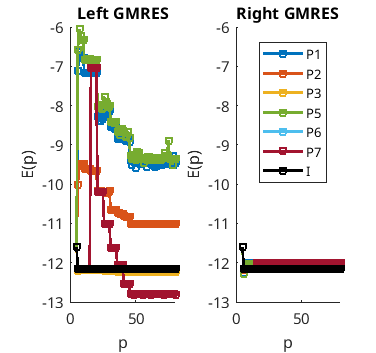}
	\includegraphics[width=0.22\linewidth]{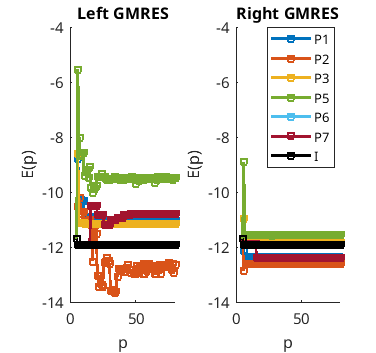}		
	\caption{Point source scattering from (left to right) equilateral triangle, regular pentagon, cyclic pentagon and obtuse triangle. In each case, $h=0.1, \kappa=0.2\pi$, GMRES tolerance =1e-6 and $\delta=1e-10$. (Note the scales on the y axes vary in these figures) }
	\label{fig:equilateralh1kappa01fan}
\end{figure}

\subsection{Discussion}

Based on the preceding analysis and examples, we summarize:

\begin{itemize}
	\item A preconditioner which reduces the Trefftz system condition number may {\it not} yield  better results in terms of the computed pde solution. This is because the preconditioners may themselves be very poorly conditioned: if $\P\M \approx \mathtt{I}$ then the conditioning of matrix-vector multiplication $\P\tt{f}$ will be poor. For instance, in Figure \ref{fig:PreconditionerComparison} preconditioner $\P_4$ produces very well-conditioned systems matrices. However, in Figure \ref{fig:l2errorspentagon1-5} we see that its performance both as part of a direct solver and for GMRES is not as good as others.
	\item For element which are close to cyclic polygons, using evenly-spaced plane-wave directions with center at the element centroid yields accurate results.
	\item For physical elements $K$ with an extreme aspect ratio, using evenly-spaced plane-wave directions with center at the element centroid provided the best results. 	
	
	\item Amongst the preconditioners considered, in most experiments  $P_{2}:=\U (\Lambda({\mathtt{SyS}^{disk}}))^{-1/2}\U^* $, the 'best circulant preconditioner' $\P_5:=(\kappa^2\C^K_{best})^{-1/2}$ and  the singular preconditioner $\P_7$ improved GMRES accuracy. 
	\item The preconditioner $\P_5$ is Hermitian, at least for equally-spaced planewaves. The singular preconditioner $\P_7$ is also Hermitian. Neither, therefore, introduce any artificial damping in the system.
	\item Left preconditioning as part of a GMRES solve yields more accurate results compared to right preconditioning.
	\item In most examples considered, \MATLAB's direct solver (backslash) on the unpreconditioned system is optimal. The benefits of preconditioning are seen with the iterative solver.

\end{itemize}
These observations lead us to conjecture that using the circulant matrices $\P_2, \P_5,$ or the singular matrix $\P_7$ \--- all easy to build and efficient to apply\--- as part of a left block-preconditioning strategy for iterative systems could improve accuracy for plane-wave Trefftz methods on meshes. We explore this in forthcoming work. 

\section{Conclusions}
In this paper, we investigate the behaviour of the plane-wave basis on a {\it single} element. We provide computational evidence that as the number of plane waves increases, the system matrix becomes nearly Toeplitz. The system matrix is exactly circulant for a disk, on which the spectrum can be characterized easily. Prompted by these observations, we propose several circulant preconditioners which are easy to build and apply. These preconditioners can reduce the condition numbers of the system matrix, and some improve the $L^2$-errors in the computed solution as part of a direct strategy or GMRES.

Amongst the preconditioners considered, in most experiments  \\
$P_{2}:=\U (\Lambda({\mathtt{SyS}^{disk}}))^{-1/2}\U^* $, the `best circulant preconditioner' $\P_5:=(\kappa^2\C^K_{best})^{-1/2}$ and  the singular preconditioner $\P_7$ improved GMRES accuracy when used as left preconditioners. In forthcoming work, we explore the performance of these preconditioners as part of a block preconditioning strategy on a mesh. 

As a final comment, we observe that the goal of reducing the condition number would lead to {\it different} recommendations than the goal of reducing the $L^2$-error.

\section*{Acknowledgements} We gratefully acknowledge  many helpful discussions with H\'el\`ene Barucq, Ilaria Perugia and Peter Monk which have influenced this work.

\bibliographystyle{plain}

\bibliography{CoyleNigam}

\clearpage

\end{document}